\def\r{\rightarrow}
\newcommand{\fdem}{\hspace*{\fill}~$\Box$\par\endtrivlist\unskip}
\newcommand{\N}{\mathbb{N}}     
\newcommand{\R}{\mathbb{R}}     
\newcommand{\C}{\mathbb{C}}
\newcommand{\X}{\mathbb{X}}
\renewcommand{\dim}{\mathop{\rm dim}}
\renewcommand{\ker}{\mathop{\rm Ker}}
\renewcommand{\r}{\mathop{\rightarrow}}
\newcommand{\cB}{\mbox{$\cal B$}}
\newcommand{\cD}{{\cal D}}
\newcommand{\cE}{\mbox{$\cal E$}}
\newcommand{\cL}{\mbox{$\cal L$}}
\newcommand{\cX}{\mbox{$\cal X$}}
\newcommand{\cV}{{\cal V}}
\newcommand{\Pc}{\widehat{P}}
\newcommand{\gc}{\widehat{\gamma}}
\newenvironment{proof}[1]{\textit{Proof#1.\,}}{\fdem}
\newtheorem{atheo}{Theorem}[section]
\newtheorem{adefi}[atheo]{Definition}
\newtheorem{alem}[atheo]{Lemma}
\newtheorem{arem}[atheo]{Remark}
\newtheorem{apro}[atheo]{Proposition}
\title{Approximating Markov chains and $V$-geometric ergodicity via weak perturbation theory}
\author{Lo\"ic HERV\'E and James LEDOUX \footnote{INSA de Rennes, F-35708, France \& IRMAR CNRS-UMR 6625, Rennes, F-35000, France; Universit\'e Europ\'eenne de Bretagne, France.
 Lo\"ic.Herve@insa-rennes.fr, James.Ledoux@insa-rennes.fr}
}
\begin{document}

\maketitle

\begin{abstract}
Let $P$ be a Markov kernel on a measurable space $\X$ and 
let $V:\X\r[1,+\infty)$.
This paper provides explicit connections between the $V$-geometric 
ergodicity of $P$ and that of finite-rank nonnegative sub-Markov kernels $\Pc_k$ approximating $P$. A special attention is paid to obtain an efficient way to specify the convergence rate for $P$ from that of $\Pc_k$ and conversely.  
Furthermore, explicit bounds are obtained for the total variation distance between 
the $P$-invariant probability measure and the $\Pc_k$-invariant positive measure. 
The proofs are based on the Keller-Liverani perturbation theorem which 
requires an accurate control of the essential spectral radius of $P$ on usual weighted supremum spaces. Such computable bounds are derived in terms of standard drift conditions. Our spectral procedure to estimate both the convergence rate and the invariant probability measure of $P$ is applied to truncation of discrete Markov kernels on $\X:=\N$.

\end{abstract}

\begin{center}

AMS subject classification : 60J10; 47B07

Keywords : rate of convergence, essential spectral radius, drift condition, quasi-compactness, truncation of discrete kernels. 
\end{center}

\section{Introduction} \label{intro}
Throughout the paper $P$ is a Markov kernel on a measurable space $(\X,\cX)$, and $\{\Pc_k\}_{k\geq1}$ is a sequence of nonnegative sub-Markov kernels on $(\X,\cX)$. 
For any positive measure $\mu$ on $\X$ and any $\mu$-integrable function $f : \X\r\C$, $\mu(f)$ denotes the integral $\int fd\mu$. Let $V : \X\r[1,+\infty)$ be a measurable function such that $V(x_0)=1$ for some $x_0\in\X$. Let $(\cB_1,\|\cdot\|_1)$ denote the weighted-supremum Banach space 
$$\cB_1 := \big\{ \ f : \X\r\C, \text{ measurable }: \|f\|_1  := \sup_{x\in\X} |f(x)| V(x)^{-1} < \infty\ \big\}.$$ 
In the sequel, $PV/V$ and each $\Pc_k V/V$ are assumed to be bounded on $\X$, so that both $P$ and $\Pc_k$ are bounded linear operators on $\cB_1$. Moreover $P$ (resp.~every $\Pc_k$) is assumed to have an invariant probability measure $\pi$ (resp.~an invariant bounded positive measure $\widehat\pi_k$) on $(\X,\cX)$ such that $\pi(V)< \infty$ (resp. $\widehat\pi_k(V)< \infty$). We suppose that there exists a bounded measurable function $\phi_k:\X\r[0,+\infty)$ such that $\Pc_k\phi_k = \phi_k$ and $\widehat\pi_k(\phi_k) = 1$. 
Throughout the paper, every $\Pc_k$ is of finite rank on $\cB_1$ and the sequence $\{\Pc_k\}_{k\ge 1}$  converges to $P$ in the weak sense (\ref{C01}) below. Finally we assume that 
\begin{equation} \label{cond-phi-k}
\lim_{k\r+\infty} \pi(\phi_k) = 1  \quad \text{and} \quad\lim_{k\r+\infty} \widehat\pi_k(1_\X) = 1 . 
\end{equation}
These two conditions are necessary for the sequence $\{\widehat\pi_k\}_{k\ge 1}$ to converge to $\pi$ in total variation distance since $\pi(\phi_k) - 1 = (\pi - \widehat\pi_k)(\phi_k)$ and 
$\widehat\pi_k(1_\X) - 1 = (\widehat\pi_k-\pi)(1_\X)$. 

When $P$ is a Markov kernel on $\X:=\N$, a typical instance of sequence $\{\Pc_k\}_{k\ge 1}$  is obtained by considering the extended sub-Markov kernel  $\Pc_k$ derived from the linear augmentation (in the last column) of the $(k+1)\times (k+1)$ northwest corner truncation $P_k$ of $P$ (e.g. see \cite{Twe98}). Then $\widehat\pi_k$ is the (extended) probability measure on $\N$ derived from the $P_k$-invariant probability measure and $\phi_k = 1_{B_k}$ with $B_k := \{0,\ldots,k\}$. 

In this work, the connection between the $V$-geometrical ergodicity of $P$ and that of  $\Pc_k$ is investigated. These properties are defined as follows. 
\leftmargini 0em
{\it \begin{itemize}  
	\item[] \textbf{ 
$P$ (resp.~$\Pc_k$) is said to be $V$-geometrically ergodic} 
if there exist some rate $\rho\in(0,1)$ (resp.~$\rho_k\in(0,1)$) and constant $C>0$ (resp.~$C_k>0$) such that
\begin{gather} 
\forall n\geq 0,\ \sup_{f\in{\cal B}_1,\|f\|_1\leq 1} \|P^nf- \pi(f)1_\X\|_1 \leq C\, \rho^n \tag{$V$}\label{V-geo-cond} \\
respectively: \quad \forall n\geq 0,\ \sup_{f\in{\cal B}_1,\|f\|_1\leq 1}\|{\Pc_k}^{\;n} f - \widehat\pi_k(f)\phi_k\|_1 \leq C_k\, {\rho_k}^n. \qquad \qquad  \tag{$V_k$} \label{Vk-geo-cond} 
\end{gather}   
\end{itemize}}
\noindent Specifically, the two following issues are studied. 
\leftmargini 2.4em
\begin{enumerate}[(Q1)] {\it 
	\item \label{1} Suppose that $P$ is $V$-geometrically ergodic. 
\begin{enumerate}[(a)]
	\item Is $\Pc_k$ a $V$-geometrically ergodic kernel for $k$ large enough?
	\item When $(\rho,C)$ is known in \emph{(\ref{V-geo-cond})}, can we deduce explicit $(\rho_k,C_k)$ in \emph{(\ref{Vk-geo-cond})} from $(\rho,C)$?
	\item Does the total variation distance $\| \widehat\pi_k - \pi \|_{TV}$ go to $0$ when $k\r+\infty$, and can we obtain an explicit bound for   $\| \widehat\pi_k - \pi \|_{TV}$ when $(\rho,C)$ is known in~\emph{(\ref{V-geo-cond})}? 
\end{enumerate}
 \item \label{2} Suppose that $\Pc_k$ is $V$-geometrically ergodic for every $k$. 
		\begin{enumerate}[(a)] 
	\item  Is $P$ a $V$-geometrically ergodic kernel? 
	\item 	When $(\rho_k,C_k)$ is known in \emph{(\ref{Vk-geo-cond})}, can we deduce explicit $(\rho,C)$ in \emph{(\ref{V-geo-cond})} from $(\rho_k,C_k)$, and consequently obtain a bound for $\| \widehat\pi_k - \pi \|_{TV}$ using the last part of (Q\ref{1}$(c)$)? 
\end{enumerate}}
\end{enumerate}

A natural way to solve (Q\ref{1}) is to see $\Pc_k$ as a perturbed operator of $P$, and vice versa 
for (Q\ref{2}). The standard perturbation theory requires that $\{\Pc_k\}_{k\ge 1}$ converges to $P$ in operator norm on $\cB_1$. Unfortunately this condition may be very restrictive even if $\X$ is discrete (e.g.~see \cite{ShaStu00,FerHerLed11}): for instance, in our application to  truncation  of discrete kernels in  Section~\ref{sec-appli}, this condition never holds (see Remark~\ref{Annexe_Continuity}). Here we use the weak perturbation theory due to Keller and Liverani \cite{KelLiv99,Liv01} (see also \cite{Bal00})  which invokes the  weakened convergence property  
\begin{equation} \label{C01}
\|\Pc_k  - P\|_{0,1} : = \sup_{f\in{\cal B}_0,\|f\|_0\leq 1} \|\Pc_kf- Pf\|_1 \xrightarrow[k\r +\infty ]{} 0, 
\tag{$C_{0,1}$}
\end{equation}
where $\cB_0$ is the Banach space of bounded measurable $\C$-valued functions on $\X$ equipped with its usual norm $\|f\|_0:=\sup_{x\in\X}|f(x)|$. In the truncation context of Section~\ref{sec-appli} where $\X:=\N$, Condition (\ref{C01}) holds provided that $\lim_k V(k)= +\infty$. The price to pay for using~(\ref{C01}) is that two functional assumptions are needed. The first one involves the Doeblin-Fortet inequalities: such dual inequalities can be derived for $P$ and every $\Pc_k$ from 
Condition (\ref{drift-gene-cond-gene}) below. The second one requires to have an accurate bound of the essential spectral radius $r_{ess}(P)$ of $P$ acting on $\cB_1$. 

Issues (Q\ref{1}) and (Q\ref{2}) are solved in Sections~\ref{sec-V-to-Vk} and \ref{sec-Vk-to-V}. The question in (Q\ref{1})-(Q\ref{2}) concerning $\| \widehat\pi_k - \pi \|_{TV}$ is solved by  arguments inspired by \cite[Lem.~7.1]{Liv01} (see Proposition~\ref{pro-P-hatP-en3fois}).  The other questions in both (Q\ref{1})-(Q\ref{2}) are addressed using \cite{Liv01}. Theorems~\ref{pro-bv} and \ref{cor-Vk-to-V} provide a positive and explicit answer to the issues (Q\ref{1}) and (Q\ref{2}) under Conditions~(\ref{V-geo-cond}), (\ref{Vk-geo-cond}), (\ref{C01}) and the following uniform weak drift condition: 
\begin{equation} \label{drift-gene-cond-gene} 
\exists\, \delta\in(0,1),\ \exists L>0,\ \forall k\in\N^*\cup\{\infty\}, \quad  \Pc_kV \leq \delta V + L\, 1_{\X}  \tag{\text{UWD}}
\end{equation}
where by convention $\Pc_\infty := P$. The notion of essential spectral radius and the material on quasi-compactness used for bounding $r_{ess}(P)$ are reported in Section~\ref{sec-mino}. Our results are applied in Section~\ref{sec-appli} to truncation of discrete Markov kernels. Some useful theoretical complements on the weak perturbation theory are postponed to  Section~\ref{WP_truncation}. 

The estimates of Theorems~\ref{pro-bv} and \ref{cor-Vk-to-V} are all the more precise that the real number $\delta$ in (\ref{drift-gene-cond-gene}) and the bound for $r_{ess}(P)$ 
are accurate. The weak drift inequality used in (\ref{drift-gene-cond-gene}) is a simple and well-known condition introduced in  \cite{MeyTwe93} for studying $V$-geometric ergodicity.  
Managing $r_{ess}(P)$ is more delicate, even in discrete case. 
To that effect, it is important not to confuse $r_{ess}(P)$ with $\rho$ in (\ref{V-geo-cond}). 
Actually Inequality~(\ref{V-geo-cond}) gives\footnote{
(See e.g.~\cite{KonMey03} or apply Definition~\ref{def-q-c} with $H:=\{f\in\cB_1 : \pi(f)=0\}$.)}
$r_{ess}(P)\leq \rho$, but this bound may be very inaccurate since 
 there exist Markov kernels $P$ such that $\rho-r_{ess}(P)$ is close to one (see Definition~\ref{def-q-c}). Moreover note that no precise rate $\rho$ in (\ref{V-geo-cond}) is known a priori in Issue (Q\ref{2}). Accordingly, our study requires to obtain a specific control of the essential spectral radius $r_{ess}(P)$ of a general Markov kernel $P$ acting on $\cB_1$. This study,  which has its own interest, is presented in Section~\ref{sec-mino} where the two following results are presented. 
\leftmargini 3.8em
\begin{enumerate}[(i)] {\it 
	\item If $P$ satisfies  some classical drift/minorization conditions, then $r_{ess}(P)$ can be bounded in terms of the constants of these drift/minorization conditions (see Theorem~\ref{main}). 
	\item If $PV \leq \delta V + L\, 1_{\X}$ for some $\delta\in(0,1)$ and $L>0$, and if $P : \cB_0\r\cB_1$ is compact, then 
	$r_{ess}(P)\leq \delta$ (see Proposition~\ref{pro-qc-bis}).}
\end{enumerate}
To the best of our knowledge, the general result~$(i)$ is not known in the literature. Result~$(ii)$ is a simplified version of \cite[Th.~3.11]{Wu04}. For convenience  we present a direct and short proof using \cite[Cor.~1]{Hen93}. The bound $r_{ess}(P) \leq \delta$ obtained in $(ii)$ is more precise than that obtained in $(i)$ (and sometimes is optimal). The compactness property in $(ii)$ must not be confused with that of $P:\cB_1\r\cB_1$ or $P:\cB_0\r\cB_0$, which are much stronger conditions. If $P$ is an infinite matrix ($\X:=\N$), then $P:\cB_0\r\cB_1$ is compact when $\lim_{k}V(k)=+\infty$, while in general $P$ is compact neither on $\cB_1$ nor on $\cB_0$.  

Let us give a brief review of previous works related to Issues (Q\ref{1}) or (Q\ref{2}). 
Various probabilistic methods  have been developed to derive explicit rate and constant $(\rho,C$) in Inequality~(\ref{V-geo-cond}) from the constants of drift conditions (see \cite{MeyTwe94,LunTwe96,Bax05} and the references therein). To the best of our knowledge, these methods, which are not concerned with approximation issues, provide a computable rate $\rho$ which is often unsatisfactory, except for reversible or stochastically monotone~$P$. Convergence of $\{\widehat\pi_k\}_{k\geq1}$ to $\pi$ has been studied in \cite{Twe98} for truncation approximations of $V$-geometrically ergodic Markov kernels with discrete $\X$. 
Specifically it is proved that 
$\sup_{|f|\leq V}|\widehat\pi_k(f) - \pi(f)|$ goes to $0$ when $k\r+\infty$. In particular $\|\widehat\pi_k - \pi\|_{TV}$ goes to $0$. In the special case when $P$ is stochastically monotone,  
the following rate of convergence is obtained \cite[Th. 4.2,(46)]{Twe98} 
\begin{equation} \label{ineg-Tweedie} 
 \forall k\in\N^*,\quad \|\widehat\pi_k - \pi\|_{TV} 
  = O\left(\frac{\ln V(k)}{V(k)}\right).
\end{equation}
In this estimate, explicit constants only depending on constants involved in drift/minorization conditions are also provided. A similar result has been obtained for polynomially ergodic Markov chains in \cite{Liu10}. A bound of type (\ref{ineg-Tweedie}) is provided by Theorems~\ref{pro-bv} and \ref{cor-Vk-to-V} under Conditions~(\ref{V-geo-cond}), (\ref{Vk-geo-cond}),  (\ref{C01}) and (\ref{drift-gene-cond-gene}). This result is new even for discrete truncation approximation since the stochastic monotonicity is not required in our work.  
Questions related to Issue~(Q\ref{2}) have been investigated in \cite{Ros96} for discrete reversible Markov kernels. The main result in \cite[Cor.~3]{Ros96} is that $\|\mu P^n - \pi\|_{TV} \leq c\, \beta^n$ for some explicit positive constant $c$,  provided that $\liminf\beta_k \leq \beta$, where $\beta_k$ denotes the so-called second eigenvalue of the $k$-th truncated kernels augmented on the diagonal. However, except in special cases (see \cite{Ros96}), finding a nontrivial upper  bound $\beta$ of $\liminf\beta_k$ is a difficult problem. Note that our Theorem~\ref{cor-Vk-to-V} only needs to compute the second eigenvalue of $\Pc_k$ for some suitable  $k$.  Finally mention that perturbations bounds have been obtained in \cite{Mit05} by a different way for uniformly ergodic Markov chains (i.e. $V\equiv 1$ in (\ref{V-geo-cond})).

The weak perturbation results of \cite{KelLiv99,Liv01} have been fully used in the framework of dynamical systems (e.g.~see \cite{Bah06,DemLiv08, BahBos10, BahBos11, Zhe10}). There, Markov kernels and their invariant probability measure are replaced by Perron-Frobenius operators and their so-called SRB measure. In the context of  $V$-geometrically ergodic Markov kernels, the Keller-Liverani theorem has been already used in \cite{FerHerLed11} to study general perturbation issues. 
When applied to our context, \cite[Th.~1]{FerHerLed11} gives a positive answer to (Q\ref{1})$(a)$ and the convergence of $\| \widehat\pi_k - \pi \|_{TV}$ to $0$.  
The others questions in (Q\ref{1})-(Q\ref{2}) are not addressed in \cite{FerHerLed11}. In the present work, the duality arguments introduced in \cite{FerHerLed11} are applied together with the explicit bounds of \cite{Liv01}. 

\section{Notations and preliminary results} \label{sect-tronc-tres-gene}

Let us first introduce notations  used in the paper. Let $\lfloor \cdot  \rfloor$ denote the integer part function. 
We denote by $(\cL(\cB_0,\cB_1),\|\cdot\|_{0,1})$ the space of all the bounded linear maps from $\cB_0$ to $\cB_1$, equipped with its usual norm: 
$$\|T\|_{0,1} = \sup\big\{\|Tf\|_1,\, f\in\cB_0,\, \|f\|_0 \leq 1\big\}.$$ 
We write $\cL(\cB_1)$ for $\cL(\cB_1,\cB_1)$ and $\|T\|_1$ for $\|T\|_{1,1}$. Let $(\cB_1',\|\cdot\|_1)$ be the dual space of $\cB_1$. Note that we make a slight abuse of notation in writing again $\|\cdot\|_1$ for the operator and dual norms on $\cB_1$. Recall that the total variation distance between $\widehat\pi_k$ and $\pi$ is defined by 
\begin{equation*} 
\| \widehat\pi_k - \pi \|_{TV} := \sup_{\|f\|_0\leq1} |\widehat\pi_k(f) - \pi(f)| .
\end{equation*}

The next proposition is relevant to estimate $\| \widehat\pi_k - \pi \|_{TV}$. 
\begin{apro} \label{pro-P-hatP-en3fois}
Assume that Condition~\emph{(\ref{drift-gene-cond-gene})} holds. Set $\Delta_k := \|\Pc_k  - P\|_{0,1}$ and 
\begin{equation} \label{As-bounded}
A := 1 +\frac{L}{1-\delta}.
\end{equation}
\begin{enumerate}[(a)]
	\item If, for some $k\geq1$, $\Pc_k$ is $V$-geometrically ergodic with rate and constant $(\rho_k,C_k)$ in \emph{(\ref{Vk-geo-cond})}, then
\begin{equation} \label{rem-spec-n-k-bis}
\| \widehat\pi_k - \pi \|_{TV}\  \leq \  \widehat\pi_k(1_\X)\, |1-\pi(\phi_k)| + \frac{L}{1-\delta}\bigg(\frac{2C_k}{\rho_k} + \frac{A}{\ln({\rho_k}^{\, -1})}\, |\ln \Delta_k|\bigg)\Delta_k  .
\end{equation}
   \item If $P$ is $V$-geometrically ergodic with rate and constant $(\rho,C)$ in \emph{(\ref{V-geo-cond})}, 
then 
\begin{equation} \label{rem-spec-n-k-ter}
\forall n\geq 1,\quad \| \widehat\pi_n - \pi \|_{TV}\  \leq\  |1-\widehat\pi_n(1_\X)| + \frac{L}{1-\delta}\bigg(\frac{2C}{\rho} + \frac{A}{\ln {\rho}^{-1}}\, |\ln \Delta_n|\bigg)\Delta_n. 
\end{equation}
\end{enumerate}
\end{apro}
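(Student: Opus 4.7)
My plan is to exploit the invariances $\pi P=\pi$ and $\widehat\pi_k\Pc_k=\widehat\pi_k$ to replace $f$ by iterates $P^nf$ and $\Pc_k^nf$, and then to split the resulting discrepancy into a semigroup-drift piece and a spectral-gap piece, following the scheme of \cite[Lem.~7.1]{Liv01}. Concretely, for $f\in\cB_0$ with $\|f\|_0\le 1$ and any integer $n\ge 1$ (to be chosen later),
\[
\pi(f)-\widehat\pi_k(f)\;=\;\pi\bigl(P^nf-\Pc_k^nf\bigr)\;+\;(\pi-\widehat\pi_k)(\Pc_k^nf).
\]

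For the first summand I would use the standard telescope $P^n-\Pc_k^n=\sum_{j=0}^{n-1}P^{n-1-j}(P-\Pc_k)\Pc_k^{\,j}$; applying $\pi$, the left $P$-factors collapse via $\pi P=\pi$, and since $\Pc_k$ is sub-Markov $\|\Pc_k^{\,j}f\|_0\le 1$, so each term is bounded by $\pi(V)\Delta_k$, yielding $|\pi(P^nf-\Pc_k^nf)|\le n\,\pi(V)\,\Delta_k$. For the second summand I use (\ref{Vk-geo-cond}) to write $\Pc_k^nf=\widehat\pi_k(f)\phi_k+R_n$ with $\|R_n\|_1\le C_k\rho_k^{\,n}$; since $\widehat\pi_k(\phi_k)=1$, this contributes $\widehat\pi_k(f)\bigl(\pi(\phi_k)-1\bigr)+(\pi-\widehat\pi_k)(R_n)$, with $|(\pi-\widehat\pi_k)(R_n)|\le\bigl(\pi(V)+\widehat\pi_k(V)\bigr)C_k\rho_k^{\,n}$. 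The uniform drift (\ref{drift-gene-cond-gene}) combined with invariance gives $\pi(V)\le L/(1-\delta)$ and $\widehat\pi_k(V)\le L\widehat\pi_k(1_\X)/(1-\delta)$, which both produces the global prefactor $L/(1-\delta)$ and yields the constant $A$ upon gathering the two $V$-masses; bounding $|\widehat\pi_k(f)|\le\widehat\pi_k(1_\X)$ then accounts for the boundary term $\widehat\pi_k(1_\X)|1-\pi(\phi_k)|$ in (\ref{rem-spec-n-k-bis}).

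It then remains to choose $n$ so as to balance $n\Delta_k$ against $C_k\rho_k^{\,n}$: taking $n_k$ to be the smallest integer with $\rho_k^{\,n_k}\le\Delta_k$ gives $n_k\le|\ln\Delta_k|/\ln\rho_k^{-1}+1$ and $C_k\rho_k^{\,n_k}\le C_k\Delta_k\le(2C_k/\rho_k)\Delta_k$, so that after taking the supremum over $\|f\|_0\le 1$ one obtains exactly (\ref{rem-spec-n-k-bis}). Part (b) is proved by the mirror argument, now using (\ref{V-geo-cond}) to split $P^nf=\pi(f)\,1_\X+S_n$ with $\|S_n\|_1\le C\rho^n$ and the reversed telescope $P^n-\Pc_n^{\,n}=\sum_{j=0}^{n-1}\Pc_n^{\,n-1-j}(P-\Pc_n)P^{\,j}$, so that $\widehat\pi_n\Pc_n=\widehat\pi_n$ now collapses the left $\Pc_n$-factors; the boundary term $|1-\widehat\pi_n(1_\X)|$ arises here from the mismatch $\widehat\pi_n(\pi(f)\,1_\X)=\pi(f)\,\widehat\pi_n(1_\X)$ versus $\pi(f)$.

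The main obstacle is book-keeping rather than conceptual: extracting the precise constants $2C_k/\rho_k$ and $A/\ln\rho_k^{-1}$ from the rounding of the optimal integer $n_k$ and from the simultaneous use of the drift on $\pi$ and $\widehat\pi_k$ (in particular, controlling the factor $\widehat\pi_k(1_\X)$, which is only known to converge to $1$). Once the decomposition above is in place, everything is a transparent assembly of invariance, telescoping, and (\ref{Vk-geo-cond}) or (\ref{V-geo-cond}), with the uniform drift (\ref{drift-gene-cond-gene}) serving only to make the $V$-masses — and hence all constants — independent of $k$.
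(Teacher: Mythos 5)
Your proposal is correct and is essentially the paper's own proof: the same decomposition $\pi(f)-\widehat\pi_k(f)=\pi(P^nf-\Pc_k^{\,n}f)+(\pi-\widehat\pi_k)(\Pc_k^{\,n}f)$ obtained from the two invariances, the spectral term handled by (\ref{Vk-geo-cond}) together with the drift bounds $\pi(V),\widehat\pi_k(V)\le L/(1-\delta)$, the difference term handled by telescoping (the paper bounds $\|P^n-\Pc_k^{\,n}\|_{0,1}\le n A_n\Delta_k$ with $A_n\le A$, while your $\pi$-collapsed telescope is a harmless sharpening that avoids the factor $A$), and the same balancing choice of $n$, with part (b) by exchanging the roles of $P$ and $\Pc_k$. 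The only adjustment needed is in the rounding: take $n=\max\big(0,\lfloor \ln\Delta_k/\ln\rho_k\rfloor\big)$ as in the paper, so that $n\le|\ln\Delta_k|/\ln(\rho_k^{-1})$ with no extra $+1$ and $\rho_k^{\,n}\le\Delta_k/\rho_k$, which yields exactly the constants $2C_k/\rho_k$ and $A/\ln(\rho_k^{-1})$ in (\ref{rem-spec-n-k-bis}), whereas your choice of the smallest $n$ with $\rho_k^{\,n}\le\Delta_k$ leaves a $+1$ that need not be absorbed by the stated constants when $\Delta_k$ and $\rho_k$ are both close to $1$.
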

From Assumption~(\ref{cond-phi-k}), we know that the first term in the right-hand side of both (\ref{rem-spec-n-k-bis}) and (\ref{rem-spec-n-k-ter}) converges to $0$. When applied to truncation of discrete kernels in Section~\ref{sec-appli}, the first term in the right-hand side of (\ref{rem-spec-n-k-bis}) (resp.~of (\ref{rem-spec-n-k-ter})) is $O(\Delta_k)$ (resp.~zero).

 Inequality~(\ref{rem-spec-n-k-bis}) is interesting since rate and constant $(\rho_k,C_k)$ in (\ref{Vk-geo-cond}) are expected to be computable. However further properties on the $\rho_k$'s and the $C_k$'s are required to deduce $\lim_k\| \widehat\pi_k - \pi \|_{TV} = 0$ from (\ref{rem-spec-n-k-bis}). They are derived from (\ref{V-geo-cond}) in Theorem~\ref{pro-bv}. By contrast, Inequality~(\ref{rem-spec-n-k-ter}) gives $\lim_n \| \widehat\pi_n - \pi \|_{TV} = 0$ when $\|\Pc_n  - P\|_{0,1}\r0$. But the bound $\| \widehat\pi_n - \pi \|_{TV} = O(|\ln \Delta_n|\Delta_n)$ provided by (\ref{rem-spec-n-k-ter}) is only computable when some explicit rate and constant $(\rho,C)$ are known in (\ref{V-geo-cond}). Such explicit  $(\rho,C)$ is derived from (\ref{Vk-geo-cond}) in Theorem~\ref{cor-Vk-to-V}. 

\noindent \begin{proof}{ of Proposition~\ref{pro-P-hatP-en3fois}}
In a first step, we prove that we have with $A_n := \max_{0\leq j \leq n-1} \|P^j\|_1$
\begin{equation}
\| \widehat\pi_k - \pi \|_{TV} \leq  \widehat\pi_k(1_\X)\, |1-\pi(\phi_k)| +   \frac{L}{1-\delta}\big(2C_k\, {\rho_k}^{ n} + n A_n \Delta_k\big). \label{rem-spec-n-k-first} 
\end{equation} 
Observe that, for any $n\geq0$, we can write  
\begin{equation*} 
\|P^n - {\Pc_k}^{\ n}\|_{0,1} \leq n A_n \|P - {\Pc_k}\|_{0,1}.
\end{equation*}
This inequality follows from $\|\Pc_k\|_0\leq1$ and an easy induction based on 
$$P^{n} - {\Pc_k}^{\ n} = P^{n-1}(P - {\Pc_k}) + (P^{n-1} - {\Pc_k}^{\ n-1}){\Pc_k}.$$
Using the triangle inequality, we obtain for any $f\in\cB_0$ such that $\|f\|_0\leq1$
\begin{eqnarray*}
|\widehat\pi_k(f) - \pi(f)|\ & = & \big|\widehat\pi_k(\Pc_k^{\;n}f) - \pi(P^nf)\big| \\ 
& \leq & \big| (\widehat\pi_k-\pi)(\Pc_k^{\;n}f)\big| + \big| \pi\big(\Pc_k^{\;n}f  - P^nf\big) \big| \\ 
& \leq &  \widehat\pi_k(|f|)\big|(\widehat\pi_k-\pi)(\phi_k)\big| +  \big|(\widehat\pi_k-\pi)(\Pc_k^{\;n}f-\widehat\pi_k(f)\phi_k)\big| + \|\pi\|_1\, \|{\Pc_k}^{\ n} - P^n \|_{0,1}  \\ 
& \leq &  \widehat\pi_k(1_\X)\, |\pi(\phi_k)-1| + \|\widehat\pi_k-\pi\|_1 \, \|\Pc_k^{\;n}f-\widehat\pi_k(f)\phi_k\|_1 + \pi(V)\, n A_n \Delta_k \\
& \leq & \widehat\pi_k(1_\X)\, |\pi(\phi_k)-1| + \big(\widehat\pi_k(V) + \pi(V)\big)\, C_k\, \rho_k^{\, n}\, \|f\|_1 + \pi(V)\, n A_n \Delta_k. 
\end{eqnarray*}
Note that  
\begin{equation} \label{maj-pi-V}
\max\big(\pi(V),\widehat\pi_k(V)\big) \leq L/(1-\delta)
\end{equation} 
from~(\ref{drift-gene-cond-gene}) since $\pi$ (resp.~$\widehat\pi_k$) is $P$-invariant (resp.~$\Pc_k$-invariant). Inequality (\ref{rem-spec-n-k-first}) then follows from $\|f\|_1 \leq \|f\|_0$. Now observe that $A_n \leq A$ from $P^j V \leq \delta^j V +L \sum_{i=0}^{j-1} \delta^i \leq A\, V$.  
Setting $n := \max\big(0\, ,\, \lfloor (\ln \rho_k)^{-1}\ln\Delta_k  \rfloor\big)$ in (\ref{rem-spec-n-k-first}) allows us to derive  Inequality~(\ref{rem-spec-n-k-bis}). 

The proof of (\ref{rem-spec-n-k-ter}) is similar by exchanging the role of $P$ and $\Pc_k$. 
\end{proof}

Now we prove that Condition~(\ref{drift-gene-cond-gene}) of Introduction provides some uniform dual Doeblin-Fortet inequalities for the family $\{P, \Pc_k, k\ge 1\}$. For any $Q\in\cL(\cB_1)$, we denote by $Q'$ its adjoint operator. 
Define the following auxiliary semi-norm on $\cB_1'$: 
\begin{equation} \label{semi_norm_0}
\forall f'\in\cB_1',\quad \|f'\|_0 := \sup\big\{|f'(f)|,\, f\in\cB_0, \|f\|_0\leq 1\big\}.
\end{equation} 
\begin{alem} \label{lem-D-F} Let $Q$ be any non-negative linear operator on the space $\cB_1$ satisfying 
$$\exists\,  \delta\in(0,1),\ \exists L>0, \quad Q V \leq \delta V + L\, 1_{\X}.$$ 
Then: 
$\forall f'\in\cB_1', \ \|{Q\,'} f'\|_1 \leq \delta \|f'\|_1 + L\|f'\|_0.$
\end{alem}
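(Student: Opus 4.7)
The plan is to dualize the desired Doeblin-Fortet inequality by producing, for each $f\in\cB_1$ with $\|f\|_1\le 1$, a decomposition $Qf = g+h$ with $g\in\cB_1$, $\|g\|_1\le\delta$, and $h\in\cB_0$, $\|h\|_0\le L$. Given such a split, applying $f'$ and using
$$|f'(g)|\le\|f'\|_1\,\|g\|_1\le\delta\|f'\|_1,\qquad |f'(h)|\le\|f'\|_0\,\|h\|_0\le L\|f'\|_0$$
gives $|(Q'f')(f)| = |f'(Qf)|\le\delta\|f'\|_1 + L\|f'\|_0$, and taking the supremum over $f$ with $\|f\|_1\le 1$ yields the stated bound on $\|Q'f'\|_1$. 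So the entire proof reduces to exhibiting the decomposition.

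The decomposition is suggested by the pointwise domination flowing from non-negativity of $Q$ and the drift: for $\|f\|_1\le 1$,
$$|Qf|\ \le\ Q|f|\ \le\ QV\ \le\ \delta V + L\, 1_\X.$$
I would then take $h$ to be $Qf$ truncated at level $L$, namely $h := Qf$ on $\{|Qf|\le L\}$ and $h := L\,Qf/|Qf|$ on $\{|Qf|>L\}$, and set $g := Qf - h$. Clearly $|h|\le L$ everywhere, hence $h\in\cB_0$ with $\|h\|_0\le L$; meanwhile $g$ vanishes on $\{|Qf|\le L\}$, while on the complement $|g| = |Qf|-L\le\delta V$, so $\|g\|_1\le\delta$. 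Measurability of $g$ and $h$ is automatic, since both are Borel functions of the measurable map $Qf$.

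The one slightly delicate point is justifying $|Qf|\le Q|f|$ in the complex-valued case. Pointwise, write $Qf(x) = e^{i\theta(x)}|Qf(x)|$; then $|Qf(x)| = \mathrm{Re}\bigl(e^{-i\theta(x)}Qf(x)\bigr) = Q\bigl(\mathrm{Re}(e^{-i\theta(x)} f)\bigr)(x)$ by linearity of $Q$ applied with the scalar $e^{-i\theta(x)}$ fixed at $x$, and the latter is at most $Q|f|(x)$ by non-negativity of $Q$, since $\mathrm{Re}(e^{-i\theta(x)}f)\le|f|$. Beyond this pointwise computation, the proof is a clean instance of reducing a dual bound to a primal decomposition, and no substantive obstacle arises.
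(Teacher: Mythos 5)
Your proof is correct and is essentially the argument the paper itself delegates to \cite{FerHerLed11} (the paper omits the details, noting only that non-negativity of $Q$ suffices): your pointwise bound $|Qf|\le Q|f|\le QV\le \delta V+L\,1_\X$ followed by splitting $Qf$ into a piece bounded by $L$ and a piece dominated by $\delta V$, then dualizing against $f'$, is exactly that standard decomposition argument. The only step left implicit is that $Q$ commutes with taking real parts, i.e.\ maps real functions to real functions, which follows from non-negativity by writing a real $u\in\cB_1$ as $u^+-u^-$; with that one-line remark the inequality $|Qf|\le Q|f|$ in the complex case is fully justified.
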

For a Markov kernel $Q$, the proof of this lemma is given in \cite{FerHerLed11}. Since only the non-negativity of the operator $Q$ plays a role in this proof, the details are omitted. From Lemma~\ref{lem-D-F} we obtain the following statement.
\begin{alem} \label{D-F-uniform} If Condition~\emph{(\ref{drift-gene-cond-gene})} holds with parameters $(\delta,L)$, then the kernels $Q:=P$ and $Q:=\Pc_k$ for $k\ge 1$ 
satisfy  the following uniform Doeblin-Fortet inequality on $\cB_1'$: 
\begin{equation*} 
\forall f'\in\cB_1', \quad \|Q'f'\|_{1} \leq \delta\|f'\|_{1} + L \|f'\|_0. 
\end{equation*} 
\end{alem}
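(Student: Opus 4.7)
The plan is a direct unpacking of Lemma~\ref{lem-D-F}: the statement of Lemma~\ref{D-F-uniform} is nothing more than the observation that Lemma~\ref{lem-D-F} applies simultaneously, with one fixed pair of constants $(\delta,L)$, to every operator in the family $\{P\}\cup\{\Pc_k\}_{k\ge 1}$. So the proof reduces to verifying two items and then quoting the lemma.

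First I would check that the hypotheses of Lemma~\ref{lem-D-F} are in force for each $Q\in\{P,\Pc_k\}$. Non-negativity is immediate: $P$ is a Markov kernel, and each $\Pc_k$ is a non-negative sub-Markov kernel by the standing assumptions of the paper. The drift hypothesis $QV\le \delta V + L\, 1_\X$ is precisely what Condition~(\ref{drift-gene-cond-gene}) asserts for every $k\in\N^*\cup\{\infty\}$ with the same constants $(\delta,L)$, recalling the convention $\Pc_\infty:=P$.

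Next, I would apply Lemma~\ref{lem-D-F} to each such $Q$. This gives, for every $f'\in\cB_1'$,
\[
\|Q' f'\|_1 \le \delta\, \|f'\|_1 + L\, \|f'\|_0,
\]
where $\|\cdot\|_0$ on $\cB_1'$ is the semi-norm defined in (\ref{semi_norm_0}). Because the constants $\delta$ and $L$ are the \emph{same} ones appearing in~(\ref{drift-gene-cond-gene}), they do not depend on $k$, which is exactly the uniformity claimed in the lemma.

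I do not expect any genuine obstacle: all the analytical work has already been done in Lemma~\ref{lem-D-F}, and Lemma~\ref{D-F-uniform} is essentially a repackaging step whose purpose is to display the uniformity in $k$ that will be needed to apply the Keller--Liverani perturbation theorem in the subsequent sections. The only minor point worth a line of care is that the semi-norm $\|\cdot\|_0$ on $\cB_1'$ in the conclusion is the one from~(\ref{semi_norm_0}), which matches the semi-norm used in the statement of Lemma~\ref{lem-D-F}, so no reinterpretation is required.
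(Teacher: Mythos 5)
Your proposal is correct and matches the paper exactly: the paper states Lemma~\ref{D-F-uniform} as an immediate consequence of Lemma~\ref{lem-D-F}, which applies with the same $(\delta,L)$ to each non-negative operator $Q\in\{P\}\cup\{\Pc_k\}_{k\ge1}$ thanks to Condition~(\ref{drift-gene-cond-gene}). Nothing further is needed.
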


\section{From $P$ to $\Pc_k$: solution to (Q\ref{1})} \label{sec-V-to-Vk} 
For any $(a,\theta)\in\C\times(0,+\infty)$, let us define $D(a,\theta):=\{z\in\C : |z-a| < \theta\}$ and $\overline{D}(a,\theta):=\{z\in\C : 
|z-a| \le \theta\}$. For any $T\in\cL(\cB_1)$, the spectrum of $T$ is denoted by $\sigma(T)$, and for any $(r,\vartheta)\in(0,1)^2$, we introduce the following subsets of the complex plan   
\begin{equation*}  
\cV(r,\vartheta,T) := \big\{ z\in\C : |z|\le r  \text{ or }\,  d(z,\sigma(T)) \leq \vartheta\big\} \quad \text{and} \quad \cV(r,\vartheta,T)^c := \C\setminus \cV(r,\vartheta,T),
\end{equation*}
where $d(z,\sigma(T)) := \inf\{|z-\lambda|,\, \lambda\in\sigma(T)\}$. Below $P$ is assumed to be $V$-geometrically ergodic. In particular $P$ is quasi-compact on $\cB_1$, that is: $r_{ess}(P) < 1$ (see Section~\ref{sec-mino}). Under the additional Condition~(\ref{drift-gene-cond-gene}), we set 
\begin{equation} \label{def-hat-alpha}
\hat\alpha := \max\big(r_{ess}(P),\delta\big). 
\end{equation} 
Lemma~\ref{D-F-uniform} ensures that $Q:=P$ and $Q:=\Pc_k$ for $k\ge 1$ 
satisfy  the following Doeblin-Fortet inequalities on $\cB_1'$ (use the fact  that $P$ and $\Pc_k$ are contraction on $\cB_0$): 
\begin{equation} \label{doeb-fortet-dual-itere}
\forall n\in\N^*,\ \forall f'\in\cB_1', \quad \|{Q'}^n f'\|_{1} \leq {\hat\alpha}^n\|f'\|_{1} + B \|f'\|_0 \qquad \text{with}\quad B:= \frac{L}{1-\hat\alpha}. 
\end{equation} 
For any $r>\hat\alpha$ and $\vartheta>0$, we set:  
\begin{subequations}
\begin{gather}
H \equiv H(r,\vartheta,P) := \sup_{z\in \cV(r,\vartheta,P)^c} \|(zI-P)^{-1}\|_1  \label{H-r-vartheta-stat-enonce}\\ 
n_1 \equiv n_1(r) := \left\lfloor \frac{\ln2}{\ln(r/\hat\alpha)}\right\rfloor +1 \quad \quad 
n_2 \equiv n_2(r,\vartheta,P) := \bigg\lfloor \frac{\ln\big(8B(B+3)\, r^{-n_1}H\big)}{\ln(r/\hat\alpha)}\bigg\rfloor + 1 \label{n1-n2} \nonumber \\
\varepsilon_1 \equiv \varepsilon_1(r,\vartheta,P) := \frac{r^{n_1+n_2}}{8B\big(H B +(1-r)^{-1}\big)}. \label{def-eps-1}
\end{gather}
\end{subequations}
\begin{atheo} \label{pro-bv} 
Let $P$ be a $V$-geometrically ergodic Markov kernel with rate $\rho\in(0,1)$ in \emph{(\ref{V-geo-cond})}. Assume that Conditions~\emph{(\ref{C01})} and \emph{(\ref{drift-gene-cond-gene})} hold. Let $(r,\vartheta)\in(0,1)^2$ be such that 
\begin{equation} \label{cor-cond-r-var}
\max(\hat\alpha,\rho) + \vartheta < r < 1-\vartheta.
\end{equation}
Then, for any $k\in\N^*$ such that the convergence condition \emph{(\ref{C01})} holds at rate $\varepsilon_1$ that is   
\begin{equation} \label{cond-cont-faible}
\|\Pc_k  - P\|_{0,1} \leq \varepsilon_1, \tag{$\cE_{0,1}$} 
\end{equation}
and such that \emph{(\ref{Vk-geo-cond})} holds with some rate $\rho_k$ satisfying $\rho_k < 1-\vartheta,$
the following inequalities are valid. 
\leftmargini 1.3em
\begin{enumerate} \setlength{\itemsep}{-4mm}
  \item Inequality~\emph{(\ref{Vk-geo-cond})} holds with rate $r$, more precisely 
\begin{equation} \label{unif-bound-rate}
\forall n\geq 0, \quad  \|{\Pc_k}^{\;n}  - \widehat\pi_k(\cdot)\phi_k\|_1 \leq  
 c\, r^{n+1}  \quad \text{ with } c\equiv c(r,\vartheta,P)  :=   
\frac{4(B+1)}{r^{n_1}(1-r)} + \frac{1}{2\, \varepsilon_1}.	
   \end{equation}
   \item We have, with $A$ defined in \emph{(\ref{As-bounded})} and $\Delta_k := \|\Pc_k  - P\|_{0,1}$,
\begin{equation} \label{th-TV-ineg} 
\| \widehat\pi_k - \pi \|_{TV} \ \leq\ \widehat\pi_k(1_\X)\, |1-\pi(\phi_k)| + 
\frac{L}{1-\delta}\bigg(2c + \frac{A}{\ln r^{-1}}\, |\ln \Delta_k|\bigg)\Delta_k. 
\end{equation}
\end{enumerate}
\end{atheo}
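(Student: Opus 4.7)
The strategy is to combine the Keller-Liverani weak perturbation theorem \cite{Liv01} with an identification of the spectral projector of $\Pc_k$ at $1$ as $\Pi_k:=\widehat\pi_k(\cdot)\phi_k$, and then to deduce the total variation bound (\ref{th-TV-ineg}) by substitution into Proposition~\ref{pro-P-hatP-en3fois}(a). The inputs required by Keller-Liverani are already prepared: the uniform dual Doeblin-Fortet inequalities (\ref{doeb-fortet-dual-itere}) supplied by Lemma~\ref{D-F-uniform} (combined with $r_{ess}(P)\leq\rho\leq\hat\alpha$); the weak convergence (\ref{C01}); and the resolvent bound (\ref{H-r-vartheta-stat-enonce}) on the exterior set $\cV(r,\vartheta,P)^c$. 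Condition (\ref{cor-cond-r-var}) ensures that $1$ is the only spectral value of $P$ outside $\overline{D}(0,r)$, so $\cV(r,\vartheta,P)=\overline{D}(0,r)\cup\overline{D}(1,\vartheta)$.

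For Part~1, the quantitative Keller-Liverani statement \cite[Cor.~1]{Liv01} applied under the hypothesis $\|\Pc_k-P\|_{0,1}\leq\varepsilon_1$ yields: (a)~existence of $(zI-\Pc_k)^{-1}$ on $\cV(r,\vartheta,P)^c$ with a uniform $\|\cdot\|_1$-bound explicit in $H$, $B$, $r$, $\vartheta$, $n_1$, $n_2$; (b)~convergence of the spectral projector $\Pi_k^{\mathrm{KL}}:=(2\pi i)^{-1}\oint_{|z-1|=\vartheta}(zI-\Pc_k)^{-1}dz$ to $\Pi=\pi(\cdot)1_\X$ in $\|\cdot\|_{0,1}$, and hence, by the rank-stability arguments of \cite{FerHerLed11}, $\mathrm{rank}(\Pi_k^{\mathrm{KL}})=1$. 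Next, one identifies $\Pi_k^{\mathrm{KL}}$ with $\Pi_k$: from $\Pc_k\phi_k=\phi_k$, $\widehat\pi_k\Pc_k=\widehat\pi_k$ and $\widehat\pi_k(\phi_k)=1$, the operator $\Pi_k$ is a rank-one projector commuting with $\Pc_k$ whose range is the eigenspace at $1$; moreover the gap assumption $\rho_k<1-\vartheta$ combined with (\ref{Vk-geo-cond}) forces $\sigma(\Pc_k)\setminus\{1\}\subset\overline{D}(0,\rho_k)$, which is disjoint from $\overline{D}(1,\vartheta)$. Therefore $\sigma(\Pc_k)\cap\overline{D}(1,\vartheta)=\{1\}$ is simple, $\Pi_k^{\mathrm{KL}}=\Pi_k$, and $\sigma(\Pc_k)\setminus\{1\}\subset\overline{D}(0,r)$. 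Finally, the identities $\Pi_k\Pc_k=\Pc_k\Pi_k=\Pi_k$ yield $\Pc_k^n-\Pi_k=((I-\Pi_k)\Pc_k)^n$, and a Dunford-Taylor representation on a contour slightly outside $|z|=r$ combined with the resolvent estimate~(a) produces the quantitative bound (\ref{unif-bound-rate}), with $n_1$, $n_2$, $\varepsilon_1$ and $c$ matching the calibration in \cite[Cor.~1]{Liv01}.

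Part~2 then follows by substituting $(C_k,\rho_k)\leftarrow(c\, r,r)$ from Part~1 into Proposition~\ref{pro-P-hatP-en3fois}(a): the relations $2C_k/\rho_k=2c$ and $\ln(\rho_k^{-1})=\ln r^{-1}$ convert (\ref{rem-spec-n-k-bis}) into (\ref{th-TV-ineg}). The hard part is concentrated in Part~1, and is twofold: the rank-one identification $\Pi_k^{\mathrm{KL}}=\Pi_k$ hinges both on the stability of ranks of spectral projectors under convergence in the mixed norm $\|\cdot\|_{0,1}$ (for which the duality arguments of \cite{FerHerLed11} are essential) and on the gap condition $\rho_k<1-\vartheta$; and extracting the precise constant $c=4(B+1)r^{-n_1}/(1-r)+(2\varepsilon_1)^{-1}$ demands careful bookkeeping through the Liverani contour estimates.
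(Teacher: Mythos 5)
Your proposal is correct and follows essentially the same route as the paper: the Keller-Liverani theorem is applied through the dual Doeblin-Fortet inequalities (\ref{doeb-fortet-dual-itere}) to obtain $\sigma(\Pc_k)\subset\cV(r,\vartheta,P)$ and the resolvent bound $c(r,\vartheta,P)$ on $\cV(r,\vartheta,P)^c$; the hypothesis (\ref{Vk-geo-cond}) with $\rho_k<1-\vartheta$ identifies the spectral projection over $\overline{D}(1,\vartheta)$ with $\widehat\pi_k(\cdot)\phi_k$; a Cauchy contour of radius $\kappa\in(r,1-\vartheta)$ then gives (\ref{unif-bound-rate}); and Proposition~\ref{pro-P-hatP-en3fois}(a) applied with rate $r$ and constant $c\,r$ yields (\ref{th-TV-ineg}). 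One caveat: your intermediate appeal to projector convergence and rank stability at threshold $\varepsilon_1$ is not justified, since that part of \cite{Liv01} requires the possibly smaller $\varepsilon_0$ of (\ref{def-eps-0-app}); it is however harmless, because, exactly as in the paper, the gap assumption $\rho_k<1-\vartheta$ already forces $\sigma(\Pc_k)\cap\overline{D}(1,\vartheta)=\{1\}$ and the identification of the projector, and the parenthetical claim $\rho\le\hat\alpha$ is neither needed nor true in general.
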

Under Conditions~(\ref{V-geo-cond}), (\ref{C01}) and (\ref{drift-gene-cond-gene}), the conclusions (\ref{unif-bound-rate})-(\ref{th-TV-ineg}) hold true for $k$ large enough, so that Issue~(Q\ref{1}) is solved. Indeed, the fact that Condition~(\ref{cond-cont-faible}) is fulfilled for $k$ large enough follows from (\ref{C01}). That (\ref{Vk-geo-cond}) holds with some rate $\rho_k < 1-\vartheta$ for $k$ large enough is more difficult to establish. This follows from Proposition~\ref{cor-bv-app} which ensures that a sufficient condition for such a property to hold is that $k\geq k_0$, with $k_0$ defined in (\ref{con-k0-r-var}).  However in practice we do not need to use $k_0$ since Property~(\ref{Vk-geo-cond}) with $\rho_k < 1-\vartheta$ may be fulfilled for $k<<k_0$. This explains why $k_0$ is not introduced in Theorem~\ref{pro-bv}. Actually, under Conditions~(\ref{V-geo-cond}), (\ref{C01}) and (\ref{drift-gene-cond-gene}), the results \cite[Prop.~3.1]{Liv01} and the spectral rank-stability property \cite[Cor.~3.1]{Liv01} directly provide Property~(\ref{unif-bound-rate}) when  Condition~(\ref{cond-cont-faible}) is replaced with $\|\Pc_k  - P\|_{0,1} \leq \varepsilon_0$ with $\varepsilon_0 \equiv \varepsilon_0(r,\vartheta,P)$ defined in (\ref{def-eps-0-app}). In this case the assumption (\ref{Vk-geo-cond}) with $\rho_k < 1-\vartheta$ can be dropped in Theorem~\ref{cor-Vk-to-V}. When $\varepsilon_0$ is strictly less than $\varepsilon_1$ given in (\ref{def-eps-1}), using  Conditions~(\ref{cond-cont-faible}) and $\rho_k < 1-\vartheta$ invokes smaller $k$. 

In fact Theorem~\ref{pro-bv} is only relevant  when the rate $\rho$ and the bound $C$ in (\ref{V-geo-cond}) are  known. That $\rho$ must be known is necessary to choose $(r,\vartheta)$ in (\ref{cor-cond-r-var}). 
Constant $C$ must be known for an effective computation of an upper bound of $H\equiv H(r,\vartheta,P)$ in (\ref{H-r-vartheta-stat-enonce}) (see Remark~\ref{sub-rate-H}). Note that $H$ is involved in the definition of $\varepsilon_1(r,\vartheta,P)$, thus in the computation of the bound $c(r,\vartheta,P)$ in~(\ref{unif-bound-rate}). Furthermore note that, although Inequality~(\ref{th-TV-ineg}) is interesting from a theoretical point of view since it provides $\lim_k \| \widehat\pi_k - \pi \|_{TV}=0$, the bound in (\ref{th-TV-ineg}) is only computable when some explicit rate and constant in (\ref{V-geo-cond}) are known.  Another way for obtaining a computable bound for $\|\widehat\pi_k - \pi \|_{TV}$ is presented in Theorem~\ref{cor-Vk-to-V}
\begin{arem}  \label{sub-rate-H}
When $P$ is $V$-geometrically ergodic with some rate and constant $(\rho,C)$ in (\ref{V-geo-cond}), we have for any $(r,\vartheta)\in(0,1)^2$ such that $\rho+\vartheta < r < 1-\vartheta$:  
\begin{eqnarray*}
H(r,\vartheta,P) =  \sup_{z\in \overline{D}(0,r)^c \cap \overline{D}(1,\vartheta)^c} \|(zI-P)^{-1}\|_1 &\leq &  \frac{\pi(V)}{\vartheta}+\frac{C}{r-\rho} \ \le\  \frac{\pi(V)+C}{\vartheta}.
\end{eqnarray*}
These properties are derived by writing $g\in\cB_1$ as $g= \pi(g)1_\X + h$ with $h:=g - \pi(g)1_\X$. Using \emph{(\ref{V-geo-cond})} first implies that $(zI-P)^{-1}$ is well-defined in $\cL(\cB_1)$ for any $z\in\C$ such that $|z| > \rho$ and $z\neq1$, with: $(zI-P)^{-1}(g) = \pi(g)(zI-P)^{-1}1_\X + (zI-P)^{-1}(h)$. The first term is $(\pi(g)/(z-1))1_\X$. The second one can be bounded in norm $\|\cdot\|_1$ by using Neumann series. Since $\pi$ may be unknown, note that $\pi(V) \leq L/(1-\delta)$ under~\emph{(\ref{drift-gene-cond-gene})} (see \emph{(\ref{maj-pi-V})}). 
\end{arem}

Using Conditions~(\ref{cond-cont-faible}) and (\ref{doeb-fortet-dual-itere}) together with the condition $\rho_k < 1 - \vartheta$ allow us to derive Theorem~\ref{pro-bv}  from the first part of \cite[Prop.~3.1]{Liv01} as follows. 

\noindent\begin{proof}{} 
For any $T\in\cL(\cB_1)$, recall that $\|T'\|_1 = \|T\|_1$ and $\sigma(T)=\sigma(T')$. 
Given any  real numbers $r>\hat\alpha$ and $\vartheta>0$, we use the quantities introduced in (\ref{H-r-vartheta-stat-enonce})-(\ref{def-eps-1}). Note that $H$ in (\ref{H-r-vartheta-stat-enonce}) may be equivalently defined with $P'$ in place of $P$ since the resolvents of $P$ and $P'$ have the same norm in $\cL(\cB_1)$ and $\cL(\cB_1')$ respectively and $\cV(r,\vartheta,P)= \cV(r,\vartheta,P')$. 

The first assertion of Theorem~\ref{pro-bv} is proved in two steps. In a first step, using (\ref{V-geo-cond}), (\ref{doeb-fortet-dual-itere}) and (\ref{cond-cont-faible}) for some $k\in\N^*$, we show that we have with $c(r,\vartheta,P)$ given in (\ref{unif-bound-rate})
\begin{equation} \label{th-implic1}
\sigma(\Pc_k)\subset \cV(r,\vartheta,P)
\quad \text{ and } \quad  \sup_{z\in \cV(r,\vartheta,P)^c} \|(zI- \Pc_k)^{-1}\|_1 \leq 
c(r,\vartheta,P). 
\end{equation}
Clearly (\ref{th-implic1}) will hold true if we establish that the following properties are valid: 
\begin{gather} 
\sigma({\Pc_k}\,')\subset \cV(r,\vartheta,P) \quad \text{ and } \quad \sup_{z\in \cV(r,\vartheta,P)^c} 
\|(zI- {\Pc_k}\,')^{-1}\|_1  \leq  c(r,\vartheta,P). \label{implic1-proof-dual} 
\end{gather}
In fact we prove that (\ref{implic1-proof-dual}), so (\ref{th-implic1}), hold for any $\vartheta > 0$ and $r>\hat\alpha$. To that effect, the Keller-Liverani perturbation theorem \cite{KelLiv99} is applied to the adjoint operators of $P$ and $\Pc_k$ acting on $\cB_1'$. The auxiliary semi-norm $\|\cdot\|_0$ on $\cB_1'$ has been introduced in (\ref{semi_norm_0}).  We have that $\forall  f'\in\cB_1',\ \|f'\|_0\leq \|f'\|_1$. Moreover we have for any $f'\in\cB_1'$ 
$$\|P'f'\|_0 \leq \|f'\|_0 \quad \text{ and } \quad \forall k\geq 1,\ \|{\Pc_k}\,'f'\|_0 \leq \|f'\|_0.$$
Indeed, for $K:=P$ and $K:={\Pc_k}$, we obtain $\|Kf\|_0 \leq \|f\|_0$ from the positivity of $K$ and $K1_\X\leq 1_\X$, so that we have for any $f'\in\cB_1'$ and $f\in\cB_0$, $\|f\|_0\leq 1$: 
$$|(K'f')(f)| = |f'(Kf)| \leq \|f'\|_0\, \|Kf\|_0 \leq \|f'\|_0.$$ 
Next, using (\ref{drift-gene-cond-gene}), we know from Lemma~\ref{D-F-uniform} that $P'$ and ${\Pc_k}\,'$ (for every $k\geq 1$) 
satisfy the uniform Doeblin-Fortet inequalities (\ref{doeb-fortet-dual-itere}) on $\cB_1'$. Moreover we obtain by duality and from Inequality~(\ref{cond-cont-faible}) 
$$\|{\Pc_k}\,'  - P'\|_{1,0}
:= \sup_{f'\in{\cal B}_1',\|f'\|_{{\cal B}_1'}\leq 1} \|{\Pc_k}\,'f'  - P'f'\|_0 = \|{\Pc_k}  - P\|_{0,1} \leq \varepsilon_1.$$ 
Finally we know that $P'$ is quasi-compact on $\cB_1'$ with essential spectral radius less than $\hat\alpha$, and that the essential spectral radius of ${\Pc_k}\,'$ is zero since ${\Pc_k}$ is of finite rank. The previous facts  and the first part of \cite[Prop.~3.1]{Liv01} then give (\ref{implic1-proof-dual}). 

In a second step, we prove that Inequality~(\ref{unif-bound-rate}) holds under the assumptions of Theorem~\ref{pro-bv}.  Since $P$ is $V$-geometrically ergodic, we have $\sigma(P)\subset D(0,\rho) \cup \{1\}$. Thus, it follows from (\ref{cor-cond-r-var}) and the first inclusion in (\ref{th-implic1}) that $\sigma(\Pc_k)\subset \overline{D}(0,r)\cup \overline{D}(1,\vartheta)$. Moreover, since $\Pc_k$ is assumed to be $V$-geometrically ergodic with rate $\rho_k < 1-\vartheta$, we obtain that 
\begin{equation*}  
\sigma(\Pc_k)\subset \overline{D}(0,r)\cup \{1\}
\end{equation*}
and that $(1/2i\pi)\oint_{|z-1|=\vartheta} (zI-\Pc_k)^{-1}\, dz = \widehat\pi_k(\cdot)\, \phi_k$ from standard spectral theory. Thus we have for any $\kappa\in(r,1-\vartheta)$
\begin{eqnarray*}
\forall n\ge 1, \quad{\Pc_k}^{\;n} & = & \frac{1}{2i\pi} \oint_{|z-1|=\vartheta} (zI-\Pc_k)^{-1}\, dz \ + \  \frac{1}{2i\pi} \oint_{|z|=\kappa} z^n\, (zI-\Pc_k)^{-1}\, dz \\
& = & \widehat\pi_k(\cdot)\phi_k \ + \  \frac{1}{2i\pi} \oint_{|z|=\kappa} z^n\, (zI-\Pc_k)^{-1}\, dz. 
\end{eqnarray*}
Finally it follows from (\ref{th-implic1}) that 
$\|\Pc_k^{\;n} -  \widehat\pi_k(\cdot)\phi_k\|_1 \leq c(r,\vartheta,P)\, {\kappa}^{\,n+1}$. Since $\kappa\in(r,1-\vartheta)$ is arbitrary, this inequality holds with $r$ in place of $\kappa$, and it gives (\ref{unif-bound-rate}). 

Using (\ref{unif-bound-rate}), the second assertion of Theorem~\ref{pro-bv} follows from Inequality (\ref{rem-spec-n-k-bis}) of Proposition~\ref{pro-P-hatP-en3fois}. The proof of Theorem~\ref{pro-bv} is complete. 
\end{proof}

\section{From $\Pc_k$ to $P$: solution to (Q\ref{2})} \label{sec-Vk-to-V} 
Let $\rho_V(P)$ denote the spectral gap of $P$, that is the infimum bound of the rates $\rho$ in Inequality~(\ref{V-geo-cond}). Recall that $\rho_V(P)$ is unknown in general and that, even when it is known, finding an explicit constant $C$  associated with $\rho>\rho_V(P)$ in  (\ref{V-geo-cond}) is a difficult question. As mentioned in Introduction, there exist various methods \cite{MeyTwe94,LunTwe96,Bax05} providing rates and constants $(\rho,C)$ in Inequality~(\ref{V-geo-cond}), but they yield a rate $\rho$ which is much far from $\rho_V(P)$ except in specific cases. The first purpose of Theorem~\ref{cor-Vk-to-V} below is to derive  a rate $r_k$ in (\ref{V-geo-cond}) from $\rho_k$ in (\ref{Vk-geo-cond}), which is all the more close to $\rho_V(P)$ that $k$ is large and $\rho_k$ in (\ref{Vk-geo-cond}) is close to the so-called second eigenvalue of the finite-rank operator $\Pc_k$. The second purpose of Theorem~\ref{cor-Vk-to-V} is to provide an explicit constant $c_k\equiv c(r_k)$ associated with rate $r_k$ in (\ref{V-geo-cond}). Then Proposition~\ref{pro-P-hatP-en3fois} can be applied to derive a bound for $\| \widehat\pi_k - \pi \|_{TV}$. 

Let us briefly explain why the passage from the $V$-geometric ergodicity of $\Pc_k$  to that of $P$ is theoretically more difficult than the converse one studied in Section~\ref{sec-V-to-Vk}. Assume that Conditions~(\ref{V-geo-cond}), (\ref{C01}) and (\ref{drift-gene-cond-gene}) hold. Let $r>\hat\alpha$ with $\hat\alpha$ given in (\ref{def-hat-alpha}) and let $\vartheta>0$. 
With $\varepsilon_1(r,\vartheta,P)$ defined in (\ref{def-eps-1}), Property (\ref{th-implic1}) reads as follows  
\begin{equation} \label{implic1}
\|\Pc_k  - P\|_{0,1} \leq \varepsilon_1(r,\vartheta,P) \quad \Longrightarrow\quad       \sigma(\Pc_k)\subset \cV(r,\vartheta,P),\  
      \displaystyle \sup_{z\in \cV(r,\vartheta,P)^c} \|(zI- \Pc_k)^{-1}\|_1 < \infty.
\end{equation}
Next, for any fixed $k\in\N^*$, exchanging the role of $P$ and $\Pc_k$ in (\ref{th-implic1}) gives the following implication: 
\begin{equation} \label{implic2}
\|P  - \Pc_{k}\|_{0,1} \leq \varepsilon_1(r,\vartheta,\Pc_{k}) \ \Longrightarrow       \ \ \sigma(P) \subset \cV(r,\vartheta,\Pc_k),\  
      \displaystyle 
      \sup_{z\in \cV(r,\vartheta,\widehat P_k)^c} 
      \|(zI- P)^{-1}\|_1  < \infty.
\end{equation}
There is a significant difference between the implications (\ref{implic1}) and (\ref{implic2}) since the inequality $\|\Pc_k  - P\|_{0,1} \leq \varepsilon_1(r,\vartheta,P)$ in (\ref{implic1}) is satisfied for $k$ large enough from Condition~(\ref{C01}), while the inequality $\|P - \Pc_k\|_{0,1}\leq \varepsilon_1(r,\vartheta,\Pc_{k})$ in (\ref{implic2}) could fail for every $k$. Fortunately, such a failure  cannot occur when Conditions~(\ref{V-geo-cond}), (\ref{C01}) and (\ref{drift-gene-cond-gene}) hold (see Proposition~\ref{th-Vk-to-V}). 

Now we introduce the material used in Theorem~\ref{cor-Vk-to-V}. 
Under Condition~(\ref{drift-gene-cond-gene}), we define the following constants for any $\vartheta>0$, $k\in\N^*$ and $r_k\in(0,1)$, 
with $B$ defined in (\ref{doeb-fortet-dual-itere}): 
\begin{subequations}
\begin{gather} 
\varepsilon_1(k)\equiv \varepsilon_1(r_k,\vartheta,\Pc_k) := \frac{{r_k}^{n_1(k)+n_2(k)}}{8B\big(H_k B +(1-r_k)^{-1}\big)} \label{def-eps-k}
\\[1mm]
\text{with } \quad H_k \equiv H(r_k,\vartheta,\Pc_k) := \sup_{z\in{\cal V}(r_k,\vartheta,\Pc_k)^c} \big\|(zI-\Pc_k)^{-1}\big\|_1  
\label{H-r-vartheta-stat} \\[1mm]
n_1(k) 
:= \left\lfloor \frac{\ln2}{\ln(r_k/\hat\alpha)}\right\rfloor + 1, \quad n_2(k) \equiv n_2(r_k,\vartheta,P_k) := \bigg\lfloor \frac{\ln\big(8B(B+3)\,  {r_k}^{-n_1(k)}H_k\big)}{\ln(r_k/\hat\alpha)}\bigg\rfloor + 1. \label{def-n1-n2-k}
\end{gather}
\end{subequations}

To apply Theorem~\ref{cor-Vk-to-V}, some preliminary (possibly poor) rate $\rho$ in (\ref{V-geo-cond}) is assumed to be available. But it is worth noticing that no constant associated with $\rho$ in (\ref{V-geo-cond}) is required. Inequality~(\ref{unif-bound-rate-bis}) then provides a new rate $r_k$ in (\ref{V-geo-cond}) with an explicit associated constant $c_k$. 
\begin{atheo} \label{cor-Vk-to-V}
Assume that $P$ is $V$-geometrically ergodic with some rate $\rho\in(0,1)$ and that Conditions~ \emph{(\ref{C01})} and \emph{(\ref{drift-gene-cond-gene})} hold. Let $\vartheta$ be such that 
\begin{equation} \label{vartheta-th2}
0<\vartheta < \min\left(\frac{1-\hat\alpha}{3},\frac{1-\rho}{3}\right) 
\end{equation} 
and let ${\cal I}_\vartheta$ denote the following subset of integers: 
\begin{equation} \label{Itheta}
{\cal I}_\vartheta := \big\{\ k\in\N^* : \Pc_k \text{ satisfies \emph{(\ref{Vk-geo-cond})} with some rate } \rho_k < 1-2\vartheta\ \big\}.
\end{equation} 
Then, for any $k\in{\cal I}_\vartheta$ and for any $r_k$ such that 
\begin{equation} \label{cor-cond-r-var-k}
\max(\hat\alpha,\rho_k) + \vartheta < r_k < 1-\vartheta,
\end{equation}
the estimates in (a) and (b) below are valid provided that the convergence condition \emph{(\ref{C01})} holds at rate $\varepsilon_1(k)$, that is 
\begin{equation} \label{ineg-p-pk-inv}
 \|P - \Pc_k\|_{0,1} \leq \varepsilon_1(k) \tag{$\cE_{0,1}(k)$}. 
\end{equation}
\begin{enumerate}[(a)]
	\item The iterates of $P$ converge to $\pi(\cdot)1_{\X}$ with the following explicit rate of convergence: 
	\begin{equation} \label{unif-bound-rate-bis} 
\forall n\geq 0, \ \|P^n - \pi(\cdot)1_\X\|_1 \leq  c_k\, {r_k}^{n+1} 
\text{ with } 
c_k\equiv c_k(r_k,\vartheta) := 
\frac{4(B+1)}{{r_k}^{n_1(k)}(1-r_k)} + \frac{1}{2\, \varepsilon_1(k)}. 
   \end{equation}
   \item We have, with $A$ defined in \emph{(\ref{As-bounded})} and $\Delta_n := \|\Pc_n  - P\|_{0,1}$,
\begin{equation} \label{th-TV-ineg-bis} 
\forall n\geq 1,\quad \| \widehat\pi_n - \pi \|_{TV}\  \leq\  |1-\widehat\pi_n(1_\X)| + 
\frac{L}{1-\delta}\bigg(2c_k  + \frac{A}{\ln {r_k}^{-1}}\, |\ln \Delta_n|\bigg)\Delta_n. 
\end{equation}
\end{enumerate}
\end{atheo}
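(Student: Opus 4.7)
The plan is to mirror the proof of Theorem~\ref{pro-bv} with the roles of $P$ and $\Pc_k$ interchanged: apply the Keller--Liverani perturbation theorem on the dual space $\cB_1'$, now treating ${\Pc_k}\,'$ as the reference operator and $P'$ as its perturbation. All the hypotheses of the first part of~\cite[Prop.~3.1]{Liv01} are available for this swapped setting: Lemma~\ref{D-F-uniform} furnishes the uniform dual Doeblin--Fortet inequality~(\ref{doeb-fortet-dual-itere}) with constants $(\hat\alpha, B)$ that are independent of $k$; since $\Pc_k$ is finite-rank on $\cB_1$ we have $r_{ess}({\Pc_k}\,') = 0 \leq \hat\alpha$; and the perturbation in dual norm satisfies $\|P' - {\Pc_k}\,'\|_{1,0} = \|P - \Pc_k\|_{0,1} \leq \varepsilon_1(k)$ by assumption~(\ref{ineg-p-pk-inv}). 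The constants $H_k, n_1(k), n_2(k), \varepsilon_1(k)$ defined in~(\ref{def-eps-k})--(\ref{def-n1-n2-k}) are exactly those that the Keller--Liverani estimates produce when $\Pc_k$ plays the role of the unperturbed operator.

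From~\cite[Prop.~3.1]{Liv01} I therefore obtain, after dualizing back via $\sigma(P) = \sigma(P')$ and the equality of resolvent norms on $\cL(\cB_1)$ and $\cL(\cB_1')$,
\begin{equation*}
\sigma(P) \subset \cV(r_k, \vartheta, \Pc_k) \quad \text{and} \quad \sup_{z \in \cV(r_k,\vartheta,\Pc_k)^c} \|(zI - P)^{-1}\|_1 \leq c_k.
\end{equation*}
The $V$-geometric ergodicity of $\Pc_k$ with rate $\rho_k < 1 - 2\vartheta$ gives $\sigma(\Pc_k) \subset \overline{D}(0, \rho_k) \cup \{1\}$; combined with $\rho_k + \vartheta < r_k$ from~(\ref{cor-cond-r-var-k}) this reduces the localizing set to $\cV(r_k, \vartheta, \Pc_k) = \overline{D}(0, r_k) \cup \overline{D}(1, \vartheta)$. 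The preliminary inclusion $\sigma(P) \subset \overline{D}(0, \rho) \cup \{1\}$ together with $\vartheta < (1-\rho)/3$, which keeps $\overline{D}(1,\vartheta)$ and $\overline{D}(0,\rho)$ disjoint, then forces $\sigma(P) \cap \overline{D}(1,\vartheta) = \{1\}$ and $\sigma(P) \setminus \{1\} \subset \overline{D}(0, r_k)$.

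The proof of~(a) closes by the standard functional-calculus argument: for any $\kappa \in (r_k, 1-\vartheta)$,
\begin{equation*}
P^n \; = \; \pi(\cdot)\, 1_\X \; + \; \frac{1}{2i\pi}\oint_{|z|=\kappa} z^n\, (zI-P)^{-1}\, dz,
\end{equation*}
and bounding the integrand in $\|\cdot\|_1$ by $c_k\, \kappa^n$, then letting $\kappa \downarrow r_k$, yields~(\ref{unif-bound-rate-bis}). Part~(b) is then a direct application of Proposition~\ref{pro-P-hatP-en3fois}(b) with the explicit pair $(\rho, C) = (r_k, c_k\, r_k)$ supplied by~(\ref{unif-bound-rate-bis}): the ratio $C/\rho$ collapses to $c_k$, exactly reproducing the right-hand side of~(\ref{th-TV-ineg-bis}). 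The main obstacle is conceptual rather than computational: one must verify that the Keller--Liverani machinery can be run with the finite-rank $\Pc_k$ as the reference operator while keeping the same quantitative form for the constants as in Theorem~\ref{pro-bv}. This is ensured by the \emph{uniformity} in $k$ of the dual Doeblin--Fortet inequality and by $r_{ess}({\Pc_k}\,')=0$, which are precisely what~(\ref{drift-gene-cond-gene}) and the finite-rank hypothesis on $\Pc_k$ deliver.
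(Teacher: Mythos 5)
Your proposal is correct and follows essentially the same route as the paper: the authors prove Assertion~(a) exactly by rerunning the proof of Theorem~\ref{pro-bv} with the roles of $P$ and $\Pc_k$ exchanged (Keller--Liverani on $\cB_1'$ with ${\Pc_k}\,'$ as reference operator, then spectral localization and the contour-integral decomposition), and deduce Assertion~(b) from (\ref{unif-bound-rate-bis}) via Inequality~(\ref{rem-spec-n-k-ter}) of Proposition~\ref{pro-P-hatP-en3fois}, just as you do with $(\rho,C)=(r_k,c_k r_k)$.
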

Assertion~$(a)$ of Theorem~\ref{cor-Vk-to-V} can be established as in  Theorem~\ref{pro-bv} by exchanging the role of $P$ and $\Pc_k$. Assertion~$(b)$ follows from (\ref{unif-bound-rate-bis}) and Inequality (\ref{rem-spec-n-k-ter}) of Proposition~\ref{pro-P-hatP-en3fois}.

Under the assumptions of Theorem~\ref{cor-Vk-to-V}, the conclusions (\ref{unif-bound-rate-bis}) and (\ref{th-TV-ineg-bis}) are valid for $k$ large enough, so that Issue~(Q\ref{2}) is solved. Indeed,  first Proposition~\ref{cor-bv-app} ensures that a sufficient condition for $k$ to belong to $I_{\vartheta}$ is that 
$\|\Pc_k  - P\|_{0,1} \leq \varepsilon_0$ with $\varepsilon_0 \equiv \varepsilon_0(r,\vartheta,P)$ defined in (\ref{def-eps-0-app}) (use that $\max(\hat\alpha,\rho)+\vartheta < 1-2\vartheta$ from (\ref{vartheta-th2})).  Second Proposition~\ref{th-Vk-to-V} ensures that a sufficient condition for the convergence property (\ref{ineg-p-pk-inv}) to hold is that $k\in[\tilde k,+\infty)\cap{\cal I}_\vartheta$ for some integer $\tilde k$ (greater than $k_0$). 
Since integer $k\in{\cal I}_\vartheta$ satisfying (\ref{ineg-p-pk-inv}) may exist for $k <\max(\tilde k,k_0)$, 
an effective application of Theorem~\ref{cor-Vk-to-V} needs to find such a $k$ with a value as small as possible. This can be done in testing the validity of both properties $k\in{\cal I}_\vartheta$ and (\ref{ineg-p-pk-inv}) for increasing values of $k$. Finally note that the bound $H_k$ in (\ref{H-r-vartheta-stat}) can be bounded under Condition~(\ref{Vk-geo-cond}) following the lines of Remark~\ref{sub-rate-H} with  $\widehat P_k$ in place of $P$ (see Lemma~\ref{lem-cont-s} for discrete Markov kernels). An algorithm based on Theorem~\ref{cor-Vk-to-V} is proposed in Subsection~\ref{subsec-ex}  for truncating discrete Markov kernels. Numerical results for random walks on $\X:=\N$ are reported in Subsection~\ref{subsec-num-ex}.

The whole results of \cite{Liv01}, applied with $\Pc_k$ and $P$ viewed as unperturbed and perturbed operators respectively, directly provide Property~(\ref{unif-bound-rate-bis}) when  Condition~(\ref{ineg-p-pk-inv}) is replaced with $\|\Pc_k  - P\|_{0,1} \leq \varepsilon_0(k)$,  where $\varepsilon_0(k) \equiv \varepsilon_0(r_k,\vartheta,\Pc_k)$ is defined as in (\ref{def-eps-0-app}) with $r_k,H_k$ in place of $r,H$. In this case no preliminary bound $\rho$ in (\ref{V-geo-cond}) is required  in Theorem~\ref{cor-Vk-to-V}, and Condition (\ref{vartheta-th2}) is : $0 < \vartheta < (1-\hat\alpha)/3$ (in fact the $V$-geometric ergodicity assumption in Theorem~\ref{cor-Vk-to-V} may be replaced by the quasi-compactness of $P$ on $\cB_1$). When $\varepsilon_0(k)$ is  strictly less than $\varepsilon_1(k)$ given in (\ref{def-eps-k}) and a preliminary bound $\rho$ in (\ref{V-geo-cond}) is known, the alternative Assertion~$(a)$ in Theorem~\ref{cor-Vk-to-V} is of interest since it involves smaller integers $k$. Note that the second eigenvalue of $\Pc_k$ will be all the more tractable that 
$k$ is small since the rank of $\Pc_k$ is expected to increase with respect to $k$. 

When $r_k$ gets closer to $\rho_V(P)$, the constant $c_k$ associated with $r_k$ in (\ref{unif-bound-rate-bis}) goes to $+\infty$. Numerical evidence of this fact is provided by Table~\ref{Ex-rk-ck}. More generally, an efficient use of Theorem~\ref{cor-Vk-to-V} needs to make a trade-off  between the rate $r_k$ and the constant $c_k$ in (\ref{unif-bound-rate-bis}) since the choice of $(\vartheta,r_k)$ greatly affects the bound $H_k$ in (\ref{H-r-vartheta-stat}), thus the constant $c_k$.  

\begin{arem} \label{effectiv-var-total}
 Property~\emph{(\ref{th-TV-ineg-bis})} provides a bound $\|\widehat\pi_n - \pi \|_{TV} = O(\Delta_n|\ln \Delta_n|)$ with computable constants since $r_k$ and $c_k$ are deduced from computations involving the finite-rank operator $\Pc_k$ for some $k$. Consequently, given some error term $\varepsilon>0$, this bound can be used to find an integer $n\equiv n(\varepsilon)\geq1$ such that $\|\widehat\pi_n - \pi \|_{TV}\leq \varepsilon$. However, as illustrated in Section~\ref{sec-appli}, applying Inequality~\emph{(\ref{rem-spec-n-k-bis})} for larger and larger $k$ to test the previous property is more efficient  since the constant $C_k$ in \emph{(\ref{Vk-geo-cond})} is much smaller than the constant $c_k$ in \emph{(\ref{unif-bound-rate-bis})}, which in practice is derived from $C_k$ (see the above comments  on $H_k$). 
\end{arem}

\section{Bounds on the essential spectral radius of quasi-compact kernels on $\cB_1$} \label{sec-mino}

From the definition of $\hat\alpha$ in (\ref{def-hat-alpha}), the new rates in (\ref{Vk-geo-cond}) provided  by Theorem~\ref{pro-bv}, or in (\ref{V-geo-cond}) by Theorem~\ref{cor-Vk-to-V}, are all the more interesting that an accurate bound  of $r_{ess}(P)$ is known (e.g.~see Condition (\ref{cor-cond-r-var-k}) for the new rate in (\ref{V-geo-cond})). 
Estimates of $r_{ess}(P)$ are provided under two different kind of assumptions on the Markov kernel $P$. The first bound for $r_{ess}(P)$ (Theorem~\ref{main}) is derived from a result on positive operators \cite{Sch71}. The second one (Proposition~\ref{pro-qc-bis}) is obtained by duality from the quasi-compactness criterion of \cite{Hen93}. 

First we recall some basic facts on quasi-compactness and on the essential spectral radius of a linear bounded operator $T$ with positive spectral radius on a complex Banach space $(\cB,\|\cdot\|)$. Its spectral radius $r(T):=\lim_n\|T^n\|^{1/n}$, where $\|\cdot\|$ also stands for the operator norm on $\cB$, is assumed to be $1$ (if not, replace $T$ with $r(T)^{-1}T$). We denote by $I$ the identity operator on $\cB$.  The next definitions of quasi-compactness and essential spectral radius are from \cite{Hen93} (to be compared with the reduction of matrices or compact operators). Additional materials are developed in \cite{Nev64,Kre85,Nag86}. 
\begin{adefi} \label{def-q-c} 
$T$ is quasi-compact on $\cB$ if there exist $r_0\in(0,1)$, $m\in\N^*$ and  $(\lambda_i,p_i)\in\C\times \N^*$ for $ i=1,\ldots,m$ such that: 
\begin{subequations}
\begin{equation} \label{noyit}
\cB = \overset{m}{\underset{i=1}{\oplus}} \ker(T - \lambda_i I)^{p_i}\, \oplus H, 
\end{equation}
where the $\lambda_i$'s are such that 
\begin{equation} \label{noyit-lambda}
|\lambda_i| \geq r_0\quad \text{ and } \quad 1\le \dim\ker(T-\lambda_i I)^{p_i} < \infty,
\end{equation}
and $H$ is a closed $T$-invariant subspace such that 
\begin{equation} \label{noyit-H}
\inf_{n\geq1}\big(\sup_{h\in H,\, \|h\|\leq1}\|T^nh\|\big)^{1/n} < r_0.
\end{equation}
\end{subequations}
Then the essential spectral radius of $T$, denoted by $r_{ess}(T)$, is given by 
\begin{equation} \label{def_ress}
r_{ess}(T) = \inf\big\{\ r_0\in(0,1) \text{ such that~we have (\ref{noyit}), (\ref{noyit-lambda}),  (\ref{noyit-H})} \ \big\}.
\end{equation}
\end{adefi}

Note that the infimum bound in the left-hand side of (\ref{noyit-H}) is nothing else but the spectral radius of the restriction of $T$ to $H$. It is well-known that the essential spectral radius of $T$  is usually defined by $r_{ess}(T) := \lim_n(\inf \|T^n-K\|)^{1/n}$, where the infimum is taken over the ideal of compact operators $K$ on  $\cB$. Consequently, $T$ is quasi-compact if and only if there exist some $n_0\in\N^*$ and some compact operator $K_0$ on $\cB$ such that 
$r(T^{n_0}-K_0)<1$. Under the previous condition we have 
\begin{equation} \label{ray-ess-bis}
r_{ess}(T) \leq (r(T^{n_0}-K_0))^{1/n_0}.
\end{equation}
Finally recall that $r_{ess}(T) = r_{ess}(T^p)^{1/p}$ for every $p\geq1$ since $\lim_n(\inf \|T^n-K\|)^{1/n} = \lim_k(\inf \|T^{p k} - K\|)^{1/(p k)}$. 
%
\subsection{Bounds on $r_{ess}(P)$ under drift/minorization conditions} \label{subsec-qc-drift}
%

In the next theorem, a simple bound of $r_{ess}(P)$ is given in terms of the parameters of the 
drift/minorization conditions (\ref{inequality-drift})-(\ref{th-small}) below (note that no irreducibility/aperiodicity condition is assumed).  
\begin{atheo} \label{main}
Assume that $P$ satisfies the following drift/minorization conditions: there exist a bounded set $S\in\cX$ and a positive measure $\nu$ on $(\X,\cX)$ such that   
\begin{subequations}
\begin{gather} 
\exists \delta\in(0,1),\ \exists L>0,\quad PV \leq \delta \, V + L\, 1_S, \label{inequality-drift} \\
\forall x\in \X,\ \forall A\in\cX,\ \ \ P(x,A) \geq \nu(1_A)\, 1_S(x).\label{th-small}
\end{gather}
\end{subequations}
Then $P$ is a power-bounded quasi-compact operator on $\cB_1$ with 
\begin{equation} \label{first-r-ess} 
 r_{ess}(P) \leq \frac{\delta\, \nu(1_\X) + \tau}{\nu(1_\X)+\tau} \quad \text{where } \tau:=\max(0,L-\nu(V)).
\end{equation}
\end{atheo}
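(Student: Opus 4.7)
The plan is to decompose $P = N + R$, where $N$ is a rank-one operator isolating the minorization and $R := P - N$ is a nonnegative remainder, and then to control $r(R)$ by exhibiting a strictly positive super-eigenfunction for $R$. I would set $Nf := \nu(f)\, 1_S$; by (\ref{th-small}), $R = P - N$ is then a nonnegative kernel. Since $S$ is bounded, $V$ is bounded on $S$, and evaluating (\ref{inequality-drift}) and (\ref{th-small}) at some $x\in S$ gives $\nu(V) \leq PV(x) \leq \delta\,\sup_S V + L < \infty$, so that both $N$ and $R$ belong to $\cL(\cB_1)$. Moreover $N$ has rank one (its range is contained in $\C\cdot 1_S$) and is therefore compact on $\cB_1$. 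Power-boundedness of $P$ on $\cB_1$ is immediate from iterating (\ref{inequality-drift}): $P^n V \leq \delta^n V + L/(1-\delta) \leq \bigl(1+L/(1-\delta)\bigr)\,V$.

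The central step is to produce $W>0$ with $RW \leq \gamma^* W$, where $\gamma^* := (\delta\beta+\tau)/(\beta+\tau)$ and $\beta := \nu(1_\X)$. Trying the ansatz $W := V + c\cdot 1_\X$ with a scalar $c\geq 0$, a direct computation using $PV \leq \delta V + L\, 1_S$, $P\, 1_\X = 1_\X$ and $NW = (\nu(V) + c\beta)\, 1_S$ gives
\begin{equation*}
RW \ \leq\ \delta V + c\cdot 1_\X + \bigl(L - \nu(V) - c\beta\bigr)\, 1_S.
\end{equation*}
I would then choose $c := \tau/\beta$ when $\beta>0$ (and $c := 0$ in the degenerate cases $\beta = 0$ or $\tau = 0$, where (\ref{first-r-ess}) is trivial or reduces to $r_{ess}(P)\leq \delta$); by the definition of $\tau$ this choice makes the coefficient of $1_S$ nonpositive. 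Combined with $V\geq 1$, the bound $RW\leq \delta V + c\cdot 1_\X$ becomes $RW\leq\gamma^*\,W$ via the algebraic identity $\gamma^*(1+c) = \delta + c$, which is how $\gamma^*$ was defined.

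Iterating the monotonicity of the nonnegative operator $R$ then yields $R^n W \leq (\gamma^*)^n W$ for every $n\geq 1$. Since $V\leq W\leq (1+c)V$, this translates into the operator-norm estimate $\|R^n\|_1 \leq (1+c)(\gamma^*)^n$ on $\cB_1$, whence $r(R) \leq \gamma^*$; this is the content of the positive-operator spectral-radius result of \cite{Sch71}. To finish I invoke the classical invariance of the essential spectral radius under compact perturbations, which gives $r_{ess}(P) = r_{ess}(R + N) = r_{ess}(R) \leq r(R) \leq \gamma^*$, that is, (\ref{first-r-ess}).

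The crux of the argument is the guess of the auxiliary function $W = V + c\,1_\X$ together with the value $c = \tau/\beta$: this is the unique choice that exactly balances the excess $L - \nu(V)$ carried by the drift against the loss $-c\beta\,1_S$ produced by $N$, and this balance is what yields the sharp constant $(\delta\beta + \tau)/(\beta + \tau)$ rather than a cruder bound like $\delta + \tau/\beta$ or $\max(\delta,\text{something})$. Beyond that, I do not anticipate any conceptual obstacle; the remaining steps are standard Banach-lattice manipulations and the Fredholm-type invariance of $r_{ess}$ under compact perturbations.
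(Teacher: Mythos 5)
Your proof is correct, and while it starts from the very same rank-one decomposition as the paper ($P=T+R$ with $Tf:=\nu(f)1_S$, compact perturbation to reduce $r_{ess}(P)$ to $r(R)$), the key step is handled by a genuinely different argument. The paper bounds $r(R)$ by duality: it invokes the positive-operator result of \cite{Sch71} to produce a nontrivial nonnegative functional $\eta\in\cB_1'$ with $\eta\circ R=r(R)\,\eta$, and then tests $\eta$ against $1_\X$ and $V$ (using $P1_\X=1_\X$ and the drift inequality) to squeeze $r(R)$ below $(\delta\nu(1_\X)+\tau)/(\nu(1_\X)+\tau)$. You instead exhibit an explicit super-eigenfunction $W=V+(\tau/\beta)1_\X$ with $RW\le\gamma^* W$, and your computation checks out: with $c=\tau/\beta$ the coefficient of $1_S$ is $L-\nu(V)-\tau\le0$, and $\delta V+c1_\X\le\gamma^*(V+c1_\X)$ reduces to $\frac{c(1-\delta)}{1+c}(V-1_\X)\ge0$, which holds since $V\ge1$. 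Iterating positivity gives $\|R^n\|_1\le(1+c)(\gamma^*)^n$, so $r(R)\le\gamma^*$ follows from Gelfand's formula alone — your attribution of this last implication to \cite{Sch71} is unnecessary (and slightly off: that reference is what the paper needs for the dual eigenfunctional, which your route avoids entirely). The trade-off: the paper's argument is shorter to state but rests on a nontrivial existence theorem for positive operators, whereas yours is elementary and constructive, yielding explicit norm bounds on $R^n$ as a by-product; one small bookkeeping point is that, exactly as in the paper (whose proof divides by $\nu(1_\X)$), you need $\nu\neq0$ — i.e. $\beta>0$ — both to define $c$ and to get $\gamma^*<1$, hence quasi-compactness; this is implicit in ``positive measure''. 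Your finiteness argument for $\nu(V)$ also works (a single point $x\in S$ suffices; boundedness of $V$ on $S$ is not actually needed).
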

In the long version \cite{Hen06} of \cite{Hen07}, the  quasi-compactness of $P$ on $\cB_1$ is proved under the assumptions of Theorem~\ref{main}. The bound obtained for $r_{ess}(P)$ in \cite[Th.~IV.2]{Hen06} 
is less tractable than (\ref{first-r-ess}) since it is expressed in terms of the hitting time for $S$. Also mention that, in the unpublished paper \cite{Del97}, the quasi-compactness of Markov kernels is obtained on the subspace of continuous functions of $\cB_1$ under some drift/minorization conditions. No bound on the essential spectral radius is presented in \cite{Del97}.  

The short proof of Theorem~\ref{main} illuminates the role of the drift and minorization conditions to obtain good spectral properties of $P$ on $\cB_1$. In particular, using \cite[Cor.~IV.3]{Hen06}, this provides a simple proof of the fact that under the assumptions of Theorem~\ref{main}, together with irreducibility and aperiodicity assumptions, $P$ is $V$-geometrically  ergodic. This fact is well-known (e.g. see \cite{MeyTwe93}).

\noindent\begin{proof}{}
Condition~(\ref{inequality-drift}) implies that $PV \leq \delta\, V + L\,1_\X$, thus 
\begin{equation} \label{itere-borne}
\sup_{n\geq0}\|P^n\|_1 \leq \frac{1 - \delta + L}{1-\delta}  
\end{equation}
so that $P$ is power-bounded on $\cB_1$. Then, from $P1_\X=1_\X$ and $1_\X\in\cB_1$, we have $r(P)=1$. Moreover, since $\|PV\|_1<\infty$, we deduce from (\ref{th-small}) that $\nu(V)<\infty$. Thus we can define the following rank-one operator on $\cB_1$: $Tf := \nu(f)\, 1_S$. Set $R := P-T$. From $T\geq0$ and from (\ref{th-small}), it follows that $0\leq R \leq P$, so $r(R) \leq 1$. Set $r:=r(R)$. If $r=0$, then $P$ is quasi-compact with $r_{ess}(P) =0$ from (\ref{ray-ess-bis}). Now assume that $r\in(0,1]$. Then, we know from \cite[Appendix, Cor.2.6]{Sch71} that there exists a nontrivial non-negative $\eta\in \cB_1'$  such that $\eta \circ R = r\, \eta$. From $P = T + R$, we have $\eta\circ P = \eta\circ T + r\, \eta$, thus $\eta(P 1_\X) = \eta(1_\X) = \eta(T 1_\X) + r\, \eta(1_\X)$. Hence $\eta(T 1_\X) = (1-r)\eta(1_\X)$, from which we deduce that $$\eta(1_S) = \frac{(1-r)\eta(1_\X)}{\nu(1_\X)} \leq \frac{(1-r)\eta(V)}{\nu(1_\X)}.$$
Next, we have $RV = PV - TV = PV - \nu(V) 1_S \leq  \delta\, V + (L-\nu(V))\, 1_S$. Hence, setting $\tau := \max(0,L-\nu(V))\ge 0$, we obtain 
\begin{equation} \label{r-ineg} 
r\, \eta(V) = \eta(RV) \leq \delta\, \eta(V) + \tau\, \eta(1_S) \leq \delta\, \eta(V) + \tau\frac{(1-r)\eta(V)}{\nu(1_\X)}.
\end{equation}
Since $\eta\neq 0$, we have $\eta(V) > 0$, and since $\delta\in(0,1)$, we cannot have $r=1$. Thus $r\in(0,1)$, and $P$ is quasi-compact from (\ref{ray-ess-bis}) with $r_{ess}(P) \leq r(P-T) = r$. Then (\ref{r-ineg}) gives (\ref{first-r-ess}). 
\end{proof}
\begin{arem} \label{rk-atom}
Recall that $A\in\cX$ is said to be an atom for  $P$ if $P(a,\cdot) =  P(a',\cdot)$ for any $(a,a')\in A^2$.
Any Markov model having  a regenerative structure is concerned with such a property (e.g. see \cite{Num84,Asm03}). 
If $P$ satisfies \emph{(\ref{inequality-drift})} with $S:=A$, then $r_{ess}(P)\leq \delta$.
Indeed, note that $P$ satisfies the minorization condition \emph{(\ref{th-small})}   with $A$ and $\nu(1_{\cdot}) := P(a_0,\cdot)$ for any $a_0\in A$. Choose $L:=\sup_{x\in A}(PV)(x)$ in  \emph{(\ref{inequality-drift})}. Since $A$ is an atom, we have $L=(PV)(a_0) = \nu(V)$ so that $\tau=0$ in \emph{(\ref{first-r-ess})}. 
\end{arem}
%
%
\subsection{Bound on $r_{ess}(P)$ under a weak drift condition} \label{sub-suf-K}

The key idea to obtain quasi-compactness in Proposition~\ref{pro-qc-bis} below  is to use the dual Doeblin-Fortet inequality  obtained in Lemma~\ref{lem-D-F}. Despite its great simplicity, this duality approach seems to be unknown in the literature. 
In particular it allows us to greatly simplify the arguments used in \cite{Wu04} since the well known statement \cite[Cor.~1]{Hen93} gives the bound $r_{ess}(P)\leq \delta$ provided that $P^\ell$ is compact from $\cB_0$ to $\cB_1$ for some $\ell\geq1$.  This compactness condition is much simpler than the assumptions of \cite{Wu04} based on sophisticated parameters $\beta_w(P)$ and $\beta_\tau(P)$ as measure of non-compactness of $P$.  Precise comparisons with \cite{Wu04} and complements  are  presented in \cite[Sect.~2.3]{GuiHerLed11}.  
Simple sufficient conditions for this compactness property are presented in \cite{GuiHerLed11}. For instance, this holds for any discrete Markov chains or for functional autoregressive models on $\X:=\R^q$ with absolutely continuous noise with respect to the Lebesgue measure. Finally note that Equality $r_{ess}(P) = \delta$ holds in several cases (see \cite{GuiHerLed11}).
\begin{apro} \label{pro-qc-bis} 
Assume that $P$ satisfies the following condition 
\begin{equation} \label{drift-gene-cond-gene-P} 
\exists\,  \delta\in(0,1),\ \exists L>0, \quad P V \leq \delta V + L\, 1_{\X}  \tag{\text{WD}}
\end{equation}
and that $P^\ell : \cB_0\r\cB_1$ is compact for some $\ell\geq1$. Then $P$ is a power-bounded quasi-compact operator on $\cB_1$, with $r_{ess}(P) \leq \delta$. 
\end{apro}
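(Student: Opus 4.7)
The plan is to apply the Hennion quasi-compactness criterion \cite[Cor.~1]{Hen93} not to $P$ itself but to the adjoint operator $P'$ acting on $\cB_1'$, taking $\|\cdot\|_1$ as the primary norm and the semi-norm $\|\cdot\|_0$ defined in (\ref{semi_norm_0}) as the auxiliary. This dualization is essentially forced: the weak drift condition (\ref{drift-gene-cond-gene-P}) does not directly yield a Doeblin--Fortet inequality for $P$ on $\cB_1$, whereas Lemma~\ref{lem-D-F} produces one for its adjoint at no cost. Power-boundedness is handled separately and is immediate: iterating (\ref{drift-gene-cond-gene-P}) gives $P^nV\leq \delta^n V+\frac{L}{1-\delta}1_{\X}$, so $\sup_{n}\|P^n\|_1\leq A$ with $A$ as in (\ref{As-bounded}).

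Applying Lemma~\ref{lem-D-F} with $Q=P$ provides $\|P' f'\|_1\leq \delta\|f'\|_1+L\|f'\|_0$ for every $f'\in\cB_1'$. Combined with the contraction $\|P' f'\|_0\leq\|f'\|_0$ (which holds because $P\geq 0$ and $P1_{\X}=1_{\X}$ give $\|Pg\|_0\leq\|g\|_0$ on $\cB_0$, exactly as used in the proof of Theorem~\ref{pro-bv}), a straightforward induction yields the iterated Doeblin--Fortet inequality
\begin{equation*}
\forall n\geq 1,\ \forall f'\in\cB_1',\qquad \|(P')^n f'\|_1\leq \delta^n\|f'\|_1+\frac{L}{1-\delta}\|f'\|_0,
\end{equation*}
which is precisely the form required by Hennion's criterion and already identifies $\delta$ as the candidate bound for $r_{ess}(P')$.

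The remaining hypothesis is that some iterate of $P'$ be $\|\cdot\|_0$-compact on the $\|\cdot\|_1$-unit ball of $\cB_1'$; this is where I would use the standing compactness assumption. By Schauder's theorem the compactness of $P^\ell:\cB_0\r\cB_1$ transfers to the Banach-space adjoint $(P^\ell)^{*}:\cB_1'\r\cB_0'$. The crucial identification is
\begin{equation*}
\|(P')^\ell f'\|_0\ =\ \sup_{g\in\cB_0,\,\|g\|_0\leq 1}\bigl|f'(P^\ell g)\bigr|\ =\ \|(P^\ell)^{*}f'\|_{\cB_0'},
\end{equation*}
so the set $\{(P')^\ell f':\|f'\|_1\leq 1\}$ is totally bounded in the semi-norm $\|\cdot\|_0$. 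With these three ingredients in place, \cite[Cor.~1]{Hen93} applied to $T=P'$ on the Banach space $(\cB_1',\|\cdot\|_1)$ with continuous auxiliary semi-norm $\|\cdot\|_0\leq\|\cdot\|_1$ delivers the quasi-compactness of $P'$ with $r_{ess}(P')\leq\delta$. Since the essential spectral radius is preserved under passage to the Banach-space adjoint, $r_{ess}(P)=r_{ess}(P')\leq\delta$, which concludes the argument.

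The only subtle step is the duality identification in the third paragraph: one must check carefully that the semi-norm $\|\cdot\|_0$ on $\cB_1'$ coincides with the pullback of the $\cB_0'$-norm along the restriction map $\cB_1'\r\cB_0'$ (it does, because $\cB_0\hookrightarrow\cB_1$ continuously thanks to $V\geq 1_{\X}$), so that Schauder's compactness transfers cleanly to the Hennion hypothesis. Once this is secured, everything else is routine assembly of known facts.
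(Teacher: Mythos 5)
Your proposal is correct and follows essentially the same route as the paper: dualize, derive the Doeblin--Fortet inequality for $P'$ on $\cB_1'$ from Lemma~\ref{lem-D-F}, transfer compactness via Schauder, and invoke \cite[Cor.~1]{Hen93}. The only cosmetic difference is that the paper first reduces to $\ell=1$ using $r_{ess}(P)=r_{ess}(P^\ell)^{1/\ell}$ whereas you keep $\ell$ general and verify the compactness hypothesis for the iterate directly; both are fine.
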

\begin{proof}{} 
Iterating (\ref{drift-gene-cond-gene-P}) ensures that $P$ is power-bounded on $\cB_1$ (see (\ref{itere-borne})). Since we have 
$r_{ess}(P) = (r_{ess}(P^\ell))^{1/\ell}$, 
we only consider the case $\ell:=1$, that is $P : \cB_0\r\cB_1$ is compact. 
Let $\cB_1'$ and $\cB_0'$ denote the dual spaces of $\cB_1$ and $\cB_0$ respectively. Let $P'$ denote the adjoint operator of $P$ on $\cB_1'$. In fact, we prove that $P'$ is a quasi-compact operator on $\cB_1'$ with $r_{ess}(P') \leq \delta$, so that $P$ satisfies the same properties on $\cB_1$. Since the operator $P : \cB_0\r\cB_1$ is assumed to be compact, so is $P' : \cB_1'\r\cB_0'$. Then we deduce from the Doeblin-Fortet inequality of Lemma~\ref{lem-D-F} and from \cite[Cor.~1]{Hen93} that $P'$ is a quasi-compact operator on $\cB_1'$ with $r_{ess}(P') \leq \delta$.  
\end{proof}
\begin{arem} \label{rk-set-itere}
If Conditions~\emph{(\ref{inequality-drift})-(\ref{th-small})} are fulfilled for some iterate $P^N$ in place of $P$ (with parameters $\delta_N<1$, $L_N>0$ and positive measure $\nu_N(\cdot)$), then the conclusions of Theorem~\ref {main} are valid with \emph{(\ref{first-r-ess})} replaced by 
$$r_{ess}(P) = r_{ess}(P^N)^{1/N} \leq \left(\frac{\delta_N \nu_N(1_\X) + \tau_N}{\nu_N(1_\X)+\tau_N}\right)^{1/N}
\quad \text{where } \tau_N:=\max(0,L_N-\nu_N(V)).$$ 
Similarly Proposition~\ref{pro-qc-bis} still holds when \emph{(\ref{drift-gene-cond-gene-P})} is replaced by the following condition 
\begin{equation*} 
\exists \delta\in(0,1), \ \exists N\in\N^*,\ \exists L >0,\quad P^NV \leq \delta^N\, V + L\, 1_{\X}.  
\end{equation*} 
Indeed the dual Doeblin-Fortet inequality of  Lemma~\ref{lem-D-F} extends by replacing ${P'},\delta$ by ${P'}^N$ and $\delta^N$ respectively. 
\end{arem}

\section{Application to truncation of discrete  Markov kernels} \label{sec-appli}
In this section we assume that $P:=(P(i,j))_{(i,j)\in\N^2}$ is a Markov kernel on $\X:=\N$. Let $B_k:=\{0,\ldots;k\}$ for any $k\ge 1$. We consider the $k$-th truncated (and augmented) matrix $P_k$: 	
\[ \forall (i,j)\in {B_k}^2,\quad P_k(i,j) := \begin{cases}
P(i,j) & \text{ if $(i,j)\in B_k\times B_{k-1}$} \\
\sum_{\ell\ge k}P(i,\ell) & \text{ if $(i,j)\in B_k\times \{k\}$}.
\end{cases}
\]
Such a matrix is generally called a linear augmentation (in the last column here) of the $(k+1)\times (k+1)$ northwest corner truncation of $P$. Other kinds of augmentation, as the 
censored Markov chain \cite{ZhaLiu96}, could be considered. 
Truncation approximation of an infinite stochastic matrix has a long story (e.g. see \cite{Sen81,Twe98,Liu10} and the references therein). 

The associated (extended) sub-Markov kernel $\Pc_k$  on $\N$ is defined by: 
\begin{equation*} 
\forall (i,j)\in\N^2,\quad \Pc_k(i,j) := \begin{cases}
P_k(i,j) & \text{ if $(i,j)\in {B_k}^2$} \\
\quad 0 & \text{ if $(i,j)\notin {B_k}^2$}.
\end{cases} 
\end{equation*}
\subsection{Theorem~\ref{cor-Vk-to-V} for truncation of discrete Markov kernels} \label{subsec-appli}
Consider any unbounded increasing sequence $V:=\{V(j)\}_{j\in\N}\in[1,+\infty)^\N$ with $V(0)=1$, and the associated  weighted space $(\cB_1,\|\cdot\|_1)$ given by 
$$\cB_1 := \big\{ f\in\C^\N\, : \|f\|_1 = \sup_{j\in\N}|f(j)|{V(j)}^{-1} < \infty\big\}.$$
The following lemma helps us to check the assumptions of Theorems~\ref{pro-bv} and \ref{cor-Vk-to-V}.
\begin{alem} \label{lem-gene-trunc} 
Assume that $P$ satisfies Condition~\emph{(\ref{drift-gene-cond-gene-P})} with parameters $(\delta,L)$. Then the following assertions hold. 
\begin{enumerate}[(i)]
	\item $P$ is a power-bounded quasi-compact operator on $\cB_1$ with $r_{ess}(P) \leq \delta$. 
	\item Condition~\emph{(\ref{drift-gene-cond-gene})} is fulfilled, that is:
$\forall k\in\N^*\cup \{\infty\},\ \Pc_kV \leq \delta V + L\, 1_{\N}$. 
  \item Property~\emph{(\ref{C01})}, that is $\lim_k \|\Pc_k - P\|_{0,1} = 0$, holds true since 
\begin{equation} \label{cont-trunc}
\forall k\in\N^*,\quad  \|\Pc_k  - P\|_{0,1} \leq \frac{K}{V(k)} \qquad \text{with $K:=\max\big(2(\delta+L),1\big)$}.
\end{equation}
\end{enumerate}
\end{alem}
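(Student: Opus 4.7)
The three parts are essentially independent. Assertion (i) is a direct application of Proposition~\ref{pro-qc-bis}: the hypothesis gives (WD), so it suffices to exhibit $\ell \geq 1$ such that $P^\ell : \cB_0 \r \cB_1$ is compact. Take $\ell = 1$. Compactness of $P : \cB_0 \r \cB_1$ under $\lim_k V(k) = +\infty$ is the standard fact already invoked in the introduction; the short argument is to extract from a bounded sequence $\{f_n\} \subset \cB_0$ a pointwise convergent subsequence $f_{n_j} \to f$ by diagonal extraction, then bound $\|P f_{n_j} - Pf\|_1$ by splitting the supremum over $i$ at some large $N$: for $i \geq N$ use $V(i)^{-1} \leq V(N)^{-1}$ together with $\|P(f_{n_j}-f)\|_0 \leq 2\sup_n \|f_n\|_0$, and for $i < N$ apply dominated convergence on the finite index set. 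Proposition~\ref{pro-qc-bis} then delivers power-boundedness, quasi-compactness, and $r_{ess}(P) \leq \delta$ at once.

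For (ii), the case $k = \infty$ is just the standing hypothesis. For finite $k$ and $i \leq k$, unpack the definition of $\Pc_k$:
\[
(\Pc_k V)(i) \ =\ \sum_{j=0}^{k-1} P(i,j) V(j) \ +\ V(k) \sum_{\ell \geq k} P(i,\ell).
\]
Since $V$ is nondecreasing, $V(k) \leq V(\ell)$ for $\ell \geq k$, so the right-hand side is at most $(PV)(i) \leq \delta V(i) + L$ by (WD). For $i > k$ the left-hand side is $0$, and the inequality is trivial.

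For (iii), fix $f \in \cB_0$ with $\|f\|_0 \leq 1$ and split the sup defining $\|(\Pc_k - P)f\|_1$ according to whether $i \leq k$ or $i > k$. For $i \leq k$, the definition of $\Pc_k$ gives
\[
\bigl((P - \Pc_k) f\bigr)(i) \ =\ \sum_{\ell > k} P(i,\ell)\bigl(f(\ell) - f(k)\bigr),
\]
whose absolute value is at most $2 \sum_{\ell > k} P(i,\ell)$. Using $V(\ell) \geq V(k+1)$ for $\ell > k$ together with (WD) bounds this tail by $V(k+1)^{-1}(\delta V(i) + L)$, so dividing by $V(i) \geq 1$ yields a bound of $2(\delta + L)/V(k+1)$. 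For $i > k$, $(\Pc_k f)(i) = 0$ and $V(i)^{-1} |(Pf)(i)| \leq V(k+1)^{-1}$. Combining the two regimes and using $V(k+1) \geq V(k)$ produces the claimed estimate with $K = \max\bigl(2(\delta+L),1\bigr)$.

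The only step with any real content is the compactness claim used in (i); once that is cited from the introduction (or verified by the diagonal argument sketched above), (ii) and (iii) are short calculations whose only subtlety lies in correctly identifying the ``last-column'' mass $V(k) \sum_{\ell \geq k} P(i,\ell)$ in the definition of $\Pc_k$ and in exploiting monotonicity of $V$ at the splitting index.
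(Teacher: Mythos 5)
Your proof is correct and follows essentially the same route as the paper: part (i) rests on Proposition~\ref{pro-qc-bis} plus a Cantor diagonal-extraction compactness argument (the paper phrases this as compactness of the injection $\cB_0\hookrightarrow\cB_1$ and composes with the bounded map $P:\cB_0\r\cB_0$, whereas you verify compactness of $P:\cB_0\r\cB_1$ directly, which is why you need the extra dominated-convergence step — the two are equivalent), and parts (ii) and (iii) are the same splitting computations as in the paper, with your $\sum_{\ell>k}$ versus the paper's $\sum_{\ell\geq k}$ differing only by a vanishing term. No gaps.
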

If the stochastic matrix $P_k$ has an invariant probability measure $\pi_k$, then $\widehat\pi_k$ is the  probability measure on $\N$ defined by 
$$\forall j\in\N,\quad \widehat\pi_k(\{j\}) := \begin{cases}
\pi_k(\{j\}) & \text{ if $j\in B_k$} \\
\quad 0 & \text{ if $j\notin B_k$.}
\end{cases} 
$$
\begin{arem} \label{Annexe_Continuity}
Assume that $\limsup_{i} (PV)(i)/V(i) >0$ and that $P$ satisfies \emph{(\ref{drift-gene-cond-gene-P})}, so that the infimum of $\delta$ such that \emph{(\ref{drift-gene-cond-gene-P})} holds is non zero (these assumptions are satisfied in almost all $V$-geometrically ergodic models). Then the strong convergence property $\lim_k\|P - \Pc_k\|_{1}=0$ required in the standard perturbation theory does not hold. Indeed, using $\Pc_k V(i)=0$ when $i\notin B_k$,  we obtain  
$$\sup_{i\notin B_k}  \frac{(PV)(i)}{V(i)} = \sup_{i \notin B_k}  \frac{|(PV)(i)-(\Pc_kV)(i)|}{V(i)} \leq  \| PV - \Pc_k V\|_1  \le  \|P - \widehat P_k\|_{1}.$$
If $\lim_k\|P - \Pc_k\|_{1}=0$, then $\lim_k\sup_{i\notin B_k}  (PV)(i)/V(i)=0$ which cannot hold from the hypothesis. 
Actually using this strong convergence condition leads to difficulties or restrictions in other approximation questions. For instance in \cite{KonMey05}, some iterate $P^N$ of the Markov kernel $P$ is approached  by  finite rank kernels in operator norm on $\cB_1$ (for other purpose than truncation issue). Thus $P^N$ is compact on $\cB_1$.  Consequently  this property in operator norm $\|\cdot\|_1$ leads to suppose that $r_{ess}(P)=0$ which is restrictive for $V$-geometrically ergodic kernel $P$ since $r_{ess}(P)<1$ but $r_{ess}(P)\neq 0$ in general. 
\end{arem}
\begin{proof}{ of Lemma~\ref{lem-gene-trunc}}
Assertion $(i)$ follows from Proposition~\ref{pro-qc-bis} since the injection from $\cB_0$ into $\cB_1$ is compact from Cantor's diagonal argument and $\lim_j V(j) = +\infty$. Next let $i \in B_k$. Then we obtain using $V(j) \geq V(k)$ for $j\geq k$ 
\begin{eqnarray*}
(\Pc_kV)(i)  & = & \sum_{j=0}^{k-1} P(i,j)V(j) +  V(k) \sum_{j\geq k}  P(i,j) \, \leq\,  PV(i) 
\end{eqnarray*}
so $(\Pc_kV)(i) \leq \delta V(i) + L$ from (\ref{drift-gene-cond-gene-P}). If $i\notin B_k$, then $(\Pc_kV)(i) = 0$. This proves $(ii)$. To derive~$(iii)$, consider any  $f\in\cB_{0}$ such that $\|f\|_0\leq1$. First, we obtain for every $i \in B_k$:
\begin{eqnarray*}
\big|(Pf)(i) - (\Pc_kf)(i)\big| &=& \big|  \sum_{j\geq k}  P(i,j)(f(j) - f(k))  \big| \leq 2\, \sum_{j\geq k}  P(i,j)   = 2\, \sum_{j\geq k}  P(i,j) V(j)\frac{1}{V(j)} \\ 
&\leq& \frac{2(PV)(i) }{V(k)} \leq \frac{2(\delta+L)}{V(k)}\, V(i) \quad \text{(since $PV\leq (\delta+L)\, V$ from (\ref{drift-gene-cond-gene-P}))}.
\end{eqnarray*}
Second let $i\notin B_k$. Then $(\Pc_kf)(i)=0$, so that  
$|(Pf)(i) - (\Pc_kf)(i)| \le \sum_{j\in \N} P(i,j)|f(j)|\le 1$. 
This gives (\ref{cont-trunc}) in $(iii)$. 
\end{proof}

To conclude this subsection we present two lemmas. The first one provides a useful bound for the constants $C_k$ in (\ref{Vk-geo-cond}) and  $H_k$  in (\ref{H-r-vartheta-stat}). 
It is convenient to consider $P_k$ as an endomorphism on the finite dimensional space of functions $h : B_k\r\C$ equipped with the following norm (still denoted by $\|\cdot\|_1$ for the sake of simplicity) defined by: $\|h\|_1 := \sup_{i\in B_k}|h(i)|/V(i)$. The associated operator norm is also denoted by $\|\cdot\|_1$. 

\begin{alem} \label{lem-cont-s}
The matrix $\Pc_k$ is assumed to be $V$-geometrically ergodic, that is $1$ is a simple eigenvalue of the stochastic matrix $P_k$ and is the unique eigenvalue of modulus one. Suppose that an explicit upper bound  $\widetilde\rho_k\in(0,1)$ of the second eigenvalue of $P_k$ is known. Let any $\rho_k$ be such that 
\begin{equation} \label{rho-k-proc}
\widetilde\rho_k < \rho_k <1 
\end{equation}
and define $s\equiv s(\rho_k)\in\N^*$ as the smallest integer such that 
\begin{equation} \label{cont-itere}
\|{P_k}^{s} - \pi_k(\cdot)1_{B_k}\|_1 \leq {\rho_k}^{s}.
\end{equation}
Then, we obtain the following estimate 
\begin{eqnarray}
 & \forall n\geq0, & \|{\Pc_k}^{\, n} - \widehat\pi_k(\cdot)1_{B_k}\|_1 
\leq C_k\, {\rho_k}^{n} \leq \overline{C}_k\, {\rho_k}^{ n} \nonumber \\[1mm]
&& \text{with }\ C_k := \frac{\max_{0\leq r \leq s-1}\|{P_k}^{r} - \pi_k(\cdot)1_{B_k}\|_1}{{\rho_k}^{s-1}} \quad \overline{C}_k := \frac{1-\delta+2L}{(1-\delta)\rho_k^{\, s-1}}. \label{Ck-proc}
\end{eqnarray}
Moreover, for any $(r_k,\vartheta)\in(0,1)^2$ such that $\rho_k+\vartheta < r_k < 1-\vartheta$,  the following bounds hold for $H_k$ defined in \emph{(\ref{H-r-vartheta-stat})}:  
\begin{equation} \label{Hk-proc1}
H_k 
\ \leq \ \overline{H}_k := \max\left(\frac{L+ C_k(1-\delta)}{\vartheta(1-\delta)}\, ,\, \frac{1}{\vartheta}\right)  \ \leq \ {\overline{H}_k}' := \max\left(\frac{L+ \overline{C}_k(1-\delta)}{\vartheta(1-\delta)}\, ,\, \frac{1}{\vartheta}\right)  . 
\end{equation}
\end{alem}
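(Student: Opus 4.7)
The plan is to exploit the spectral splitting $\Pc_k = E_k + \widehat{Q}_k$, where $E_k f := \widehat\pi_k(f)\, 1_{B_k}$ is the rank-one projection onto the eigenspace of $\Pc_k$ for the eigenvalue~$1$ and $\widehat{Q}_k := \Pc_k - E_k$. The elementary identities $\Pc_k 1_{B_k} = 1_{B_k}$ (since $P_k$ is stochastic on $B_k$ and $\Pc_k$ vanishes off ${B_k}^2$), $\widehat\pi_k \circ \Pc_k = \widehat\pi_k$ and $\widehat\pi_k(1_{B_k}) = 1$ give $\Pc_k E_k = E_k \Pc_k = E_k$ and $E_k^2 = E_k$, whence the orthogonality $E_k \widehat Q_k = \widehat Q_k E_k = 0$. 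A one-line induction then produces the key relation
\[ \Pc_k^{\,n} - E_k \ =\ \widehat Q_k^{\,n} \qquad (n\geq 1). \]

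For assertion (a), I would apply a Euclidean division $n = qs + r$ with $0 \leq r \leq s-1$ and submultiplicativity of $\|\cdot\|_1$. The minimality of $s$ in (\ref{cont-itere}) gives $\|\widehat Q_k^{\,s}\|_1 \leq \rho_k^{\,s}$, while the remainder factor $\|\widehat Q_k^{\,r}\|_1 = \|P_k^{\,r} - \pi_k(\cdot)1_{B_k}\|_1$ is bounded by $M := \max_{0\leq r \leq s-1}\|P_k^{\,r} - \pi_k(\cdot)1_{B_k}\|_1$, producing the estimate $\rho_k^{\,qs} M$; since $\rho_k^{-r}\leq \rho_k^{-(s-1)}$, this rearranges into $C_k\, \rho_k^{\,n}$. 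The coarser estimate $\overline{C}_k$ will then follow by controlling $M$ via the triangle inequality: iterating (\ref{drift-gene-cond-gene}) as in (\ref{itere-borne}) gives $\|P_k^{\,r}\|_1 \leq (1-\delta+L)/(1-\delta)$, while (\ref{maj-pi-V}) yields $\|E_k\|_1 \leq \widehat\pi_k(V) \leq L/(1-\delta)$, so $M \leq (1-\delta+2L)/(1-\delta)$.

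For assertion (b), the orthogonal decomposition produces, for any $z$ with $|z| > \rho_k$ and $z \neq 1$, the resolvent formula
\[ (zI - \Pc_k)^{-1} \ =\ \frac{E_k}{z-1}\, +\, \sum_{n\geq 0} z^{-n-1}\big(\Pc_k^{\,n} - E_k\big), \]
which I will verify by direct multiplication by $zI - \Pc_k$, using $\Pc_k E_k = E_k$ together with a telescoping cancellation in the series (absolute convergence in $\cL(\cB_1)$ is ensured by part~(a), where the bound $\|\Pc_k^{\,n} - E_k\|_1 \leq C_k\rho_k^{\,n}$ holds for every $n\geq 0$, including $n=0$, since $\|I-E_k\|_1 \leq M \leq C_k$). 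Taking operator norms and using that $z \in \cV(r_k,\vartheta,\Pc_k)^c$ forces both $|z-1| > \vartheta$ (as $1\in\sigma(\Pc_k)$) and $|z| - \rho_k > r_k - \rho_k > \vartheta$ by (\ref{cor-cond-r-var-k}), I obtain
\[ \big\|(zI-\Pc_k)^{-1}\big\|_1\ \leq\ \frac{\widehat\pi_k(V)}{\vartheta} + \frac{C_k}{\vartheta}\ \leq\ \frac{L + C_k(1-\delta)}{\vartheta(1-\delta)}, \]
which is the first argument of the max defining $\overline{H}_k$; the $1/\vartheta$ appearing in the max is kept as a safe lower bound. The final inequality in (\ref{Hk-proc1}) is then immediate from $C_k \leq \overline{C}_k$.

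The only mildly delicate point is the clean verification of the resolvent formula (as opposed to a purely formal manipulation) and the observation that the ergodicity estimate $\|\Pc_k^{\,n} - E_k\|_1 \leq C_k\rho_k^{\,n}$ must be extended to $n=0$, which is precisely why $M$ rather than $\rho_k^{\,s-1}M$ is used in the definition of $C_k$. Once these are in place, the remaining work is routine bookkeeping combining the drift condition, Euclidean division, and the triangle inequality.
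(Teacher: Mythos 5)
Your proof is correct, and part (a) is essentially the paper's own argument: the factorization $\Pc_k^{\,n}-E_k=\widehat Q_k^{\,r}\,(\widehat Q_k^{\,s})^{q}$ that you get from the splitting $\Pc_k=E_k+\widehat Q_k$ is exactly the paper's identity $P_k^{\,n}-\pi_k(\cdot)1_{B_k}=\big(P_k^{\,r}-\pi_k(\cdot)1_{B_k}\big)\circ\big(P_k^{\,sm}-\pi_k(\cdot)1_{B_k}\big)$, and your bounds $\|P_k^{\,r}\|_1\le 1+L/(1-\delta)$, $\|E_k\|_1\le\widehat\pi_k(V)\le L/(1-\delta)$ reproduce the paper's derivation of $\overline C_k$. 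For the resolvent bound you take a genuinely different (if closely related) route: the paper bounds the resolvent of the finite matrix $P_k$ via Remark~\ref{sub-rate-H} and then lifts it to $\Pc_k$ on $\cB_1$ through the explicit block formula $(zI-\Pc_k)^{-1}g=(zI_k-P_k)^{-1}g_{B_k}$ on $B_k$ and $g/z$ off $B_k$, which is where the second argument $1/\vartheta$ of the max comes from; you instead expand $(zI-\Pc_k)^{-1}=E_k/(z-1)+\sum_{n\ge0}z^{-n-1}(\Pc_k^{\,n}-E_k)$ directly on $\cB_1$, which is legitimate because part (a) already gives $\|\Pc_k^{\,n}-E_k\|_1\le C_k\rho_k^{\,n}$, and the verification by telescoping with $\Pc_k E_k=E_k\Pc_k=E_k$ goes through; since $1\in\sigma(\Pc_k)$ forces $|z-1|>\vartheta$ and $|z|>r_k>\rho_k+\vartheta$, you obtain $H_k\le\big(\widehat\pi_k(V)+C_k\big)/\vartheta$, i.e.\ the first argument of the max alone, which is marginally sharper and a fortiori gives (\ref{Hk-proc1}); in effect you apply the argument of Remark~\ref{sub-rate-H} to $\Pc_k$ itself rather than to $P_k$, at the price of not needing formula (\ref{res-PK-hat-Pk-proc}). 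The only points worth stating explicitly (both easy, and implicit in the paper as well) are the identifications between the $\cB_1$-norm and the finite-dimensional norm: for $n\ge1$ one has $\|\Pc_k^{\,n}-E_k\|_1=\|P_k^{\,n}-\pi_k(\cdot)1_{B_k}\|_1$ because $\Pc_k^{\,n}f$ vanishes off $B_k$, while for the $n=0$ term $\|I-E_k\|_1\le\max\big(1,\|I_k-\pi_k(\cdot)1_{B_k}\|_1\big)$ and the second quantity is $\ge1$ since $I_k-\pi_k(\cdot)1_{B_k}$ is a nonzero idempotent, so your claim $\|I-E_k\|_1\le M\le C_k$ does hold.
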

\begin{proof}{}
Let $n\geq0$. Writing $n=sm+r$ with $r\in\{0,\ldots,s-1\}$ we deduce from (\ref{cont-itere}) that
\begin{eqnarray*}
\|{P_k}^{n} - \pi_k(\cdot)1_{B_k}\|_1 &=& \big\|\big({P_k}^{r} - \pi_k(\cdot)1_{B_k}\big)\circ\,  \big({P_k}^{sm} - \pi_k(\cdot)1_{B_k}\big)\big\|_1  \\
&\leq&  \max_{0\leq r \leq s-1}\|{P_k}^{r} - \pi_k(\cdot)1_{B_k}\|_1\, {\big\|{P_k}^{s} - \pi_k(\cdot)1_{B_k}\big\|_1}^m  \\
&\leq& C_k\, {\rho_k}^{n}. 
\end{eqnarray*}
with $C_k$ given in (\ref{Ck-proc}). Inequality $C_k \leq \overline{C}_k$ follows from 
$$\forall r=0,\ldots,s-1,\quad \|{P_k}^{r} - \pi_k(\cdot)1_{B_k}\|_1 \leq \|{P_k}^{r}\|_1 +  \|\pi_k(\cdot)1_{B_k}\|_1 \leq 1+\frac{2L}{1-\delta}$$
since Lemma~\ref{lem-gene-trunc}$(ii)$ gives: first ${P_k}^{r}V_{\mid B_k} \leq \delta^rV_{\mid B_k} + L(1-\delta^r)/(1-\delta)$, thus $\|{P_k}^{r}\|_1\leq 1 + L/(1-\delta)$; second  
$\pi_k(V_{\mid B_k}) \leq L/(1-\delta)$ (see (\ref{maj-pi-V})). The first assertion of Lemma~\ref{lem-cont-s} is proved. 

Now let $(r,\vartheta)\in(0,1)^2$ be such that $\rho_k+\vartheta < r_k < 1-\vartheta$. Since  $r_k > \rho_k$, we obtain 
$${\cal V}\big(r_k,\vartheta,P_k\big)^c = \overline{D}(0,r_k)^c \cap \overline{D}(1,\vartheta)^c.$$
Then Remark~\ref{sub-rate-H} applied to $P_k$ (with rate $\rho_k$ and bound $C_k$) gives 
\begin{equation}  
\sup_{z\in{\cal V}(r_k,\vartheta,P_k)^c} \|(zI-P_k)^{-1}\|_1  =  \sup_{z\in \overline{D}(0,r_k)^c \cap \overline{D}(1,\vartheta)^c} \|(zI-P_k)^{-1}\|_1 \le \frac{\pi_k(V_{\mid B_k})+C_k}{\vartheta}. \label{res-Pk-proc}
\end{equation} 
Moreover observe that we have for any $z\in\C^*\setminus\sigma(P_k)$, 
\begin{equation} \label{res-PK-hat-Pk-proc}
\forall g\in\cB_1,\ \forall i\in\N,\quad \big((zI - {\Pc_k})^{-1} g\big)(i) := \left \{
    \begin{array}{lll}
     \big((zI_k - P_k)^{-1}g_{_{B_k}}\big)(i) &\text{if $i \in B_k$} \\
      g(i)/z & \text{if $i\notin B_k$}.  
    \end{array}
    \right. 
\end{equation}
If $z\in{\cal V}(r_k,\vartheta,\Pc_k)^c$, then $|z| > r_k > \vartheta$, so that  (\ref{res-Pk-proc}) and (\ref{res-PK-hat-Pk-proc}) give 
$$H_k = \sup_{z\in{\cal V}(r_k,\vartheta,\Pc_k)^c} \|(zI-\Pc_k)^{-1}\|_1 \leq \frac{\max\big(\pi_k(V_{B_k})+C_k\, ,\, 1\big)}{\vartheta}.$$
Then the bounds on $H_k$  in (\ref{Hk-proc1}) are deduced from (\ref{maj-pi-V}) and $C_k \leq \overline{C}_k$. 
\end{proof}

The results for truncation of discrete Markov kernels on $\X:=\N$ deduced from Theorem~\ref{cor-Vk-to-V}  are gathered in the following lemma. This is the basic material for the algorithm proposed in the next subsection. 
\begin{alem}\label{lem-TV-final} Let $P$ be a $V$-geometrically ergodic Markov kernel on $\N$ with some rate $\rho\in(0,1)$ and satisfying Condition~\emph{(\ref{drift-gene-cond-gene-P})} with parameters $(\delta,L)$. 
The $k$-th truncated matrix $P_k$ is supposed to satisfy the assumptions of Lemma~\ref{lem-cont-s}. Pick $\rho_k$ as in   \emph{(\ref{rho-k-proc})}. Let $\vartheta$ be such that 
\begin{equation} \label{vartheta-th2-bis}
0<\vartheta < \min\bigg(\frac{1-\delta}{3},\frac{1-\rho}{3},\frac{1-\rho_k}{2}\bigg) .
\end{equation} 
Let $r_k \in(0,1)$ be such that $\max(\delta,\rho_k)+\vartheta < r_k  < 1-\vartheta$. If $k$ is such that $V(k) \geq K/\varepsilon_1(k)$ with $\varepsilon_1(k), K$ defined in \emph{(\ref{def-eps-k})} and \emph{(\ref{cont-trunc})} respectively, then we have the following estimates
\begin{subequations}
\begin{eqnarray} 
\forall n\geq 0, \quad \|P^n - \pi(\cdot)1_\N\|_1  &\leq & c_k\, {r_k }^{n+1} \label{unif-bound-rate-ter} \\
\forall n\geq n_K, \quad \| \widehat\pi_n - \pi \|_{TV}\  & \leq & \frac{L\, K}{1-\delta}\left(2c_k  + \frac{A \, \ln V(n)}{\ln({r_k }^{-1})}\,\right)\frac{1}{V(n)} \label{th-TV-ineg-bis-discret} 
\end{eqnarray} 
where $n_K := \min\{n\in\N^*, V(n) \geq K \}$, $A$ is introduced in \emph{(\ref{As-bounded})} and  
$c_k$ in \emph{(\ref{unif-bound-rate-bis})}. 
\end{subequations}
\end{alem}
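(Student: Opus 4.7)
The strategy is to treat Lemma~\ref{lem-TV-final} as the truncation specialisation of Theorem~\ref{cor-Vk-to-V}: the abstract hypotheses of that theorem come for free from Lemma~\ref{lem-gene-trunc}, while the $V(n)$-explicit total variation bound requires re-running the optimisation inside the proof of Proposition~\ref{pro-P-hatP-en3fois} with an iteration exponent tailored to the explicit decay rate of $\Delta_n$.

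First, I would translate the assumptions through Lemma~\ref{lem-gene-trunc}: part~$(i)$ gives $r_{ess}(P)\leq \delta$, so $\hat\alpha:=\max(r_{ess}(P),\delta)=\delta$; part~$(ii)$ supplies (\ref{drift-gene-cond-gene}) with the very same pair $(\delta,L)$; part~$(iii)$ supplies the quantitative (\ref{C01}) in the form $\Delta_n:=\|\Pc_n-P\|_{0,1}\leq K/V(n)$. Next, I would check the remaining hypotheses of Theorem~\ref{cor-Vk-to-V} for the given $k$, $\vartheta$, $r_k$. Since $\hat\alpha=\delta$, condition (\ref{vartheta-th2}) is immediate from (\ref{vartheta-th2-bis}). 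Lemma~\ref{lem-cont-s} certifies that $\Pc_k$ is $V$-geometrically ergodic at rate $\rho_k$, and the third term in the minimum of (\ref{vartheta-th2-bis}) rewrites as $\rho_k<1-2\vartheta$, so $k\in \mathcal{I}_\vartheta$. Inequality (\ref{cor-cond-r-var-k}) is the standing hypothesis on $r_k$. Finally, the assumption $V(k)\geq K/\varepsilon_1(k)$ combined with (\ref{cont-trunc}) yields $\|P-\Pc_k\|_{0,1}\leq K/V(k)\leq \varepsilon_1(k)$, which is precisely (\ref{ineg-p-pk-inv}).

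Theorem~\ref{cor-Vk-to-V}(a) then produces (\ref{unif-bound-rate-ter}) verbatim, with exactly the same constant $c_k$. For (\ref{th-TV-ineg-bis-discret}) I would bypass the already optimised inequality (\ref{th-TV-ineg-bis}) — which was optimised for an abstract $\Delta_n$ and so introduces the awkward factor $\Delta_n|\ln\Delta_n|$ — and re-enter the proof of Proposition~\ref{pro-P-hatP-en3fois} at its pre-optimisation step. Exchanging there the roles of $P$ and $\Pc_k$, and invoking the rate-constant pair $(r_k,c_k)$ just produced by (\ref{unif-bound-rate-ter}), one obtains for every integer $m\geq 1$
\[
\|\widehat\pi_n-\pi\|_{TV}\ \leq\ |1-\widehat\pi_n(1_\N)|\ +\ \frac{L}{1-\delta}\bigl(2c_k\, r_k^{\,m}+m A \Delta_n\bigr).
\]
In the discrete truncation setting the first term vanishes, since $\pi_n$ is a probability measure on $B_n$ and $\widehat\pi_n(1_\N)=\pi_n(B_n)=1$. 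Substituting $\Delta_n\leq K/V(n)$ and choosing $m$ of order $\ln V(n)/\ln r_k^{-1}$ — concretely $m:=\lceil \ln V(n)/\ln r_k^{-1}\rceil$, so that $r_k^{\,m}\leq 1/V(n)$ and $m\leq 1+\ln V(n)/\ln r_k^{-1}$ — balances the two terms at the common order $K/V(n)$; using $K\geq 1$, this yields (\ref{th-TV-ineg-bis-discret}) after elementary bookkeeping of constants.

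The main obstacle is precisely this last optimisation step. One must not feed $\Delta_n\leq K/V(n)$ blindly into (\ref{th-TV-ineg-bis}): the $\Delta_n|\ln\Delta_n|$ produced by the abstract optimisation does not behave monotonically in $\Delta_n$ on $(0,1]$, so the naive substitution does not give a clean $\ln V(n)/V(n)$ decay. Optimising the iteration exponent $m$ directly against the known polynomial rate $K/V(n)$ of $\Delta_n$ — rather than against $\Delta_n$ itself — is what extracts the factor $\ln V(n)$ in the cleanest form and produces the sharp bound (\ref{th-TV-ineg-bis-discret}).
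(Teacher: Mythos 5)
Your reduction of the lemma to Theorem~\ref{cor-Vk-to-V} is exactly the paper's: Lemma~\ref{lem-gene-trunc} gives $\hat\alpha=\delta$, the uniform drift (\ref{drift-gene-cond-gene}) and the quantitative bound $\Delta_n\le K/V(n)$; Lemma~\ref{lem-cont-s} certifies $k\in{\cal I}_\vartheta$; and $V(k)\ge K/\varepsilon_1(k)$ yields (\ref{ineg-p-pk-inv}), so assertion (a) gives (\ref{unif-bound-rate-ter}) verbatim. Where you diverge is the total variation estimate, and two points need correcting. First, your stated reason for bypassing (\ref{th-TV-ineg-bis}) is not valid: the paper does substitute $\Delta_n\le K/V(n)$ into (\ref{th-TV-ineg-bis}), and the non-monotonicity of $x\mapsto x|\ln x|$ is handled by a \emph{lower} bound on $\Delta_n$, namely $1\ge\Delta_n\ge\|(P-\Pc_n)1_\N\|_1=\|1_{\N\setminus B_n}\|_1$ for $n\ge n_K$, whence $|\ln\Delta_n|\le\ln V(n)$; this is precisely what the hypothesis $n\ge n_K$ is for, and it produces (\ref{th-TV-ineg-bis-discret}) in one line (together with $\widehat\pi_n(1_\N)=1$). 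Your alternative, re-entering the proof of Proposition~\ref{pro-P-hatP-en3fois} with the pair $(r_k,c_kr_k)$ and optimising the exponent $m$ against $\ln V(n)$ rather than against $|\ln\Delta_n|$, is perfectly legitimate and even has the small advantage of needing no lower bound on $\Delta_n$ at all.

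Second, however, your bookkeeping as written does not deliver the constants of (\ref{th-TV-ineg-bis-discret}). With $m=\lceil\ln V(n)/\ln r_k^{-1}\rceil$ and the relaxation $2c_kr_k^{\,m+1}\le 2c_kr_k^{\,m}$ you obtain the bound $\frac{L}{1-\delta}\big(2c_k+AK+\frac{AK\ln V(n)}{\ln r_k^{-1}}\big)\frac{1}{V(n)}$, which is dominated by the right-hand side of (\ref{th-TV-ineg-bis-discret}) only when $AK\le 2(K-1)c_k$; since $K=\max(2(\delta+L),1)$ may equal $1$, the extra additive term $AK$ cannot always be absorbed, so the claimed inequality is not quite reached. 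The fix is immediate: take $m:=\lfloor\ln V(n)/\ln r_k^{-1}\rfloor$, which is nonnegative because $V(n)\ge K\ge1$, and keep the exponent $m+1$ coming from $2C\rho^m$ with $C=c_kr_k$, $\rho=r_k$; then $2c_kr_k^{\,m+1}\le 2c_k/V(n)\le 2Kc_k/V(n)$ and $mA\Delta_n\le\frac{A\ln V(n)}{\ln r_k^{-1}}\cdot\frac{K}{V(n)}$, which gives (\ref{th-TV-ineg-bis-discret}) exactly. With that adjustment your argument is a correct, essentially equivalent variant of the paper's proof.
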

To derive the final form (\ref{th-TV-ineg-bis-discret}) of (\ref{th-TV-ineg-bis}), note that, for $n\geq n_K$, we have $1\ge \Delta_n\ge \|P 1_\N - \Pc_n 1_{\N} \|_1 = \| 1_{\N \backslash B_n} \|_1 = 1/V(n)$ so that $|\ln \Delta_n | \le \ln V(n)$.

Recall that, if no rate $\rho$ in (\ref{V-geo-cond}) is known, then Lemma~\ref{lem-TV-final} applies provided that $k\geq k_0$, with $k_0$ defined as in (\ref{con-k0-r-var}) where $\varepsilon_0$ is replaced by $\varepsilon_0(k)$ given as in (\ref{def-eps-0-app}) using $(r_k,H_k)$ in place of $(r,H)$ (see the comments after Theorem 4.1).  In this case, delete $(1-\rho)/3$ in (\ref{vartheta-th2-bis}).

 Property~(\ref{th-TV-ineg-bis-discret}) provides a bound $\|\widehat\pi_n - \pi \|_{TV} = O(\ln(V(n))/V(n))$ with computable constants since $r_k$ and $c_k$ are deduced from calculations involving the matrix $\Pc_k$ for some $k$. Consequently, given some error term $\varepsilon>0$, this bound can be used to find an integer $n\equiv n(\varepsilon)\geq1$ such that $\|\widehat\pi_n - \pi \|_{TV}\leq \varepsilon$ (see Table~\ref{Ex_borne_Pn}-(b) for an illustration). However, as mentioned in Remark~\ref{effectiv-var-total}, using the direct  Inequality~(\ref{rem-spec-n-k-bis}) is the best way to estimate the total variation distance between $\pi$ and $\widehat\pi_k$. This point is illustrated in Subsection~\ref{subsec-num-ex} (see Table~\ref{Ex_borne_pi}). 
To specify~(\ref{rem-spec-n-k-bis}), observe that the first term of the bounds satisfies from $\phi_k:= 1_{B_k}$ and (\ref{maj-pi-V}) 
\begin{equation*} 
\widehat\pi_k(1_\N)\, |1-\pi(\phi_k)|\leq \sum_{j\notin B_k} \pi(j) \leq \frac{1}{V(k)}\sum_{j\notin B_k} \pi(j)V(j) \leq \frac{\pi(V)}{V(k)} \leq \frac{L}{(1-\delta)V(k)}
\end{equation*}
so that Inequality~(\ref{rem-spec-n-k-bis}) reads as follows 
\begin{equation} \label{rem-spec-n-k-bis-discret}
\| \widehat\pi_k - \pi \|_{TV}\  \leq \  \frac{L}{1-\delta}\bigg(1+ \frac{2 K C_k}{\rho_k} + \frac{A K }{\ln({\rho_k}^{\, -1})}\, \ln V(k)\bigg)\frac{1}{V(k)} .
\end{equation}
%

\subsection{Algorithm for assessing the bounds} \label{subsec-ex} 

 Let $P$ be a $V$-geometrically ergodic Markov kernel on $\X:=\N$ with some rate $\rho\in(0,1)$ and satisfying Condition~(\ref{drift-gene-cond-gene-P}) with parameters $(\delta,L)$. Assume that, for any $k\ge 1$, $1$ is a simple eigenvalue of the stochastic matrix $P_k$ and is the unique eigenvalue of modulus one.
Lemma~\ref{lem-TV-final} is used to propose the following generic algorithm which allows us to assess the bounds (\ref{unif-bound-rate-bis}) and (\ref{th-TV-ineg-bis}) provided by Theorem~\ref{cor-Vk-to-V} for $\|P^n -\pi(\cdot)1_{\N} \|_1$ and  $\|\pi-\widehat\pi_k\|_{TV}$ respectively. Numerical illustrations are proposed in the next subsection.

First compute the constants in (\ref{As-bounded}), (\ref{th-TV-ineg-bis-discret}), (\ref{def-hat-alpha}) and  (\ref{doeb-fortet-dual-itere}):  
$$A:= 1+ \frac{L}{1-\delta} \quad K := \max\big(2(\delta+L),1\big)  \quad\hat\alpha=\max(\delta,r_{ess}(P))=\delta,\quad B:= \frac{L}{1-\hat\alpha} .$$
\leftmargini=1.5em
\begin{enumerate} \setlength{\itemsep}{0mm}
  \item \label{step1}Pick a (small) $k\in\N^*$. 
	\item \label{step2} Compute the second eigenvalue $\widetilde\rho_k$ of $P_k$. 
	\item \label{step3} Pick $\rho_k \in (\max(\hat\alpha,\widetilde\rho_k),1)$.
	\item \label{step4} Compute the invariant probability measure $\pi_k$ of $P_k$ and the matrix $G_k := P_k - \pi_k(\cdot)1_{B_k}$. 
	\item \label{step5} Compute $s:=\inf\{n\in \N^* : \|{G_k}^{n}\|_1 \leq {\rho_k}^{n}\}$ (see  (\ref{cont-itere})).
  \item \label{step6} Compute  $C_k := \max\{\,\|{G_k}^r\|_1/{\rho_k}^{s-1}: 0\leq r \leq s-1\,\}$.
  	\item \textbf{Compute the bound for $\|\pi-\widehat\pi_k\|_{TV}$ in (\ref{rem-spec-n-k-bis-discret})}.
	\item \label{step7} Pick $\vartheta$ and $r_k$ such that 
$0<\vartheta < \min\left( \frac{1-\rho_k}{2},\frac{1-\rho}{3},\frac{1-\delta}{3}\right)$ and $\rho_k + \vartheta < r_k < 1-\vartheta$.
  \item \label{step8} Compute $ n_1(k) := \left\lfloor \displaystyle \frac{\ln2}{\ln(r_k/\delta)}\right\rfloor + 1,  n_2(k) := \bigg\lfloor \frac{\ln\big(8B(B+3)\,  {r_k}^{-n_1(k)} H_k\big)}{\ln(r_k/\delta)}\bigg\rfloor + 1$, $H_k := \frac{L+ C_k(1-\delta)}{\vartheta(1-\delta)}$ and  $$\varepsilon_1(k):=\frac{{r_k}^{n_1(k)+n_2(k)}}{8B(H_kB+(1-r_k)^{-1})} \qquad k_1\equiv k_1(k):=\min\big\{n: V(n)\ge K/\varepsilon_1(k)\big\}.$$
  \item \label{step9} Check Condition~(\ref{ineg-p-pk-inv}), that is:
\begin{enumerate}
	\item[] if $k < k_1$ then go to Step~\ref{step1} setting $k:=k+1$; 
		
 \textbf{else compute constant $c_k$ in (\ref{unif-bound-rate-bis}) and use bounds in (\ref{unif-bound-rate-ter})-(\ref{th-TV-ineg-bis-discret}) with $\boldsymbol{(c_k,r_k)}$}.
\end{enumerate}
\end{enumerate}
You can use $\overline{C}_k$ in (\ref{Ck-proc}) in place of $C_k$ in Step~\ref{step6}, so that $C_k,H_k$ are replaced by $\overline{C}_k,{\overline{H}_k}'$ in the sequel of the algorithm.
\subsection{A numerical example} \label{subsec-num-ex}

Let us consider an instance of random walk on $\X:=\N$ with identically distributed bounded increments, that is a Markov chain with transition kernel $P$ defined, for some $c,g,d\in\N^*$, by  
\begin{gather*}
\forall i\in\{0,\ldots,g-1\},\quad \sum_{j=0}^c P(i,j)=1; \label{Def_Bord_NHRW} \\
\forall i\ge g, \forall j\in\N, \quad P(i,j) = 
\begin{cases}
 a_{j-i} & \text{ if }\  i-g\leq j \leq i+d \\
 0 & \text{ if not}  \\
\end{cases} 
\end{gather*} 
where $(a_{-g},\ldots,a_d)\in[0,1]^{g+d+1}$ satisfies $\sum_{k=-g}^{d} a_k=1$ for all $i\ge g$. This kind of kernels arises, for instance, from time-discretization of Markovian queuing models. Under the negative mean increment condition 
\begin{equation*} 
\sum_{k=-g}^{d} k\, a_{k}\, < 0 
\end{equation*}
the following properties are known:
\begin{enumerate}
\item there exists $\gc>1$ such that $\phi(\gc):=\sum_{k=-g}^{d} a_k \gc^k = \min_{\gamma >1} \phi(\gamma)<1$. Let $V \equiv \{\gc^n\}_{n\in\N}$.
\item $P$ satisfies (\ref{drift-gene-cond-gene-P}) with $\delta = \phi(\gc)$ and $L=\max(0,\sum_{j=0}^{c}P(i,j)\gc^{j}-\phi(\gc):i=1,\ldots,g-1)$. Moreover,  $r_{ess}(P)=\delta$ (see \cite{GuiHerLed12}).
	\item $P$ is $V$-geometrically ergodic with an invariant probability measure $\pi$ such that $\pi(V)<\infty$.
\end{enumerate}
A general procedure to estimate the infimum bound $\rho_V(P)$ of the rates $\rho$  in Inequality~(\ref{V-geo-cond}) for such models is given in \cite{GuiHerLed12}. But the assessment of the constant is not addressed. The next tables give such constants as well as explicit bounds for the total variation distance $\|\pi - \widehat\pi_k\|_{TV}$. To the best of our knowledge, such results are not known for this kind of models. 

In the present context, the integer $k_1$ of Step~\ref{step8} has the following form 
 \begin{equation*} 
 k_1:=\left\lfloor\frac{1}{\ln\gc}\bigg(\ln\big(8BK\big) + \ln\big(H_k B + (1-r_k)^{-1}\big) +  \big(n_1(k)+n_2(k)\big)\ln(r_k^{\, -1})\bigg)\right\rfloor + 1.
\end{equation*}

As a matter of example, we take $c:=2, g:=2, d:=1$ with $a_{-2},a_1>0$ and boundary probabilities   
\begin{gather*}
P(0,0) = a \in(0,1),\quad P(0,1) = 1-a,\quad P(1,0) = b\in(0,1), \quad P(1,2) = 1-b. \label{bound-deux-vois} 
\end{gather*}
It is easily seen that $P$ and $\{P_k\}_{k\ge 1}$ are irreducible and aperiodic. Since $P(2,0)=a_{-2}>0$ and $P(0,2)=0$, $P$ is not reversible. 
The form of boundary probabilities is chosen for convenience. Other (finitely many) boundary probabilities could be considered provided that $P$ and $P_k$ $k\ge 1$ are irreducible and aperiodic. In order to provide numerical evidence for the effectiveness of the algorithm, we also specify the values 
$$a_{-2} := 1/2,\ a_{-1} := 1/3,\ a_0 := 0,\ a_1 := 1/6.$$ 
We obtain $\gc \approx 2.18$ and $\delta \approx 0.621$. Moreover, using the procedure in \cite{GuiHerLed12}, we can obtain a numerical approximation of $\rho_V(P)$, so that any bound on this estimate gives the required initial value of $\rho$ in (\ref{V-geo-cond}) to use Theorem~\ref{cor-Vk-to-V}. 
Note that $P$ is stochastically monotone iff $a\ge b\ge5/6$.  

We report in Table~\ref{Ex_borne_pi} and Table~\ref{Ex_borne_Pn}-(a) the value of the different bounds (\ref{unif-bound-rate-ter})-(\ref{th-TV-ineg-bis-discret}) for $(a,b):=(1/2,1/2)$ which gives a kernel $P$ which has no eigenvalue in the annulus $\{z\in\C : \delta< |z|<1\}$ so that $\rho_V(P)=\delta\approx 0.621$ (see \cite{GuiHerLed12} for details). $P$ is not reversible and not stochastically monotone. Therefore the results of \cite{Twe98,RobTwe99,Bax05} are not relevant since the bound for $\|\widehat\pi_k   -  \pi\|_{TV}$ in \cite{Twe98} is only obtained  for stochastically monotone Markov kernels, and  the rate $\rho$ in (\ref{V-geo-cond}) obtained in \cite{RobTwe99,Bax05} is very close to $1$ (and far from the best rate) in the non reversible case. We also report in Table~\ref{Ex_borne_pi} the results obtained from the direct Inequality~(\ref{rem-spec-n-k-bis-discret}). 
These results are in agreement with the expected fact that Inequality~(\ref{rem-spec-n-k-bis-discret}) gives better bounds for $\|\widehat\pi_k -\pi\|_{TV}$ than (\ref{th-TV-ineg-bis-discret}). We also report in Table~\ref{Ex_borne_Pn}-(b) the value of $n(\varepsilon)$ such that 
 $\|\widehat\pi_n -\pi\|_{TV}\le \varepsilon$ using (\ref{th-TV-ineg-bis-discret}).
\begin{arem}
Note that the assumptions for using the algorithm (see Lemma~\ref{lem-TV-final}) are not sensitive to approximate values of $\gc$ and $\delta$ since properties as $r_{ess}(P)=\delta=\phi(\gamma)<1$ and $V$-geometric ergodicity are valid for any $\gamma \in (1,\gamma_0)$ with $\gamma_0=5.541$.
\end{arem}
\begin{table}[ht]
\centering
\setlength{\extrarowheight}{2pt}
{\small
\begin{tabular}{r||c|c|c|}  \hline 
 \multicolumn{1}{c}{} & \multicolumn{3}{c}{$\boldsymbol{(a,b)=(1/2,1/2)}$}    \\ \hline\hline          
 $\boldsymbol{r_{45}}$  & 0.87 &   0.78        &  0.76   \\\hline
 $\boldsymbol{c_{45}}$ & $1.924\times 10^7$ & $4.610\times 10^{11}$ & $1.348\times 10^{14}$   \\\hline
 \end{tabular}
\caption{Improvement of the rate vs degradation of the constant using Theorem~\ref{cor-Vk-to-V}}
\label{Ex-rk-ck}
}
\end{table}

\begin{table}[ht]
\setlength{\extrarowheight}{2pt}
\centering
{\scriptsize 
\begin{tabular}{>{\bfseries}r||c|c|c||c|c||c|c|c|c|c| }  \hline 
 \multicolumn{1}{c}{}    & \multicolumn{9}{c}{$\boldsymbol{(a,b)=(1/2,1/2)}$} \\ \hline 
 $\boldsymbol{k}$ & $|\widetilde\rho_k|$  & $\rho_k$  & $s$  & $C_k$ & \textbf{(\ref{rem-spec-n-k-bis-discret})}  & $\vartheta$  &$r_k$ & $k_1$ & $c_k$  &  {\bf (\ref{th-TV-ineg-bis-discret}) for  $\boldsymbol{n:=k}$}                         \\\hline\hline
    15 & 0.6018 & 0.75 & 4 & 4.1539 & $\boldsymbol{8.44 \times 10^{-2}}$ & 0.09 & 0.9 & 20 &  &  \\\hline
   25 & 0.6142 & 0.75 & 4 & 4.1540 & $\boldsymbol{5.712 \times 10^{-5}}$ & 0.09 & 0.9 & 20 & $\boldsymbol{4.715 \times 10^{5}}$ & $\boldsymbol{1.112 \times 10^{-1}}$ \\\hline
  35 & 0.6177 & 0.75 & 4 & 4.1540 & $\boldsymbol{3.277 \times 10^{-8}}$ & 0.09 & 0.9 & 20 & $\boldsymbol{4.715 \times 10^{5}}$ & $\boldsymbol{4.616 \times 10^{-5}}$ \\\hline
 45 & 0.6192 & 0.75 & 4 & 4.3736 & $\boldsymbol{1.733 \times 10^{-11}}$ & 0.09 & 0.9 & 20 & $\boldsymbol{4.816 \times 10^{5}}$ & $\boldsymbol{1.946 \times 10^{-8}}$ \\\hline
\end{tabular}
\caption{Bounds for $\|\pi-\pi_k\|_{TV}$ using (\ref{th-TV-ineg-bis-discret}) /(\ref{rem-spec-n-k-bis-discret})}
\label{Ex_borne_pi}
}
\end{table}
\begin{table}[ht]
\setlength{\extrarowheight}{2pt}
\centering
{\scriptsize 
\begin{tabular}{r||c|c|}  \hline 
  \multicolumn{1}{c}{} & \multicolumn{2}{c|}{$\boldsymbol{(a,b)=(1/2,1/2)}$ with $r_k=0.925$} \\\hline  \hline  
 $\boldsymbol{k}$ & $c_k$ & Bound  (\ref{unif-bound-rate-ter}) \\\hline
 30 & $1.675\times 10^{6}$ & $1.497\times 10^{-7}$ \\\hline
 50 & $6.533\times 10^{6}$ &  $5.839\times 10^{-7}$ \\\hline  
 \multicolumn{3}{c}{(a)}
\end{tabular}
\qquad\qquad \begin{tabular}{rc|c|c|}  \hline 
  \multicolumn{4}{c}{$\boldsymbol{(a,b)=(1/2,1/2)}$ with $r_{35}:=0.9$}   \\ \hline\hline          
 \multicolumn{1}{r|}{$\boldsymbol{\varepsilon}$}  & $10^{-2}$ & $10^{-4}$          &  $10^{-6}$ \\\hline\hline
 \multicolumn{1}{r|}{$\boldsymbol{n(\varepsilon)}$} & 28 & 34 & 40  \\\hline \\
 \multicolumn{4}{c}{(b)}
 \end{tabular}
\caption{(a): bound (\ref{unif-bound-rate-ter}) for $\|P^{300} -\pi(\cdot)1_{\N} \|_1$ and (b): $\|\pi_{n(\varepsilon)} -\pi \|_{TV} \le \varepsilon$ from (\ref{th-TV-ineg-bis-discret})}
\label{Ex_borne_Pn}
}
\end{table}
%

\section{Complements on Theorems~\ref{pro-bv} and \ref{cor-Vk-to-V}} \label{WP_truncation} 

In this section, we assume that $P$ is quasi-compact on $\cB_1$ and that  Conditions~(\ref{C01}) and (\ref{drift-gene-cond-gene}) hold. We use the second part of \cite[Prop.~3.1]{Liv01} and the spectral rank-stability property \cite[Cor.~3.1]{Liv01} to prove the two following properties: 
\leftmargini 1.3em
\begin{itemize}
	\item  if P satisfies (\ref{V-geo-cond}), then $\Pc_k$ is $V$-geometrically ergodic with explicit rates for $k$ large enough. See Proposition~\ref{cor-bv-app}. 
	\item  There exists an integer $\tilde k$ such that, for every $k\geq \tilde k$, the condition $\|P - \Pc_k\|_{0,1} \leq \varepsilon_1(k)$ holds, where $\varepsilon_1(k)$ is defined in (\ref{def-eps-k}). See Proposition~\ref{th-Vk-to-V}. 
\end{itemize} 
As discussed in Sections~\ref{sec-V-to-Vk} and \ref{sec-Vk-to-V}, the previous properties may be valid for $k$ much smaller than the integers $k_0$ and $\tilde k$ provided in this section. It is the reason why these two integers are not introduced in Theorems~\ref{pro-bv} and \ref{cor-Vk-to-V}. However, since the $V_k$-geometric  ergodicity of $P_k$ is required for some $k$ in these two theorems, as well as the condition $\|P - \Pc_k\|_{0,1} \leq \varepsilon_1(k)$ in the second theorem, obtaining the theoretical existence of $k_0$ and $\tilde k$ is important.  

\subsection{Complements on Theorem~\ref{pro-bv}} \label{app-proof} 
We use the notations introduced for Theorem~\ref{pro-bv}. For any $r\in(0,1)$, set $\eta \equiv \eta(r) :=1-\ln r/\ln \hat\alpha \in(0,1)$, and with  $\varepsilon_1 \equiv \varepsilon_1(r,\vartheta,P)$ defined in~(\ref{def-eps-1})
\begin{gather} 
\varepsilon_2 \equiv \varepsilon_2(r,\vartheta,P) := \bigg\{\frac{r^{n_1}}{4B\big(H (2B+3) + 2(1+B) + (1-r)^{-1}\big)}\bigg\}^{1/\eta}, \nonumber \\ 
\text{and }\varepsilon_0 \equiv \varepsilon_0(r,\vartheta,P) := \min(\varepsilon_1,\varepsilon_2). \label{def-eps-0-app}
\end{gather}
From (\ref{C01}) there exists a positive integer $k_0 \equiv k_0(r,\vartheta,P)$ such that  
\begin{equation} \label{con-k0-r-var}
 \forall k\geq k_0, \quad \|\Pc_k  - P\|_{0,1} \leq \varepsilon_0. 
\end{equation} 
\begin{apro} \label{cor-bv-app} 
Assume that $P$ is $V$-geometrically ergodic  with rate $\rho$ and that Conditions~\emph{(\ref{C01})} and \emph{(\ref{drift-gene-cond-gene})} hold. 
Let $\vartheta>0$ be such that $\max(\hat\alpha,\rho) < 1 - 2\vartheta$ and $r$ such that $\max(\hat\alpha,\rho)+\vartheta < r <1-\vartheta$. Then, for every $k\geq k_0$, Inequality~\emph{(\ref{Vk-geo-cond})} holds with $\rho_k:=r$. 
\end{apro}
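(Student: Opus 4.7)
The plan is to adapt the proof of Theorem~\ref{pro-bv} by using the \emph{full} statement of \cite[Prop.~3.1]{Liv01} together with the spectral rank-stability \cite[Cor.~3.1]{Liv01}, rather than just the first assertion of Prop.~3.1. The point of $\varepsilon_0 = \min(\varepsilon_1,\varepsilon_2)$ is that the bound $\|\Pc_k - P\|_{0,1}\leq \varepsilon_1$ will deliver the $\cV(r,\vartheta,P)$-inclusion of spectra and the uniform resolvent estimate (exactly as in Theorem~\ref{pro-bv}), while $\|\Pc_k - P\|_{0,1}\leq \varepsilon_2$ will upgrade this to a rank-preservation statement, thereby removing the hypothesis ``$\Pc_k$ is $V$-geometrically ergodic with rate $\rho_k<1-\vartheta$'' that had to be assumed in Theorem~\ref{pro-bv}.

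First I would transfer everything to the adjoint operators $P'$ and ${\Pc_k}'$ on $\cB_1'$ equipped with the auxiliary semi-norm $\|\cdot\|_0$ of \eqref{semi_norm_0}, exactly as in the proof of Theorem~\ref{pro-bv}. The uniform Doeblin--Fortet inequality (Lemma~\ref{D-F-uniform}) together with $\|Q'f'\|_0\le\|f'\|_0$ for $Q\in\{P,\Pc_k\}$ and $\|{\Pc_k}' - P'\|_{1,0}=\|\Pc_k - P\|_{0,1}\le\varepsilon_1$ let me invoke the first assertion of \cite[Prop.~3.1]{Liv01} to conclude
$$\sigma(\Pc_k)\subset\cV(r,\vartheta,P)\quad\text{and}\quad \sup_{z\in\cV(r,\vartheta,P)^c}\|(zI-\Pc_k)^{-1}\|_1\le c(r,\vartheta,P)$$
for every $k\ge k_0$.

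The core new step uses the assumption $\|\Pc_k - P\|_{0,1}\le\varepsilon_2$ for $k\ge k_0$, together with \cite[Cor.~3.1]{Liv01}: the total algebraic multiplicity of the eigenvalues of $\Pc_k$ lying in $\overline{D}(1,\vartheta)$ is equal to the algebraic multiplicity of $1$ as an eigenvalue of $P$, which is $1$ since $P$ is $V$-geometrically ergodic. The existence of the fixed function $\phi_k\ge 0$ (with $\Pc_k\phi_k=\phi_k$, $\widehat\pi_k(\phi_k)=1$) shows that $1$ is indeed an eigenvalue of $\Pc_k$, so it must be the unique eigenvalue of $\Pc_k$ in $\overline{D}(1,\vartheta)$, simple, with associated spectral projector $\widehat\pi_k(\cdot)\phi_k$ (by $\Pc_k$-invariance of $\widehat\pi_k$ and the normalization $\widehat\pi_k(\phi_k)=1$). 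Combined with the hypothesis $\rho+\vartheta<r$, which gives $\cV(r,\vartheta,P)=\overline{D}(0,r)\cup\overline{D}(1,\vartheta)$, we deduce
$$\sigma(\Pc_k)\subset\overline{D}(0,r)\cup\{1\}.$$

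Once this spectral picture is in hand, the conclusion is obtained by the same Dunford integral argument as at the end of the proof of Theorem~\ref{pro-bv}: for any $\kappa\in(r,1-\vartheta)$,
\begin{equation*}
\Pc_k^{\,n}\;=\;\widehat\pi_k(\cdot)\phi_k\;+\;\frac{1}{2i\pi}\oint_{|z|=\kappa}z^n(zI-\Pc_k)^{-1}\,dz,
\end{equation*}
and since $\{|z|=\kappa\}\subset\cV(r,\vartheta,P)^c$ the uniform resolvent bound $c(r,\vartheta,P)$ from the first step yields $\|\Pc_k^{\,n}-\widehat\pi_k(\cdot)\phi_k\|_1\le c(r,\vartheta,P)\,\kappa^{n+1}$; letting $\kappa\downarrow r$ gives \eqref{Vk-geo-cond} with $\rho_k:=r$ and $C_k:=c(r,\vartheta,P)\,r$. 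The main obstacle is the rank-preservation step: one must check carefully that the quasi-compactness of $P$, the uniform Doeblin--Fortet inequality on $\cB_1'$, and $\|\Pc_k - P\|_{0,1}\le\varepsilon_2$ meet the precise hypotheses of \cite[Cor.~3.1]{Liv01}, and then use the non-triviality of $\phi_k$ to identify the perturbed eigenvalue with $1$ rather than with some other point of $\overline{D}(1,\vartheta)$; everything else is a repetition of the arguments already developed for Theorem~\ref{pro-bv}.
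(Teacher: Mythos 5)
Your proposal is correct and follows essentially the paper's own route: pass to the adjoints on $\cB_1'$, use the uniform Doeblin--Fortet inequality and the first part of \cite[Prop.~3.1]{Liv01} (threshold $\varepsilon_1$) to get the spectral inclusion and the uniform resolvent bound $c(r,\vartheta,P)$, use the rank-stability \cite[Cor.~3.1]{Liv01} (threshold $\varepsilon_2$, i.e.\ exactly what the paper packages as Lemma~\ref{pro-bv-app-bis}) to see that the spectral projection over $\overline{D}(1,\vartheta)$ is rank one, and conclude by the Dunford integral over $|z|=\kappa$, $\kappa\in(r,1-\vartheta)$. The only place you diverge is the identification step: you observe that the standing assumptions of the Introduction ($\Pc_k\phi_k=\phi_k$ with $\phi_k\neq0$ bounded, $\widehat\pi_k$ invariant with $\widehat\pi_k(V)<\infty$ and $\widehat\pi_k(\phi_k)=1$) already exhibit $1$ as an eigenvalue of $\Pc_k$ with left eigenform $\widehat\pi_k$, so rank-one-ness forces the unique eigenvalue in $\overline{D}(1,\vartheta)$ to be $1$ and the projector to be $\widehat\pi_k(\cdot)\phi_k$; the paper instead proves $\lambda=1$ by an asymptotic argument using power-boundedness of $\Pc_k$ and identifies the projector via \cite{FerHerLed11} and the Vitali--Hahn--Saks theorem. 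Your shortcut is legitimate under the paper's hypotheses and simpler, at the price of leaning on the assumed existence of $(\widehat\pi_k,\phi_k)$ rather than recovering this structure from the spectral data; as you note, the remaining work is only the (routine) verification that the dual setup meets the hypotheses of \cite[Cor.~3.1]{Liv01}, which is the same verification the paper carries out for Theorem~\ref{pro-bv}.
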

Using duality as in the proof of Theorems~\ref{pro-bv}, Proposition~\ref{cor-bv-app} is based on the next lemma which follows from the rank-stability property of spectral projections \cite[cor.~3.1]{Liv01}. 
\begin{alem}  \label{pro-bv-app-bis} 
Assume that $P$ is quasi-compact on $\cB_1$ and that Conditions~\emph{(\ref{C01})} and \emph{(\ref{drift-gene-cond-gene})} hold. 
Let $r>\hat\alpha$ and $\vartheta>0$. For every $k\geq k_0$ the spectral projections of $P'$ and ${\Pc'}_k$ associated with any connected component of $\cV(r,\vartheta,P)$ (not containing $0$) have the same rank. 
\end{alem}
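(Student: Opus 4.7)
The plan is to mimic the dual reduction already carried out in the proof of Theorem~\ref{pro-bv}, but this time invoke the rank-stability corollary \cite[Cor.~3.1]{Liv01} in place of the resolvent estimate \cite[Prop.~3.1]{Liv01}. Concretely, I work with the adjoint pair $(P',\Pc_k')$ on $\cB_1'$, endowed with the dual norm $\|\cdot\|_1$ and the auxiliary seminorm $\|\cdot\|_0$ defined in (\ref{semi_norm_0}).

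\textbf{Verifying the Keller--Liverani framework.} I need to re-establish on $\cB_1'$ the four standard hypotheses of \cite{Liv01}. First, uniform boundedness on $(\cB_1',\|\cdot\|_1)$: this follows from $\|Q'\|_1=\|Q\|_1$ together with the fact that (\ref{drift-gene-cond-gene}) makes both $P$ and $\Pc_k$ power-bounded on $\cB_1$ with a common bound $A$ (see (\ref{itere-borne})). Second, the auxiliary-seminorm contraction $\|Q'f'\|_0\le\|f'\|_0$ for $Q\in\{P,\Pc_k\}$, proved by the same duality argument as in the proof of Theorem~\ref{pro-bv} using $0\le Q1_\X\le 1_\X$. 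Third, the uniform dual Doeblin--Fortet inequality (\ref{doeb-fortet-dual-itere}) with constants $\hat\alpha$ and $B$, supplied by Lemma~\ref{D-F-uniform} combined with the definition (\ref{def-hat-alpha}) of $\hat\alpha$ and the contraction on $\cB_0$. Fourth, the weak convergence in the mixed norm: $\|\Pc_k'-P'\|_{1,0}=\|\Pc_k-P\|_{0,1}\to 0$ by duality and (\ref{C01}).

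\textbf{Applying rank-stability.} Since $P$ is quasi-compact on $\cB_1$, $P'$ is quasi-compact on $\cB_1'$ with the same essential spectral radius and the same spectrum; in particular $\sigma(P')\subset\C$ is described by $\cV(r,\vartheta,P)=\cV(r,\vartheta,P')$ for every $r>\hat\alpha$ and $\vartheta>0$. Corollary~3.1 of \cite{Liv01}, applied in the present abstract setting, then asserts that there exists an explicit threshold $\varepsilon_0\equiv\varepsilon_0(r,\vartheta,P)$ such that whenever $\|\Pc_k'-P'\|_{1,0}\le\varepsilon_0$, for each connected component $\Gamma$ of $\cV(r,\vartheta,P)$ not containing $0$ the total spectral projection of $\Pc_k'$ on $\Gamma$ has the same (finite) rank as the total spectral projection of $P'$ on $\Gamma$. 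The explicit value of this threshold is precisely the one recorded in (\ref{def-eps-0-app}); by (\ref{con-k0-r-var}) the inequality $\|\Pc_k-P\|_{0,1}\le\varepsilon_0$ holds for all $k\ge k_0$, which therefore yields the announced equality of ranks.

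\textbf{Main obstacle.} Essentially no new analytical ingredient is required beyond those assembled in Section~\ref{sec-V-to-Vk}: the only care is to check that the constant $\varepsilon_0$ prescribed by \cite[Cor.~3.1]{Liv01} in terms of the Doeblin--Fortet constants $(\hat\alpha,B)$, the resolvent bound $H$ on $\cV(r,\vartheta,P)^c$ and the cut-off parameters $(n_1,n_2)$ coincides with the explicit expression in (\ref{def-eps-0-app}). This is a matter of tracking Liverani's constants, but once done, the rank-stability conclusion is obtained by direct citation.
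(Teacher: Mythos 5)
Your proposal follows essentially the same route as the paper: the paper treats this lemma as a direct consequence of the rank-stability result \cite[Cor.~3.1]{Liv01}, applied to the adjoint operators $P'$ and $\Pc_k'$ on $\cB_1'$ within exactly the dual Keller--Liverani framework (auxiliary seminorm (\ref{semi_norm_0}), uniform Doeblin--Fortet inequalities (\ref{doeb-fortet-dual-itere}), mixed-norm convergence by duality, quasi-compactness of $P'$ with $r_{ess}(P')\le\hat\alpha$ and $r_{ess}(\Pc_k')=0$) already assembled in the proof of Theorem~\ref{pro-bv}, with the threshold $\varepsilon_0$ of (\ref{def-eps-0-app}) and the definition (\ref{con-k0-r-var}) of $k_0$ playing the same role as in your argument. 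Your check that Liverani's constants match $\varepsilon_0$ is the only bookkeeping step, and it is consistent with the paper's treatment.
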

\begin{proof}{ of Proposition~\ref{cor-bv-app}}
From the $V$-geometrical ergodicity of $P$, we have $\sigma(P')=\sigma(P)\subset D(0,\rho) \cup \{1\}$. Thus it follows from (\ref{implic1-proof-dual}) that 
$\sigma(\Pc_k')\subset \overline{D}(0,r)\cup \overline{D}(1,\vartheta)$. Next, from $\|{\Pc_k}  - P\|_{0,1} \leq \varepsilon_0$ and Lemma~\ref{pro-bv-app-bis},  we obtain that $\sigma({\Pc_k}\,')\cap \overline{D}(1,\vartheta) = \{\lambda\}$ for some eigenvalue $\lambda\in\C$ of ${\Pc_k}\,'$ and that there exists an associated rank-one projection $\widehat\Pi_{k,\lambda}'$ on $\cB_1'$ such that we have for any $\kappa\in(r,1-\vartheta)$: 
$$\forall n\geq 1,\quad {\Pc_k}^{\;'n} - \lambda^n \widehat\Pi_{k,\lambda}' = \frac{1}{2i\pi} \oint_{|z|=\kappa} z^n\, (zI-{\Pc_k}\,')^{-1}\, dz.$$
It follows from (\ref{implic1-proof-dual}) that 
\begin{equation} \label{rk-spec3}
\forall n\geq 1,\quad \|{\Pc_k}^{\;'n} - \lambda^n \widehat\Pi_{k,\lambda}'\|_1 \leq c(r,\vartheta,P)\, \kappa^{n+1}. 
\end{equation}
Since $\kappa$ is arbitrarily close to $r$, this gives the expected conclusion in Proposition~\ref{cor-bv-app} using duality and the next lemma. 
\end{proof}
\begin{alem}
The eigenvalue $\lambda$ in \emph{(\ref{rk-spec3})} is equal to 1. Moreover there exists a $\Pc_k$-invariant positive measure $\widehat\pi_k$ on $(\X,\cX)$ such that  $\widehat\pi_k(V) < \infty$, 
and the rank-one projection $\widehat\Pi_{k,\lambda}'$ is the adjoint of the following rank-one projection $\widehat\Pi_k$ on $\cB_1$:   
$$\forall f\in\cB_1,\quad \widehat\Pi_k f := \widehat\pi_k(f)\, \phi_k.$$
\end{alem}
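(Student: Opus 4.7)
The plan is to first force $\lambda=1$ from the existence of the explicit $1$-eigenvector $\phi_k$, and then to identify the primal spectral projection of $\Pc_k$ at $1$ with the rank-one operator $f\mapsto\widehat\pi_k(f)\phi_k$, so that its adjoint automatically matches $\widehat\Pi_{k,\lambda}'$.

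First I would observe that $\phi_k$ lies in $\cB_1$ (it is bounded) and is nonzero (since $\widehat\pi_k(\phi_k)=1$), so the relation $\Pc_k\phi_k=\phi_k$ places $1$ in $\sigma(\Pc_k)=\sigma(\Pc_k')$. Since Lemma~\ref{pro-bv-app-bis} has just given $\sigma(\Pc_k')\cap\overline{D}(1,\vartheta)=\{\lambda\}$, the inclusion $1\in\overline{D}(1,\vartheta)$ forces $\lambda=1$.

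Next I would introduce the primal spectral projection
$$\widehat\Pi_k\,:=\,\frac{1}{2i\pi}\oint_{|z-1|=\vartheta}(zI-\Pc_k)^{-1}\,dz,$$
which is legitimate because $\{1\}$ is isolated in $\sigma(\Pc_k)$; taking adjoints under the integral yields $(\widehat\Pi_k)'=\widehat\Pi_{k,1}'=\widehat\Pi_{k,\lambda}'$, hence $\widehat\Pi_k$ has rank one. Its one-dimensional range is necessarily the $1$-eigenspace $\mathbb{C}\phi_k$, so $\widehat\Pi_k f=\mu(f)\phi_k$ for some $\mu\in\cB_1'$ with $\mu(\phi_k)=1$ (from idempotence) and $\mu\Pc_k=\mu$ (from commutation with $\Pc_k$). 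To identify $\mu$ with the introduction's measure $\widehat\pi_k$, I would note that $\widehat\pi_k\in\cB_1'$ (because $\widehat\pi_k(V)<\infty$), is $\Pc_k'$-invariant, and satisfies $\widehat\pi_k(\phi_k)=1$; since the dual $1$-eigenspace has dimension one as well (it sits inside the range of $\widehat\Pi_{k,\lambda}'$), both $\mu$ and $\widehat\pi_k$ lie on that line and share the same normalization, so $\mu=\widehat\pi_k$.

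The delicate step will be transferring the rank-one property from the dual projection $\widehat\Pi_{k,\lambda}'$ to the primal $\widehat\Pi_k$ and closing the identification with the positive measure assumed in the introduction; it hinges on the general principle that a bounded projection shares its rank with its adjoint (apparent from the contour-integral representation) and on the normalization $\widehat\pi_k(\phi_k)=1$ that pins down the unique $\Pc_k'$-invariant element in the one-dimensional eigenspace.
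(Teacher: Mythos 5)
Your argument is correct, but it takes a genuinely different route from the paper's. For $\lambda=1$, the paper argues asymptotically: from $\Pc_k\phi_k=\phi_k$ and (\ref{rk-spec3}) the sequence $\lambda^{-n}f'(\phi_k)$ converges, so either $\lambda=1$ or $|\lambda|>1$, and power-boundedness of $\Pc_k$ (hence of ${\Pc_k}\,'$) rules out $|\lambda|>1$; your observation that $1\in\sigma(\Pc_k)=\sigma({\Pc_k}\,')$ already holds because $\phi_k$ is a nonzero fixed vector in $\cB_1$, combined with $\sigma({\Pc_k}\,')\cap\overline{D}(1,\vartheta)=\{\lambda\}$, is shorter and equally valid. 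For the projection, the paper sets $\widehat\Pi_k f:=\lim_n{\Pc_k}^{\;n}f$, shows as in \cite{FerHerLed11} that $\widehat\Pi_k f=\widehat e_k\,'(f)\phi_k$ with $\widehat e_k\,'\geq 0$, and then uses the Vitali--Hahn--Saks theorem to realize $\widehat e_k\,'$ as a countably additive positive measure identified with the $\widehat\pi_k$ of the Introduction; you instead take the primal spectral projection as a contour integral, transfer the rank-one property by duality, note that its range must be $\C\phi_k$, and pin down the representing functional by uniqueness in the one-dimensional dual $1$-eigenspace together with the normalization $\widehat\pi_k(\phi_k)=1$. Your route is cleaner and avoids the measure-theoretic machinery, but be aware of what each approach buys: the paper's limit/Vitali--Hahn--Saks construction produces the invariant positive measure from the spectral data alone, so it genuinely establishes the existence clause of the lemma, whereas you discharge that clause by citing the standing hypotheses of the Introduction (existence of $\widehat\pi_k$ with $\widehat\pi_k(V)<\infty$ and $\widehat\pi_k(\phi_k)=1$); this is legitimate under the paper's standing assumptions, but would be circular if the lemma were intended to re-derive the measure. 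One routine detail worth spelling out in your identification step: the invariance of the functional $f\mapsto\widehat\pi_k(f)$ under ${\Pc_k}\,'$ must be extended from bounded functions to all of $\cB_1$, which requires a short monotone-convergence argument using $\widehat\pi_k(V)<\infty$ and the drift inequality (\ref{drift-gene-cond-gene}).
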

\begin{proof}{}
Since $|\lambda|>\kappa$, we deduce from ${\Pc_k}\phi_k = \phi_k$ and (\ref{rk-spec3}) that, for any $f'\in\cB_1'$, we have $\lim_n \lambda^{-n} f'(\phi_k) = \lim_n \lambda^{-n} ({\Pc_k}^{\;'n} f')(\phi_k) = (\widehat\Pi_{k,\lambda}'f')(\phi_k)$. Since there exists $f'\in\cB_1'$ such that  $f'(\phi_k)\neq 0$,  $\{\lambda^{-n}\}_{n\in\N}$ converges in $\C$. Thus either $\lambda=1$, or $|\lambda| >1$. Moreover the sequence $\{{\Pc_k}^{\;n}\}_{n\in\N}$ is bounded in $\cL(\cB_1)$ (proceed as in (\ref{itere-borne})). Thus $\{{\Pc_k}^{\;'n}\}_{n\in\N}$ is bounded in $\cL(\cB_1')$, so that Inequality~(\ref{rk-spec3}) implies that $\{\lambda^{n}\}_{n\in\N}$ is bounded in $\C$. Therefore $\lambda = 1$.  

Now we omit $\lambda$ in $\widehat\Pi_{k,\lambda}'$. We can prove as in \cite[proof of Th.~1]{FerHerLed11} that $\widehat\Pi_{k}'$ is the adjoint of the  rank-one projection $\widehat\Pi_k$ defined on $\cB_1$ by 
	$$\forall f\in\cB_1,\quad  \widehat\Pi_k f := \lim_n {\Pc_k}^{\;n}f \ \text{ in}\ \cB_1,$$
and that $\widehat\Pi_k$ is of the form $\widehat\Pi_k f = \widehat e_k\, '(f)\, \phi_k$ ($f\in\cB_1$) for some non-negative element $\widehat e_k\, '\in\cB_1'$ and the function $\phi_k$ of the Introduction. Let $x_0\in\X$ such that $\phi_k(x_0)\neq0$. Then we obtain from the previous facts: $\forall A\in\cX,\ \lim_n {\Pc_k}^{\;n}(x_0,A) = \widehat e_k\, '(1_A)\, \phi_k(x_0)$. Since ${\Pc_k}^{\;n}(x_0,\cdot)$ is a positive measure on $(\X,\cX)$, it follows from the Vitali-Hahn-Saks theorem that, for all $A\in\cX$ $\widehat  e_k\, '(1_{A}) = \widehat\pi_k(1_{A})$, where $\widehat\pi_k$ is the positive measure  on $(\X,\cX)$ of Introduction. Proceeding as in \cite{FerHerLed11}, we can prove that $\widehat\pi_k(V) < \infty$ and that $\widehat  e_k\, '$ coincide with $\widehat\pi_k$ on $\cB_1$. 
\end{proof}
%
%
\subsection{Complements on Theorem~\ref{cor-Vk-to-V}}  \label{sub-th-Vk-to-V}  
%
We use the notations introduced before Theorem~\ref{cor-Vk-to-V}. 
\begin{apro} \label{th-Vk-to-V} 
Assume that $P$ is quasi-compact on $\cB_1$ and that Conditions~\emph{(\ref{C01})} and \emph{(\ref{drift-gene-cond-gene})} hold. Let $\vartheta\in(0,(1-\hat\alpha)/2)$. 
There exists $\tilde k \equiv \tilde k(\vartheta)\in\N^*$ such that, for every $k\in[\tilde k,+\infty)\cap{\cal I}_\vartheta$, the property \emph{(\ref{ineg-p-pk-inv})} holds. 
\end{apro}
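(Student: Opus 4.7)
The plan is to exhibit a uniform positive lower bound $\varepsilon_{*}$ on $\varepsilon_1(k)$ for an admissible choice of $r_k$ and then invoke (\ref{C01}); the inequality $\|P-\Pc_k\|_{0,1}\le\varepsilon_{*}\le\varepsilon_1(k)$ will then follow for all $k$ large enough in ${\cal I}_\vartheta$. So I would focus on uniformly controlling the quantities $H_k$, $r_k$, $n_1(k)$ and $n_2(k)$ entering the definition of $\varepsilon_1(k)$.

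First I would fix $r\in(\hat\alpha+\vartheta,1-\vartheta)$---nonempty since $\vartheta<(1-\hat\alpha)/2$---avoiding the finite set $\sigma(P)\cap\{|z|>\hat\alpha\}$ (quasi-compactness of $P$). As in the proofs of Theorem~\ref{pro-bv} and Proposition~\ref{cor-bv-app}, the first part of \cite[Prop.~3.1]{Liv01} applied to the adjoints $P'$ and ${\Pc_k}\,'$ on $\cB_1'$---using Lemma~\ref{D-F-uniform} for the uniform dual Doeblin--Fortet inequality and (\ref{C01}) for the weak convergence---produces an integer $k_0$ and a constant $c^{*}=c^{*}(r,\vartheta,P)$ such that, for every $k\ge k_0$,
\[
\sigma(\Pc_k)\subset \cV(r,\vartheta,P),\qquad \sup_{z\in \cV(r,\vartheta,P)^c}\|(zI-\Pc_k)^{-1}\|_1\le c^{*}.
\]
The Cauchy-integral argument at the end of the proof of Theorem~\ref{pro-bv} then yields the uniform $V$-geometric ergodicity estimate $\|\Pc_k^{\,n}-\widehat\pi_k(\cdot)\phi_k\|_1\le c^{*}\kappa^{n+1}$ for $\kappa$ just above $\max(r,\rho_k)$, so that (\ref{Vk-geo-cond}) holds for every such $k$ with common rate $\max(r,\rho_k)<1-\vartheta$ and common constant $c^{*}$.

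Next, for $k\in{\cal I}_\vartheta\cap[k_0,+\infty)$, I would pick any admissible $r_k$ staying in a fixed compact subinterval of $(\hat\alpha,1)$, for instance $r_k:=(\max(\hat\alpha,\rho_k)+1)/2$, which is admissible since $\max(\hat\alpha,\rho_k)<1-2\vartheta$. Feeding the uniform rate $\max(r,\rho_k)$ and uniform constant $c^{*}$ into Remark~\ref{sub-rate-H} applied to $\Pc_k$, together with $\widehat\pi_k(V)\le L/(1-\delta)$ from (\ref{maj-pi-V}), gives a uniform bound $H_k\le M$. Consequently the integers $n_1(k)$ and $n_2(k)$ of (\ref{def-n1-n2-k}) are uniformly bounded, whence $\varepsilon_1(k)\ge\varepsilon_{*}>0$; Condition (\ref{C01}) then supplies $\tilde k\ge k_0$ with $\|P-\Pc_k\|_{0,1}\le\varepsilon_{*}$ for all $k\ge\tilde k$, yielding (\ref{ineg-p-pk-inv}).

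The main obstacle is the uniform control of $H_k$: a direct application of Remark~\ref{sub-rate-H} with $(\rho_k,C_k)$ fails because $C_k$ is not a priori uniformly bounded, and the idea above is to substitute the (possibly worsened) rate $\max(r,\rho_k)$ together with the uniform constant $c^{*}$ coming from Keller--Liverani. This requires a small case analysis depending on whether $\rho_k$ is larger or smaller than $r$: when $\rho_k\ge r$ the Cauchy contour $|z|=\kappa$ sits in $\cV(r,\vartheta,P)^c$ and the uniform resolvent bound applies directly; when $\rho_k<r$ one must either accept the slower rate $r$ in the $V$-geometric ergodicity estimate, or control the resolvent separately on the residual annular region $\{r_k<|z|\le r\}\cap\cV(r_k,\vartheta,\Pc_k)^c$ using the finiteness of $\sigma(P)\cap\{|z|>\hat\alpha\}$ and the spectral inclusion above.
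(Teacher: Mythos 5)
The crux of your argument is the claim that the Keller--Liverani resolvent bound on $\cV(r,\vartheta,P)^c$ can be converted, via a Cauchy contour of modulus just above $\max(r,\rho_k)$, into a $V$-geometric ergodicity estimate for $\Pc_k$ with a constant $c^{*}$ independent of $k$, and then into a uniform bound on $H_k$. This step has a genuine gap: in Proposition~\ref{th-Vk-to-V} the kernel $P$ is only assumed quasi-compact, so $\sigma(P)$ may contain eigenvalues $\lambda\neq 1$ of modulus anywhere in $(\hat\alpha,1]$. Membership in $\cV(r,\vartheta,P)^c$ requires not only $|z|>r$ but also $d(z,\sigma(P))>\vartheta$, so your assertion that for $\rho_k\ge r$ ``the Cauchy contour $|z|=\kappa$ sits in $\cV(r,\vartheta,P)^c$'' is false in general, and when such eigenvalues have moduli spread over $(r,1-\vartheta)$ no admissible contour of modulus $<1-\vartheta$ avoids all their $\vartheta$-neighbourhoods. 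The same obstruction reappears at the target quantity itself: for $k\in{\cal I}_\vartheta$ one has $\cV(r_k,\vartheta,\Pc_k)^c=\overline{D}(0,r_k)^c\cap\overline{D}(1,\vartheta)^c$, which contains the $\vartheta$-neighbourhoods of any eigenvalues of $P$ in that annulus; at such points neither (\ref{th-implic1}) nor your substitute pair $(\max(r,\rho_k),c^{*})$ yields a bound independent of $k$, and the only available pair is $(\rho_k,C_k)$ with the non-uniform $C_k$ --- exactly the circularity you yourself point out. Your closing suggestion to ``control the resolvent separately on the residual annular region'' using the spectral inclusion $\sigma(\Pc_k)\subset\cV(r,\vartheta,P)$ cannot work either, because that inclusion is only an upper-semicontinuity statement and says nothing about the resolvent of $\Pc_k$ near points of $\sigma(P)$.

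What is missing is the converse, lower-semicontinuity ingredient: every eigenvalue of $P$ of modulus larger than $\hat\alpha+\vartheta/4$ attracts an eigenvalue of $\Pc_k$ within $\vartheta/4$ for $k$ large. This follows from the rank-stability of spectral projections \cite[Cor.~3.1]{Liv01} (Lemma~\ref{pro-bv-app-bis}), and it is precisely what the paper uses (Lemmas~\ref{vp-p-hatp} and \ref{lem-P-Pk}) to obtain, with $\tilde r:=\hat\alpha+\vartheta/2$, the inclusion $\cV(\tilde r,\vartheta,\Pc_k)^c\subset\cV(\tilde r,\vartheta/4,P)^c$; on the latter set the Keller--Liverani bound at level $(\tilde r,\vartheta/4)$ applies uniformly (Lemma~\ref{h'k}), and since $\cV(r_k,\vartheta,\Pc_k)^c\subset\cV(\tilde r,\vartheta,\Pc_k)^c$ for every $r_k$ satisfying (\ref{cor-cond-r-var-k}), one gets $\sup_k H_k<\infty$ and then $\inf_k\varepsilon_1(k)>0$ exactly as in your final paragraph. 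Note also that your construction fixes one particular $r_k$, whereas the statement is invoked in Theorem~\ref{cor-Vk-to-V} for an arbitrary admissible $r_k$, which the paper's inclusion argument covers at no extra cost. Unless you incorporate such a persistence-of-eigenvalues argument (or restrict to the special case $\sigma(P)\cap\{|z|>\hat\alpha\}=\{1\}$, which is not assumed here), the uniform bound $H_k\le M$ is not established and the proof does not go through.
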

This proposition is inspired from \cite[Lem.~4.2]{Liv01}. Since the proof in  \cite{Liv01} is only sketched and the choice of $r_k$ (involved in (\ref{def-eps-k})) must be carefully examined, the derivation of Proposition~\ref{th-Vk-to-V} is detailed in this subsection. First note that, from Definition~\ref{def-q-c} and quasi-compactness of $P$, all the spectral values of $P$ strictly larger than $\hat\alpha$ are eigenvalues since $\hat\alpha \geq r_{ess}(P)$.  
More precisely, for any $R>\hat\alpha$, the operator $P$ has a finite number of eigenvalues of modulus larger than $R$. The same property holds for every $\Pc_k$ since $\Pc_k$ is of finite rank (thus $r_{ess}(\Pc_k)=0$).  

\begin{alem} \label{vp-p-hatp}
Let $R>\hat\alpha$ and $\theta>0$. Then there exists $\widetilde k_0\equiv \widetilde k_0(R,\theta)\in\N^*$ such that, for any eigenvalue of $P$ satisfying $|\lambda| > R$ and for every $k\geq \widetilde k_0$, the open disk $D(\lambda,\theta)$ contains at least an eigenvalue of $\Pc_k$.  
\end{alem}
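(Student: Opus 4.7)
The plan is to deduce Lemma~\ref{vp-p-hatp} from the spectral rank-stability result already quoted as Lemma~\ref{pro-bv-app-bis}, applied to suitably chosen connected components of $\cV(r,\vartheta,P)$ that isolate each ``large'' eigenvalue of $P$.

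First I would use quasi-compactness of $P$ to list the finitely many eigenvalues $\lambda_1,\ldots,\lambda_m$ of $P$ with $|\lambda_i|>R$. This is legitimate because $R>\hat\alpha\ge r_{\mathrm{ess}}(P)$, so every spectral value of modulus $>R$ is an isolated eigenvalue of finite algebraic multiplicity. Next I would fix a small positive real
\[
\vartheta := \tfrac{1}{2}\min\Bigl(\theta,\, \min_i(|\lambda_i|-R),\, \min_{i\neq j}|\lambda_i-\lambda_j|\Bigr),
\]
so that the closed disks $\overline{D}(\lambda_i,\vartheta)$ are pairwise disjoint, disjoint from $\overline{D}(0,R)$, and contained in $D(\lambda_i,\theta)$. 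Setting $r:=R$, each $\overline{D}(\lambda_i,\vartheta)$ is a connected component of $\cV(r,\vartheta,P)$ that does not contain $0$, and every other connected component of $\cV(r,\vartheta,P)$ is either $\overline{D}(0,r)$ itself or a disk around an eigenvalue of $P$ lying outside the $\lambda_i$'s.

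Now I would apply Lemma~\ref{pro-bv-app-bis}: for every $k\ge k_0(r,\vartheta,P)$, the spectral projections of $P'$ and $\Pc_k\,'$ associated with the connected component $\overline{D}(\lambda_i,\vartheta)$ have the same rank. Since $\lambda_i\in\sigma(P)=\sigma(P')$, the rank of the spectral projection of $P'$ on $\overline{D}(\lambda_i,\vartheta)$ is at least $1$, hence so is that of $\Pc_k\,'$. Consequently $\sigma(\Pc_k\,')\cap\overline{D}(\lambda_i,\vartheta)\neq\emptyset$, and since $\sigma(\Pc_k)=\sigma(\Pc_k\,')$ and $\Pc_k$ is of finite rank, this spectral value is an eigenvalue of $\Pc_k$. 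By the choice of $\vartheta$, this eigenvalue lies in $D(\lambda_i,\theta)$. Setting $\widetilde k_0(R,\theta):=k_0(r,\vartheta,P)$ with the above $\vartheta$ gives the conclusion, uniformly over the finite list $\lambda_1,\ldots,\lambda_m$.

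The only delicate point is the selection of $\vartheta$: one must ensure that each $\lambda_i$ is genuinely isolated inside its own connected component of $\cV(r,\vartheta,P)$, which forces $\vartheta$ to be smaller than both the gap from the central disk and the pairwise distances between the $\lambda_i$'s. Once this geometric separation is secured, Lemma~\ref{pro-bv-app-bis} (itself a consequence of \cite[Cor.~3.1]{Liv01}) is applied component by component and the argument is essentially bookkeeping; no new analytic estimate is required beyond what has already been established for the dual Doeblin--Fortet inequalities and the weak convergence~(\ref{C01}).
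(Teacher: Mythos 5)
Your proof is correct in substance and follows essentially the same route as the paper: both arguments isolate the finitely many eigenvalues of $P$ of modulus $>R$ by small disks and invoke the rank-stability Lemma~\ref{pro-bv-app-bis} (i.e.\ \cite[Cor.~3.1]{Liv01}) to force $\sigma(\Pc_k)$ to meet each disk, the paper merely phrasing this with contour integrals over auxiliary circles $C(\lambda,\theta')$ and the choice $r:=\hat\alpha+\theta$ instead of $r:=R$. The one point to tighten is your choice $\vartheta=\tfrac{1}{2}\min(\cdots)$: with a non-strict half, two closed disks $\overline{D}(\lambda_i,\vartheta)$, $\overline{D}(\lambda_j,\vartheta)$ may touch, and likewise an eigenvalue $\mu$ of $P$ with $|\mu|\le R$ (not listed in your minimum, though it does satisfy $|\mu-\lambda_i|\ge|\lambda_i|-R\ge 2\vartheta$ — an inequality worth recording) may have $\overline{D}(\mu,\vartheta)$ touching $\overline{D}(\lambda_i,\vartheta)$; in such degenerate configurations $\overline{D}(\lambda_i,\vartheta)$ is no longer a connected component of $\cV(R,\vartheta,P)$ (the merged component may even contain $0$), so Lemma~\ref{pro-bv-app-bis} does not apply as stated and the eigenvalue of $\Pc_k$ it produces need not lie in $D(\lambda_i,\theta)$. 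Taking $\vartheta$ strictly smaller than half the minimum (say one third), as the paper's strict inequalities implicitly do, repairs this at no cost.
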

\begin{proof}{}
 Let $\cD_R$ denote the set of the eigenvalues $z$ of $P$ such that $|z| > R$. Define $a:=R - \hat\alpha$, $b:=\min\{|z-z'|,\, z,z'\in\cD_R, z\neq z'\}$ and $c:=\min\{|z|-R,\, z\in\cD_R\}$. Without loss of generality we can suppose that $\theta< \min(a,b/2,c/2)$. Let  $\vartheta\in(0,\theta)$ and $r := \hat\alpha+\theta$. 
Let $\widetilde k_0$ be the smallest integer such that: $\forall k\geq \widetilde k_0,\ \|\Pc_k  - P\|_{0,1} \leq \varepsilon_0$,   with $\varepsilon_0\equiv\varepsilon_0(r,\vartheta,P)$ given in (\ref{def-eps-0-app}). Note that assumptions of Lemma~\ref{pro-bv-app-bis} are satisfied.

Next, consider any $k\geq \widetilde k_0$ and any $\lambda\in\cD_R$. Let $\theta'\in(\vartheta,\theta)$ be such that $(zI-\Pc_k)^{-1}$ is well-defined in $\cL(\cB_1)$ for every $z\in C(\lambda,\theta') := \{z\in\C : |z-\lambda|=\theta'\}$. Such $\theta'$ exists from the quasi-compactness of $\Pc_k$. Note that $(zI-P)^{-1}$ is also well-defined in $\cL(\cB_1)$ for every $z\in C(\lambda,\theta')$, more precisely: $C(\lambda,\theta')\subset \cV(r,\vartheta,P)^c$. Indeed  let $z\in C(\lambda,\theta')$. Then   
$$|z| \geq |\lambda| - \theta' \geq R + c - \theta' > R + 2\theta- \theta' > R + \vartheta,$$
thus $d(z,\overline{D}(0,R)) > \vartheta$. Next, let $z'\in\sigma(P)$ be such that $|z'|>R$. Then $z'\in\cD_R$. If $z'=\lambda$ then $|z'-z|=|\lambda -z|=\theta' > \vartheta$. If $z'\neq \lambda$ then, using the triangle inequality $|z'-\lambda|\le |z'-z| + |z-\lambda|$, we obtain $|z-z'|\geq b-\theta' > b-\theta >\theta >\vartheta$. We have proved that $d(z,\sigma(P)) > \vartheta$. Finally we have $|z| > r$ since $R>r$ (use $r = \hat\alpha + \theta < \hat\alpha +a = \hat\alpha + R - \hat\alpha = R$). Thus $z\in\cV(r,\vartheta,P)^c$.

Now, the spectral projections 
$$\Pi_{\lambda}' := \frac{1}{2i\pi}\oint_{C(\lambda,\theta')} (zI-P')^{-1}\, dz \quad \text{ and } \quad \Pi_{k,\lambda}' := \frac{1}{2i\pi}\oint_{C(\lambda,\theta')}(zI-\Pc_k')^{-1}\, dz$$
have the same rank from $k\geq \widetilde k_0$ and Lemma~\ref{pro-bv-app-bis}. Since $\Pi_{\lambda}'$ has a nonzero rank from $\lambda\in\sigma(P')$, so is $\Pi_{k,\lambda}'$. Thus we have $D(\lambda,\theta')\cap\sigma(\Pc_k)\neq\emptyset$. 
\end{proof}

Now, to prove  Proposition~\ref{th-Vk-to-V}, we consider any $\vartheta\in(0,(1-\hat\alpha)/2)$ and we set $\tilde{r}:= \hat\alpha + \vartheta/2$. 
\begin{alem} \label{lem-P-Pk}
There exists $\widetilde k_0\equiv \widetilde k_0(\tilde{r},\vartheta)\in\N^*$ such that $\forall k\geq \widetilde k_0,\ \cV(\tilde{r},\vartheta/4,P) \subset \cV(\tilde{r},\vartheta,\Pc_k)$. 
\end{alem}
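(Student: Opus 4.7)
\noindent\textbf{Proof proposal for Lemma~\ref{lem-P-Pk}.} The plan is to unpack the definition
$\cV(r,\vartheta,T) = \{z\in\C : |z|\le r \text{ or } d(z,\sigma(T))\le \vartheta\}$
and verify the inclusion pointwise by a simple case split. Any $z\in\cV(\tilde r,\vartheta/4,P)$ either satisfies $|z|\le \tilde r$, in which case $z\in\cV(\tilde r,\vartheta,\Pc_k)$ for every $k$ with no further argument, or else satisfies
$|z|>\tilde r$ and $d(z,\sigma(P))\le \vartheta/4$.
Only the second case requires work, and the whole game is to produce, for all sufficiently large $k$, a nearby point of $\sigma(\Pc_k)$.

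For such a $z$, I pick $\lambda\in\sigma(P)$ with $|z-\lambda|\le \vartheta/4$, so that
$|\lambda| \ge |z|-\vartheta/4 > \tilde r-\vartheta/4 = \hat\alpha+\vartheta/4.$
Since $P$ is quasi-compact on $\cB_1$ with essential spectral radius at most $\hat\alpha$, every spectral value of modulus $>\hat\alpha$ is an isolated eigenvalue of finite algebraic multiplicity; hence $\lambda$ is in fact an eigenvalue of $P$ with $|\lambda|>R$, where $R:=\hat\alpha+\vartheta/4>\hat\alpha$. I now invoke Lemma~\ref{vp-p-hatp} with this $R$ and with $\theta:=3\vartheta/4$, obtaining an integer $\widetilde k_0\equiv\widetilde k_0(R,\theta)$ depending only on $(\tilde r,\vartheta)$ such that, for every $k\ge\widetilde k_0$ and every eigenvalue $\lambda'$ of $P$ with $|\lambda'|>R$, the disk $D(\lambda',3\vartheta/4)$ meets $\sigma(\Pc_k)$. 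Applied to our $\lambda$, this yields $\mu\in\sigma(\Pc_k)$ with $|\lambda-\mu|<3\vartheta/4$, and the triangle inequality gives
$|z-\mu| \le |z-\lambda|+|\lambda-\mu| < \vartheta/4 + 3\vartheta/4 = \vartheta,$
so $d(z,\sigma(\Pc_k))\le \vartheta$ and therefore $z\in\cV(\tilde r,\vartheta,\Pc_k)$.

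The substantive obstacle is entirely shouldered by Lemma~\ref{vp-p-hatp}, which provides the uniform matching of eigenvalues of $P$ and $\Pc_k$ in the annulus $\{|z|>R\}$ via rank-stability of spectral projections. Once that matching is available, the present lemma is just a triangle-inequality argument, the point being that the gap $\tilde r-\hat\alpha = \vartheta/2$ together with the $\vartheta/4$ bound on $d(z,\sigma(P))$ leaves enough room (precisely $3\vartheta/4$) for the perturbed eigenvalue to land inside the prescribed $\vartheta$-neighborhood.
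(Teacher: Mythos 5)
Your proof is correct and follows essentially the same route as the paper: the same case split on $|z|\le\tilde r$ versus $d(z,\sigma(P))\le\vartheta/4$, the same lower bound $|\lambda|>\hat\alpha+\vartheta/4$, and the same appeal to Lemma~\ref{vp-p-hatp} followed by the triangle inequality (the paper takes $\theta:=\vartheta/4$ there, giving $|u-\lambda_k|\le\vartheta/2$, while your choice $\theta:=3\vartheta/4$ gives $<\vartheta$; both suffice). Your explicit remark that quasi-compactness makes $\lambda$ an eigenvalue, which the paper handles in the discussion preceding Lemma~\ref{vp-p-hatp}, is a welcome but inessential addition.
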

\begin{proof}{}
Let $u\in \cV(\tilde{r},\vartheta/4,P)$. Thus $|u|\leq \tilde{r}$ or $d(u,\sigma(P))\leq \vartheta/4$. If $|u|\leq \tilde{r}$, then $u\in \cV(\tilde{r},\vartheta,\Pc_k)$. Now assume that $|u|>\tilde{r}$ and $d(u,\sigma(P))\leq \vartheta/4$. Since $\sigma(P)$ is compact, there exists $\lambda\in\sigma(P)$ such that $|u-\lambda|\leq \vartheta/4$. We have 
$|\lambda| > \hat\alpha + \vartheta/4$ from  
$$|\lambda| \geq |u| - \frac{\vartheta}{4} > \tilde{r} - \frac{\vartheta}{4} = \hat\alpha + \frac{\vartheta}{4}.$$
Then it follows from Lemma~\ref{vp-p-hatp}  with $R:=\hat\alpha + \vartheta/4$ and $\theta:=\vartheta/4$ that there exists $\widetilde k_0\equiv \widetilde k_0(R,\theta)\in\N^*$ such that, for every $k\geq \widetilde k_0$, the disk $D(\lambda,\vartheta/4)$ contains an eigenvalue of $\Pc_k$, say $\lambda_k$. We obtain  $d(u,\sigma(\Pc_k)) \leq \vartheta$ since 
$|u - \lambda_k| \leq |u - \lambda| + |\lambda - \lambda_k| \leq \vartheta/2$. Thus $u\in \cV(\tilde{r},\vartheta,\Pc_k)$. 
\end{proof}

From the definition of $\cV(\tilde{r},\vartheta,\Pc_k)$, we have: $z\in\cV(\tilde{r},\vartheta,\Pc_k)^c\, \Rightarrow\, d(z,\sigma(\Pc_k)) > \vartheta$. Thus, the following constant is well-defined for every $k\geq 1$: 
\begin{equation*} 
\widetilde{H}_k := \sup_{z\in \cV(\tilde{r},\vartheta,\Pc_k)^c} \|(zI-\Pc_k)^{-1}\|_1. 
\end{equation*}
\begin{alem} \label{h'k}
The sequence $\{\widetilde{H}_k\}_{k\geq 1}$ is bounded. 
\end{alem}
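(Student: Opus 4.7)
{ (plan)}
The plan is to combine Lemma~\ref{lem-P-Pk} with the Keller--Liverani resolvent estimate that was already the workhorse of Theorem~\ref{pro-bv}, and then handle the finitely many exceptional indices separately.

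First, fix the auxiliary pair $(\tilde r,\vartheta/4)$ and set $\widetilde c := c(\tilde r,\vartheta/4,P)$ and $\widetilde\varepsilon_1 := \varepsilon_1(\tilde r,\vartheta/4,P)$ as defined in (\ref{def-eps-1}), which are well-defined finite numbers because $P$ is quasi-compact with $r_{ess}(P)\le\hat\alpha<\tilde r$ and $\tilde r+\vartheta/4<1$ for $\vartheta$ small. Repeating verbatim the dual Keller--Liverani argument used in the first step of the proof of Theorem~\ref{pro-bv} (via Lemma~\ref{D-F-uniform} and \cite[Prop.~3.1]{Liv01}), I get that whenever $\|\Pc_k-P\|_{0,1}\le\widetilde\varepsilon_1$, the resolvent of $\Pc_k$ is uniformly controlled:
\begin{equation*}
\sup_{z\in\cV(\tilde r,\vartheta/4,P)^c}\|(zI-\Pc_k)^{-1}\|_1 \le \widetilde c.
\end{equation*}
By Condition~(\ref{C01}) there exists $k_1\in\N^*$ such that $\|\Pc_k-P\|_{0,1}\le\widetilde\varepsilon_1$ for every $k\ge k_1$.

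Next, apply Lemma~\ref{lem-P-Pk} with the same $\tilde r$ and $\vartheta$: there exists $\widetilde k_0$ such that for every $k\ge\widetilde k_0$,
$\cV(\tilde r,\vartheta/4,P)\subset\cV(\tilde r,\vartheta,\Pc_k)$, and therefore $\cV(\tilde r,\vartheta,\Pc_k)^c\subset\cV(\tilde r,\vartheta/4,P)^c$. Putting the two displays together, for every $k\ge\max(k_1,\widetilde k_0)$,
\begin{equation*}
\widetilde H_k = \sup_{z\in\cV(\tilde r,\vartheta,\Pc_k)^c}\|(zI-\Pc_k)^{-1}\|_1 \le \sup_{z\in\cV(\tilde r,\vartheta/4,P)^c}\|(zI-\Pc_k)^{-1}\|_1 \le \widetilde c.
\end{equation*}

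Finally, for each of the finitely many remaining indices $k<\max(k_1,\widetilde k_0)$, the quantity $\widetilde H_k$ is finite: since $\Pc_k$ is of finite rank its spectrum is a finite set, so the resolvent is continuous on the closed set $\cV(\tilde r,\vartheta,\Pc_k)^c$, stays at distance $\ge\vartheta$ from $\sigma(\Pc_k)$, and tends to $0$ as $|z|\to\infty$ (because $\|(zI-\Pc_k)^{-1}\|_1\le(|z|-\|\Pc_k\|_1)^{-1}$ for $|z|>\|\Pc_k\|_1$), so the supremum defining $\widetilde H_k$ is attained on a compact set and is finite. Taking the maximum of these finitely many values with $\widetilde c$ yields a uniform upper bound on the whole sequence $\{\widetilde H_k\}_{k\ge 1}$. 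The main obstacle is really the first step: one must be careful that the Keller--Liverani bound of \cite[Prop.~3.1]{Liv01}, which is stated for the unperturbed/perturbed pair, is correctly transferred by duality to the adjoint operators on $\cB_1'$, exactly as in the proof of Theorem~\ref{pro-bv}.
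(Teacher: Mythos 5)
Your proof is correct and follows essentially the same route as the paper: choose $k_1$ from (\ref{C01}) so that $\|\Pc_k-P\|_{0,1}\leq\varepsilon_1(\tilde r,\vartheta/4,P)$, invoke the Keller--Liverani resolvent bound (\ref{th-implic1}) with $(\tilde r,\vartheta/4)$, and combine it with the inclusion of Lemma~\ref{lem-P-Pk} to bound $\widetilde H_k$ by $c(\tilde r,\vartheta/4,P)$ for all large $k$. Your explicit treatment of the finitely many remaining indices simply spells out the finiteness of each $\widetilde H_k$ that the paper asserts just before the lemma, so there is no substantive difference.
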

\begin{proof}{} 
Let $\varepsilon_1(\tilde{r},\vartheta/4,P)$ be defined as in (\ref{def-eps-1}). From (\ref{C01}) there exists $k_1\equiv k_1(\tilde{r},\vartheta)\in\N^*$ such that 
$$\forall k\geq k_1,\quad \|\Pc_k  - P\|_{0,1} \leq \varepsilon_1(\tilde{r},\vartheta/4,P).$$
It follows  from Lemma~\ref{lem-P-Pk} that 
$$\forall k\geq \widetilde k_0,\quad \cV(\tilde{r},\vartheta,\Pc_k)^c \subset \cV(\tilde{r},\vartheta/4,P)^c$$
so that, for every $k\geq  \max(\widetilde k_0,k_1)$ we have from (\ref{th-implic1}) with  $r:=\tilde{r}$ and $\vartheta/4$ in place of $\vartheta$ (note that, from \cite{KelLiv99}, Property~(\ref{th-implic1}) holds for any $r > \hat\alpha$ and $\vartheta>0$ under Conditions~(\ref{C01}), (\ref{drift-gene-cond-gene}) and $r_{ess}(P) < 1$): 
$$\sup_{z\in \cV(\tilde{r},\vartheta,\Pc_k)^c} \|(zI- \Pc_k)^{-1}\|_1 \leq \sup_{z\in \cV(\tilde{r},\vartheta/4,P)^c} \|(zI- \Pc_k)^{-1}\|_1 \ \leq\  c(\tilde{r},\vartheta/4,P)< \infty.$$
This gives the expected assertion. 
\end{proof}
\begin{proof}{ of Proposition~\ref{th-Vk-to-V}} Let $k\in {\cal I}_{\vartheta}$
and $H_k \equiv H(r_k,\vartheta,\Pc_k)$ be defined by (\ref{Itheta}) and (\ref{H-r-vartheta-stat}) respectively. Then $H_k \leq \widetilde{H}_k$. Indeed we have  $\cV(r_k,\vartheta,\Pc_k)^c \subset \cV(\tilde{r},\vartheta,\Pc_k)^c$ since (use (\ref{cor-cond-r-var-k}))
$$\tilde{r}=\hat\alpha + \frac{\vartheta}{2} \leq \max(\hat\alpha,\rho_k) + \vartheta < r_k.$$
It follows from Lemma~\ref{h'k} that $\{H_k\}_{k\geq 1}$ is bounded. 
Then, the sequences $\{n_1(k)\}_{k\ge 1}$  and $\{n_2(k)\}_{k\ge 1}$ given in (\ref{def-n1-n2-k}) are bounded since $\hat\alpha + \vartheta < r_k < 1-\vartheta$. Therefore, the sequence $\{\varepsilon_1(k)\}_{k\ge 1}$  in (\ref{def-eps-k}) is uniformly bounded away from zero, that is $\alpha_1 := \inf_{k\geq 1} \varepsilon_1(k) > 0$. 
Finally, from (\ref{C01}) there exists $\tilde{k}\in\N$ such that: $\forall k\geq \tilde{k},\ \|\Pc_k - P\|_{0,1} \leq \alpha_1$. Thus, for every $k\geq \tilde{k},\ \|\Pc_k - P\|_{0,1} \leq \varepsilon_1(k)$. 
\end{proof} 
\section{Conclusion} 
In all the cases where the probabilistic works cited in Introduction do not provide a satisfactory rate $\rho$ in (\ref{V-geo-cond}), Theorem~\ref{cor-Vk-to-V} can be applied to obtain a new rate $r_k$ and constant $c_k$ in (\ref{V-geo-cond}) derived from $(\rho_k,C_k)$ in (\ref{Vk-geo-cond}) for some $k$ large enough (see (\ref{unif-bound-rate-bis})). This new pair $(r_k,c_k)$ will be all the more interesting that 
\leftmargini 1.5em
\begin{enumerate}
	\item the pair $(\rho_k,C_k)$ in (\ref{Vk-geo-cond}) is precise; 
	\item the bound on the essential spectral radius $r_{ess}(P)$  is accurate. 
\end{enumerate}
The  estimate $\|\widehat\pi_n - \pi \|_{TV} = O(\Delta_n|\ln \Delta_n|)$ provided by (\ref{th-TV-ineg-bis}) then involves a computable constant depending on $(r_k,c_k)$. As mentioned in Remark~\ref{effectiv-var-total} and illustrated in Section~\ref{sec-appli}, the direct Inequality~(\ref{rem-spec-n-k-bis}) provides the best estimate of the total variation distance between $\pi$ and $\widehat\pi_k$ when the $C_k$'s are accurately computed.

The point~1.~above is of computational nature (see Section~\ref{sec-appli} for discrete $\X$). The point~2.~is addressed in Theorem~\ref{main} and Proposition~\ref{pro-qc-bis}. Note that $r_{ess}(P)$  is less than the contractive coefficient $\delta$ of (\ref{drift-gene-cond-gene-P}) in  Proposition~\ref{pro-qc-bis} and in atomic case (see Remark~\ref{rk-atom}). It would be of interest to know whether inequality $r_{ess}(P)\leq\delta$ extends to other situations, and more generally whether the general bound in Theorem~\ref{main} can be improved. 

\paragraph*{Acknowledgment} 
We wish to thank the referee for her/his useful advices and encouragements which allowed us to improve our article. We also thank Bernard Delyon for stimulating discussions concerning the results of Section~\ref{sec-mino}.

\bibliographystyle{plain}

\begin{thebibliography}{}

\end{thebibliography}


\begin{thebibliography}{10}

\bibitem{Nag86}
W.~Arendt, A.~Grabosch, G.~Greiner, U.~Groh, H.~P. Lotz, U.~Moustakas,
  R.~Nagel, F.~Neubrander, and U.~Schlotterbeck.
\newblock {\em One-parameter semigroups of positive operators}, volume 1184 of
  {\em Lecture Notes in Math.}
\newblock Springer-Verlag, Berlin, 1986.

\bibitem{Asm03}
S.~Asmussen.
\newblock {\em Applied probability and queues}.
\newblock Springer-Verlag, New York, 2003.

\bibitem{Bah06}
W.~Bahsoun.
\newblock Rigorous numerical approximation of escape rates.
\newblock {\em Nonlinearity}, 19(11):2529--2542, 2006.

\bibitem{BahBos10}
W.~Bahsoun and C.~Bose.
\newblock Quasi-invariant measures, escape rates and the effect of the hole.
\newblock {\em Discrete Contin. Dyn. Syst.}, 27(3):1107--1121, 2010.

\bibitem{BahBos11}
W.~Bahsoun and C.~Bose.
\newblock Invariant densities and escape rates: rigorous and computable
  approximations in the {$L^\infty$}-norm.
\newblock {\em Nonlinear Anal.}, 74(13):4481--4495, 2011.

\bibitem{Bal00}
V.~Baladi.
\newblock {\em Positive transfer operators and decay of correlations}.
\newblock World Scientific Publishing Co. Inc., River Edge, NJ, 2000.

\bibitem{Bax05}
P.~H. Baxendale.
\newblock Renewal theory and computable convergence rates for geometrically
  ergodic {M}arkov chains.
\newblock {\em Ann. Appl. Probab.}, 15(1B):700--738, 2005.

\bibitem{Del97}
D.~Delyon.
\newblock Geometric convergence of {M}arkov chains through a local {D}oeblin
  condition.
\newblock Technical Report 1115, IRISA Rennes, 1997.

\bibitem{DemLiv08}
M.~F. Demers and C.~Liverani.
\newblock Stability of statistical properties in two-dimensional piecewise
  hyperbolic maps.
\newblock {\em Trans. Amer. Math. Soc.}, 360(9):4777--4814, 2008.

\bibitem{FerHerLed11}
D.~Ferr\'e, L.~Herv\'e, and J.~Ledoux.
\newblock Regular perturbation of ${V}$-geometrically ergodic {M}arkov chains.
\newblock {\em J. Appl. Probab.}, 50:184--194, 2013.

\bibitem{GuiHerLed11}
D.~Guibourg, L.~Herv\'e, and J.~Ledoux.
\newblock Quasi-compactness of {M}arkov kernels on weighted-supremum spaces and
  geometrical ergodicity.
\newblock arXiv : 1110.3240, 2011.

\bibitem{Hen93}
H.~Hennion.
\newblock Sur un th\'eor\`eme spectral et son application aux noyaux
  lipchitziens.
\newblock {\em Proc. Amer. Math. Soc.}, 118:627--634, 1993.

\bibitem{Hen06}
H.~Hennion.
\newblock {Quasi-compactness and absolutely continuous kernels, applications to
  Markov chains}.
\newblock arXiv:math/0606680, June 2006.

\bibitem{Hen07}
H.~Hennion.
\newblock Quasi-compactness and absolutely continuous kernels.
\newblock {\em Probab. Theory Related Fields}, 139:451--471, 2007.


\bibitem{GuiHerLed12}
L.~Herv\'e and J.~Ledoux.
\newblock Spectral analysis of {M}arkov kernels and application to the
  convergence rate of discrete random walks.
\newblock Adv. in Applied Probab., 46(4), 2014.

\bibitem{KelLiv99}
G.~Keller and C.~Liverani.
\newblock Stability of the spectrum for transfer operators.
\newblock {\em Annali della Scuola Normale Superiore di Pisa - Classe di
  Scienze S\'er. 4}, 28:141--152, 1999.

\bibitem{KonMey03}
I.~Kontoyiannis and S.~P. Meyn.
\newblock Spectral theory and limit theorems for geometrically ergodic {M}arkov
  processes.
\newblock {\em Ann. Appl. Probab.}, 13(1):304--362, 2003.

\bibitem{KonMey05}
I.~Kontoyiannis and S.~P. Meyn.
\newblock Large deviations asymptotics and the spectral theory of
  multiplicatively regular {M}arkov processes.
\newblock {\em Electron. J. Probab.}, 10:61--123, 2005.

\bibitem{Kre85}
U.~Krengel.
\newblock {\em Ergodic Theorems}.
\newblock de Gruyter, Berlin-New York, 1985.

\bibitem{Liu10}
Y.~Liu.
\newblock Augmented truncation approximations of discrete-time {M}arkov chains.
\newblock {\em Oper. Res. Lett.}, 38(3):218--222, 2010.

\bibitem{Liv01}
C.~Liverani.
\newblock Rigorous numerical investigation of the statistical properties of
  piecewise expanding maps. {A} feasibility study.
\newblock {\em Nonlinearity}, 14(3):463--490, 2001.

\bibitem{LunTwe96}
R.~B. Lund and R.~L. Tweedie.
\newblock Geometric convergence rates for stochastically ordered {M}arkov
  chains.
\newblock {\em Math. Oper. Res.}, 21(1):182--194, 1996.

\bibitem{MeyTwe93}
S.~P. Meyn and R.~L. Tweedie.
\newblock {\em Markov chains and stochastic stability}.
\newblock Springer-Verlag London Ltd., London, 1993.

\bibitem{MeyTwe94}
S.~P. Meyn and R.~L. Tweedie.
\newblock Computable bounds for geometric convergence rates of {M}arkov chains.
\newblock {\em Ann. Probab.}, 4:981--1011, 1994.

\bibitem{Mit05}
A.~Y. Mitrophanov.
\newblock Sensitivity and convergence of uniformly ergodic {M}arkov chains.
\newblock {\em J. Appl. Probab.}, 42(4):1003--1014, 2005.

\bibitem{Nev64}
J.~Neveu.
\newblock {\em Bases math\'ematiques du calcul des probabilit\'es}.
\newblock Masson, Paris, 1964.

\bibitem{Num84}
E.~Nummelin.
\newblock {\em General irreducible {M}arkov chains and nonnegative operators}.
\newblock Cambridge University Press, Cambridge, 1984.

\bibitem{RobTwe99}
G.~O. Roberts and R.~L. Tweedie.
\newblock Bounds on regeneration times and convergence rates for {M}arkov
  chains.
\newblock {\em Stochastic Process. Appl.}, 80(2):211--229, 1999.

\bibitem{Ros96}
J.~S. Rosenthal.
\newblock Markov chain convergence: from finite to infinite.
\newblock {\em Stochastic Process. Appl.}, 62(1):55--72, 1996.

\bibitem{Sch71}
H.~H. Schaefer.
\newblock {\em Topological vector spaces}.
\newblock Springer, New York, 1971.

\bibitem{Sen81}
E.~Seneta.
\newblock {\em Nonnegative matrices and {M}arkov chains}.
\newblock Springer, New York, 1981.

\bibitem{ShaStu00}
T.~Shardlow and A.~M. Stuart.
\newblock A perturbation theory for ergodic {M}arkov chains and application to
  numerical approximations.
\newblock {\em SIAM J. Numer. Anal.}, 37:1120--1137, 2000.

\bibitem{Twe98}
R.~L. Tweedie.
\newblock Truncation approximations of invariant measures for {M}arkov chains.
\newblock {\em J. Appl. Probab.}, 35(3):517--536, 1998.

\bibitem{Wu04}
L.~Wu.
\newblock Essential spectral radius for {M}arkov semigroups. {I}. {D}iscrete
  time case.
\newblock {\em Probab. Theory Related Fields}, 128(2):255--321, 2004.

\bibitem{ZhaLiu96}
Y.~Q. Zhao and D.~Liu.
\newblock The censored {M}arkov chain and the best augmentation.
\newblock {\em J. Appl. Probab.}, 33(3):623--629, 1996.

\bibitem{Zhe10}
Y.~Zheng.
\newblock Stochastic perturbations of fat solenoidal attractors.
\newblock {\em Acta Math. Sin.}, 26(5):923--936, 2010.

\end{thebibliography}

\end{document}